\newtheorem{thm}{Theorem}[section]
 \newtheorem{prop}[thm]{Proposition}
 \newtheorem{lem}[thm]{Lemma}
\newcommand{\R}{\mathbb{R}}
\newcommand{\N}{\mathbb{N}}
\newcommand{\ra}{\rightarrow}
\newcommand{\vp}{\varphi}
\newcommand{\e}{\varepsilon}
\newcommand{\om}{\omega}
\begin{document}

\title{Sparse and Switching Infinite Horizon Optimal Controls with Mixed-Norm Penalizations}
\author{
Dante Kalise\thanks{School of Mathematical Sciences, University of Nottingham, University Park, Nottingham NG7 2QL, United Kingdom. Email: dante.kalise@nottingham.ac.uk . Research supported by the Imperial College London Research Fellowship program.} \and
Karl Kunisch\thanks{Institute for Mathematics and Scientific Computing, University of Graz, Heinrichstra{\ss}e 36, A-8010 Graz, Austria, and Radon Institute of Computational and Applied Mathematics, Austrian Academy of Sciences. Email: karl.kunisch@uni-graz.at. Research in part supported by the ERC advanced grant 668998 (OCLOC) under the EU's H2020 research program. } \and
Zhiping Rao\thanks{School of Mathematics and Statistics, and Hubei Key Laboratory of Computational Sciences, Wuhan University, Wuhan, 430072, Hubei, People's Republic of China. Email: zhiping.rao@whu.edu.cn. Research in part supported by NSFC 11801422.}
}
\date{}

\maketitle

\begin{abstract} A class of infinite horizon optimal control problems involving mixed quasi-norms of
 $L^p$-type cost functionals for the controls is discussed.  These functionals enhance sparsity and switching properties of the optimal controls.  The existence of optimal controls and their structural properties are analyzed on the basis of first order optimality conditions.   A dynamic programming approach is used for numerical realization.
\end{abstract}

{\em Keywords:} { Optimal control, infinite horizon control, sparse controls, switching controls,  optimality
conditions, dynamic programming}.
\medskip

{\em AMS Classification: }{93D15, 93B52, 93C05, 93C20}

\section{Introduction}

 In this work we continue our  investigations of infinite horizon optimal control problems with nonconvex cost functionals which we started in \cite{KKR17}. We focus on optimal control of nonlinear  dynamical systems which are affine in the control. The input control is a vector-valued function $u=(u_1,\ldots,u_m)$ in the space $L^\infty(0,\infty;\R^m)$ under control constraints. The focus rests on that  part of the cost functional which involves  the control. It is given as follows:
\begin{equation}\label{CFpq}
\int^\infty_0 \left(\sum^m_{i=1}|u_i(t)|^p\right)^{q/p}dt,
\end{equation}
where $0<p<1$ and $p\leq q \leq 1$. This functional is nonsmooth and nonconvex, leading to a challenging optimal control problem with interesting properties for the  optimal control laws, in particular sparsity and switching. It appears that the terminology "sparse" is not rigorously defined in the literature, but generally it is used to describe the property of the optimal control to be identically  zero over nontrivial subsets of the temporal domain.  Here, by sparsity we refer to the situation in which the whole vector $u(t)$ is zero.  Switching control, is related to coordinate-wise sparsity, and is used to  describe  the property
\[
u_i(t)u_j(t)=0 \ \text{for }\ i,j\in\{1,\ldots,m\},\ i\neq j,\ t\geq 0,
\]
which is equivalent to saying that at most one coordinate of $u(t)$ is non-zero at $t$.
While the use of the control penalty  \eqref{CFpq} does not guarantee sparsity nor switching properties, it enhances them. This is illustrated in Figure \ref{uballs}, where unit balls for different $q/p$ ratios are shown. For a fixed $q$ decreasing $p$ (column-wise in the sub-figure) one direction becomes dominant over the other.

\begin{figure}[!h]
\includegraphics[width=0.32\textwidth]{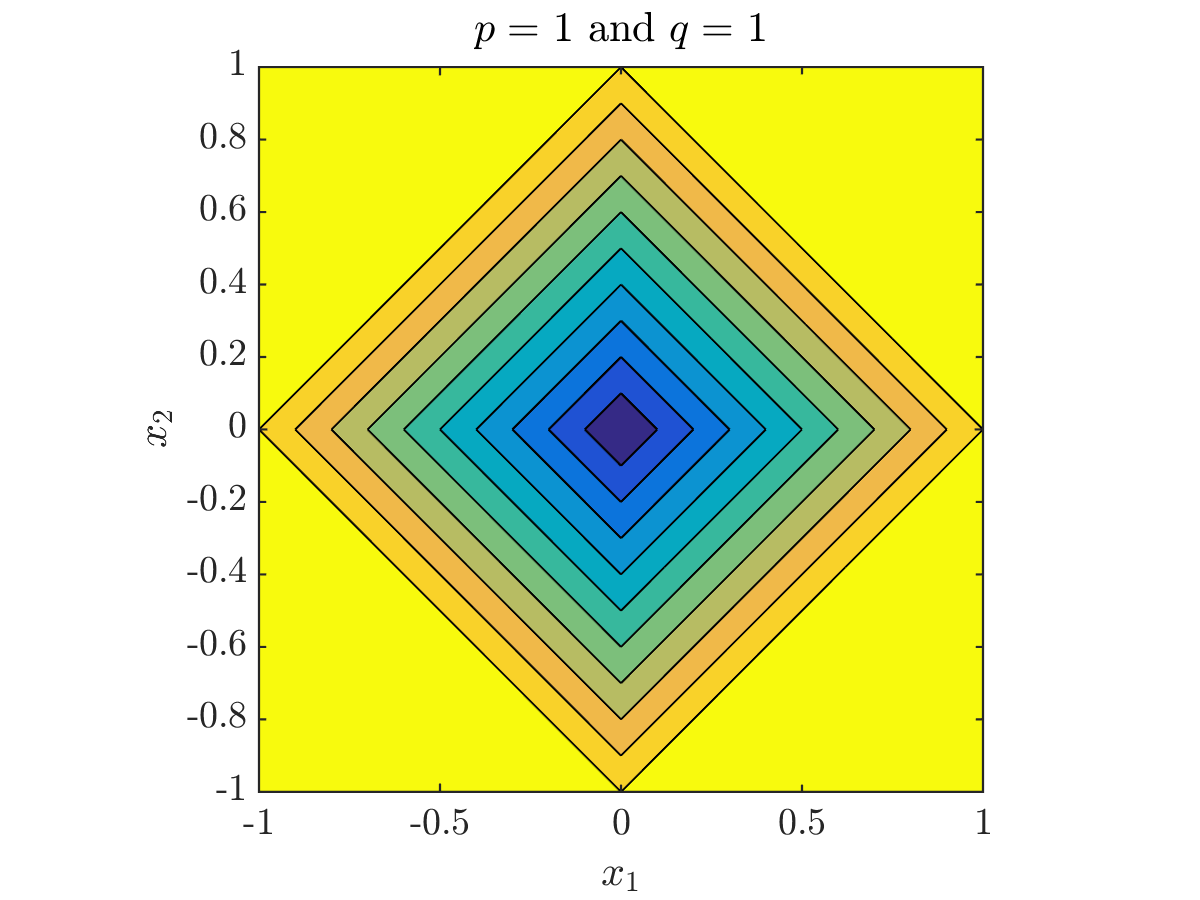}\hfill
\includegraphics[width=0.32\textwidth]{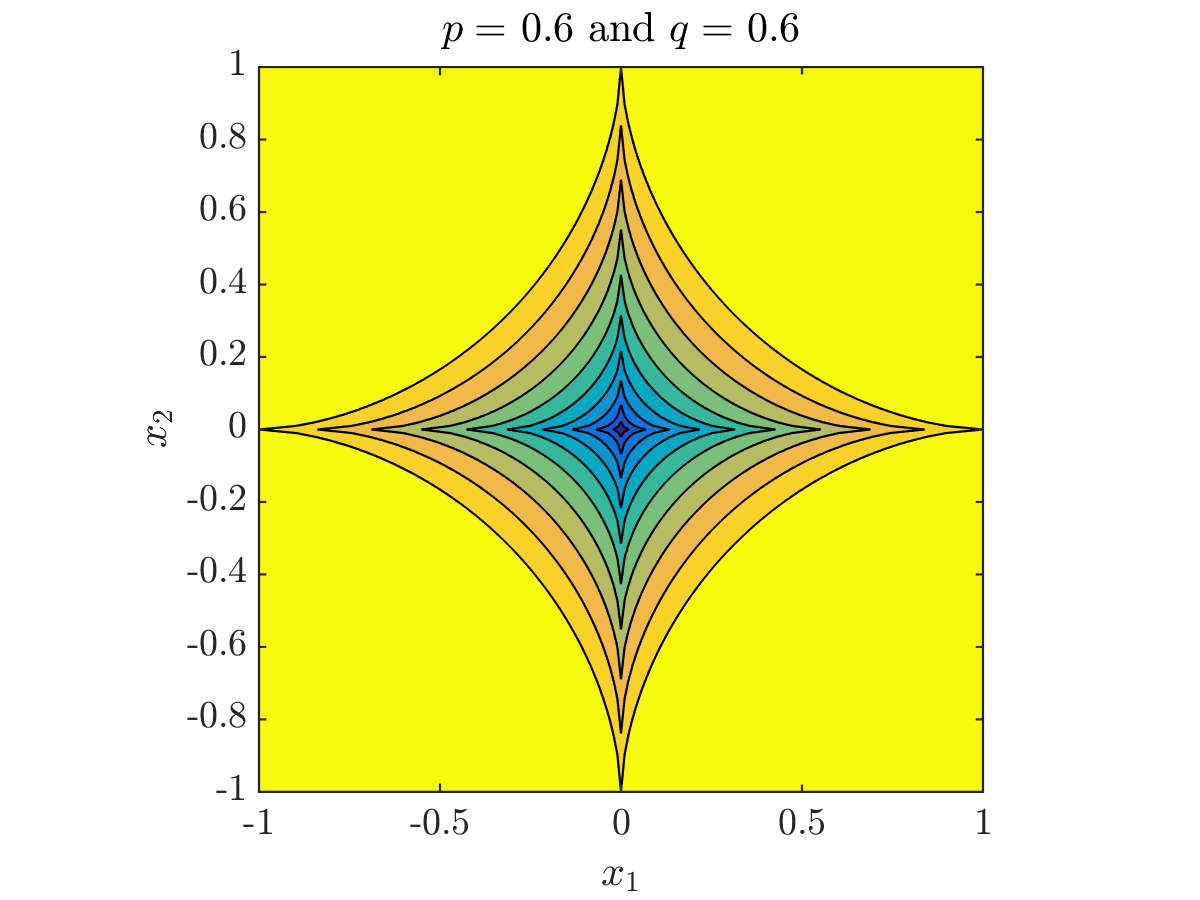}\hfill
\includegraphics[width=0.32\textwidth]{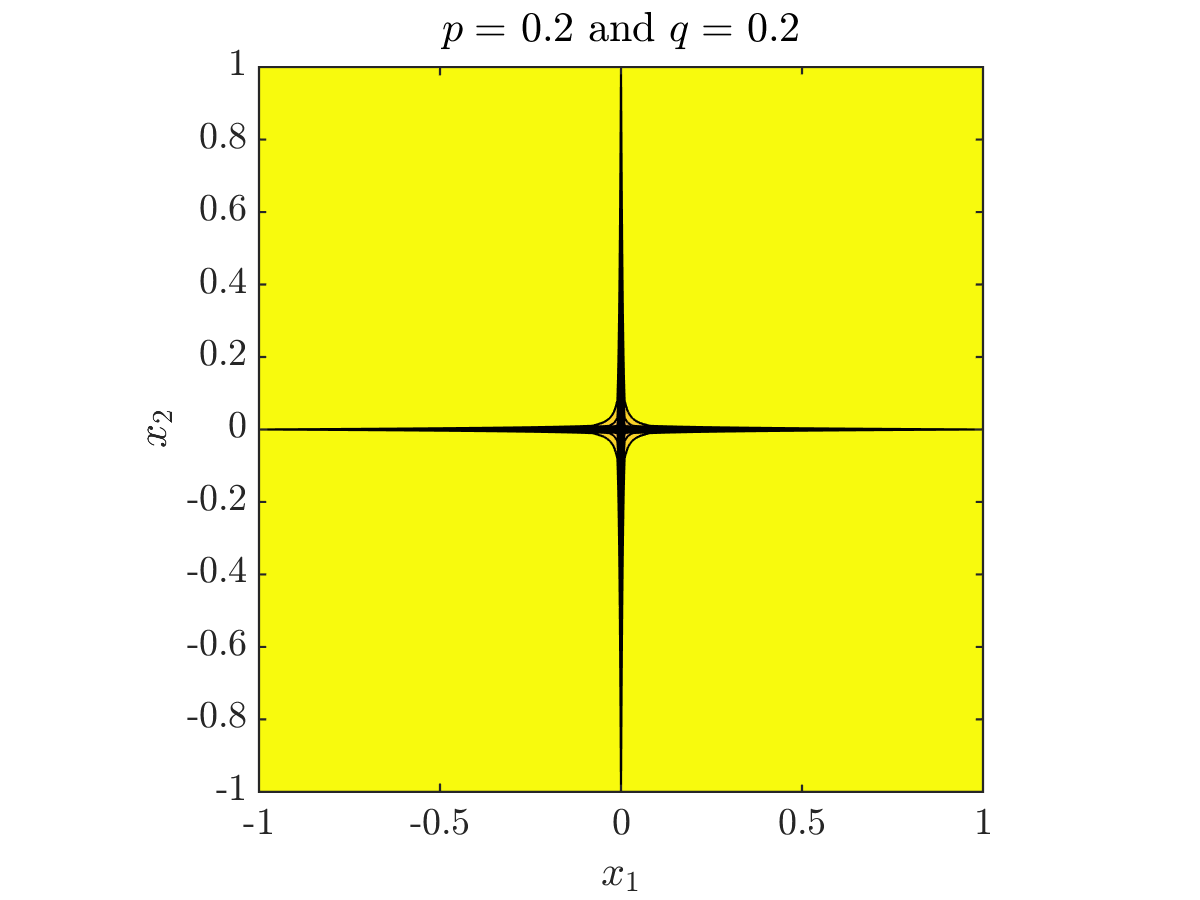}\\
\includegraphics[width=0.32\textwidth]{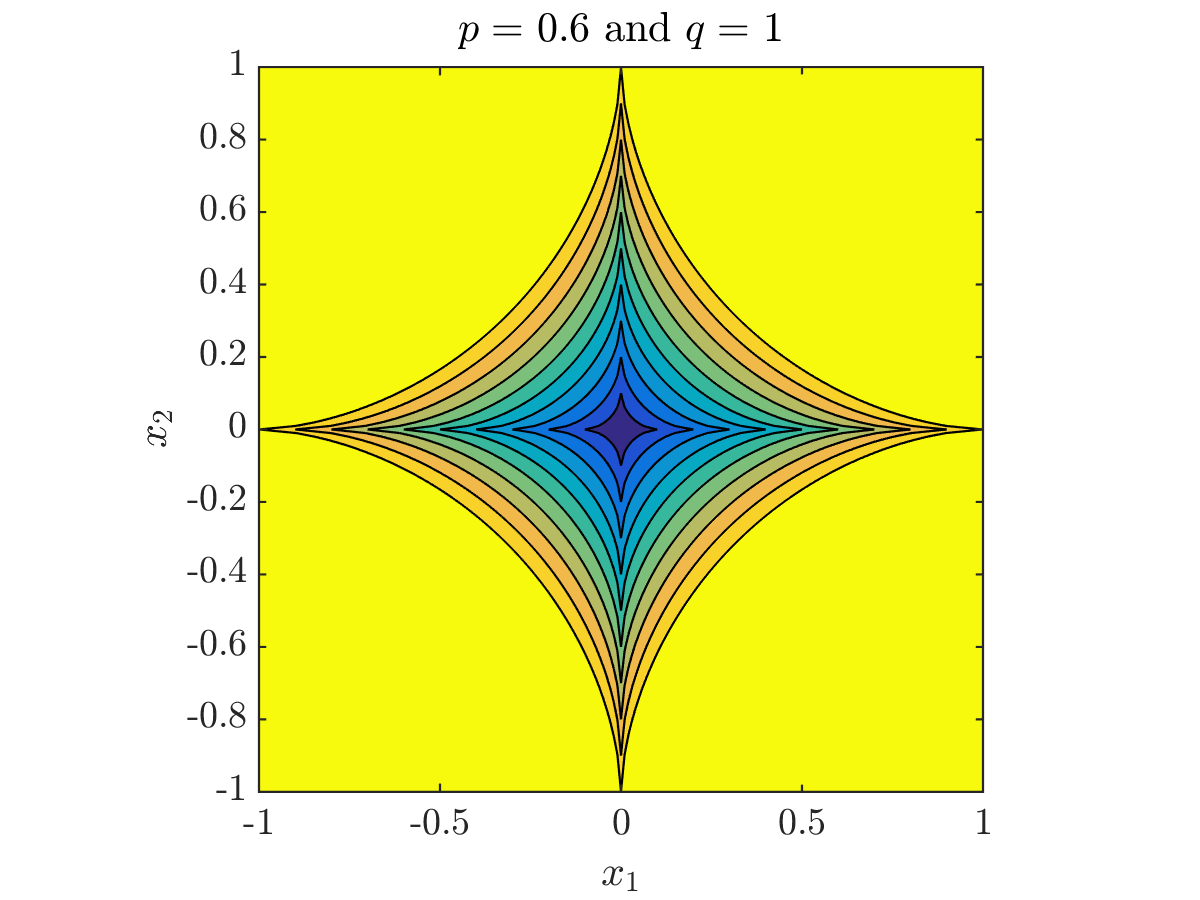}\hfill
\includegraphics[width=0.32\textwidth]{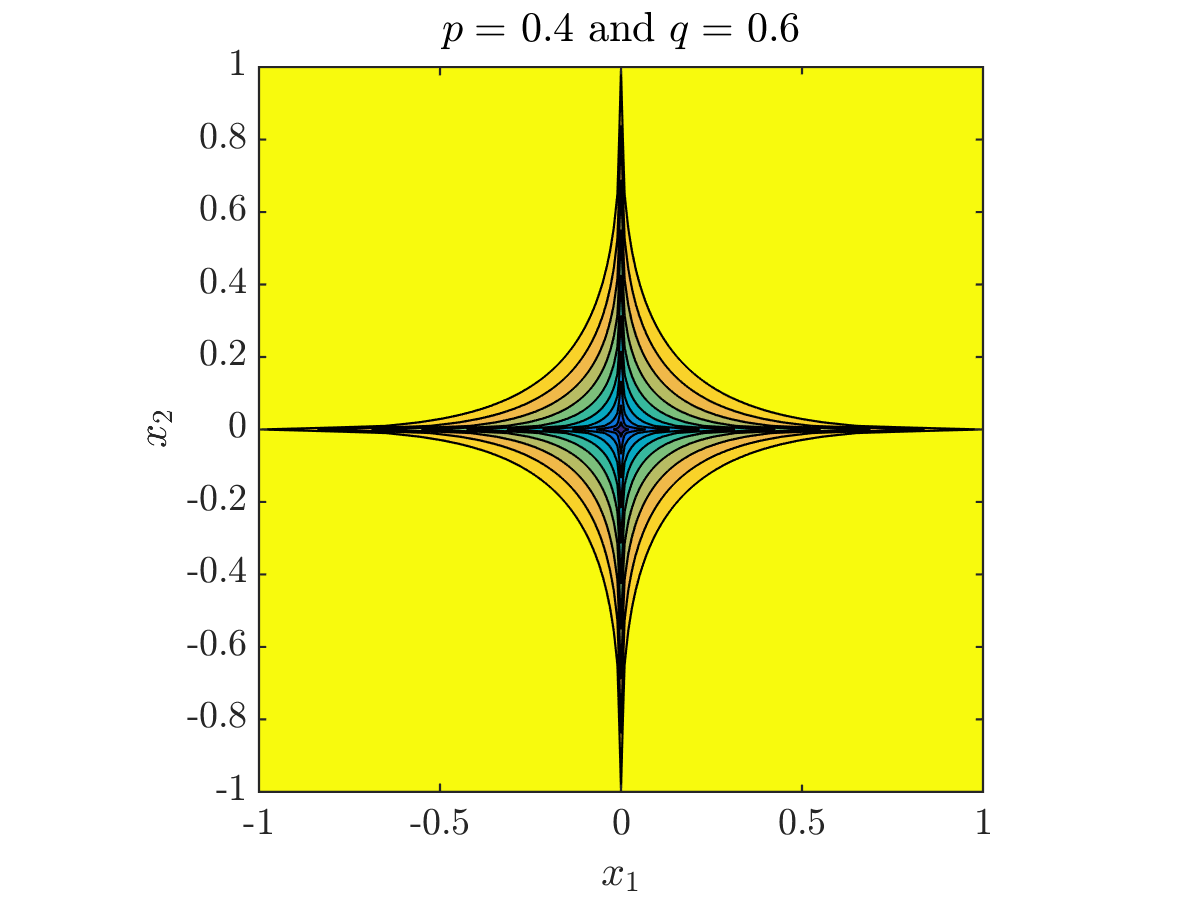}\hfill
\includegraphics[width=0.32\textwidth]{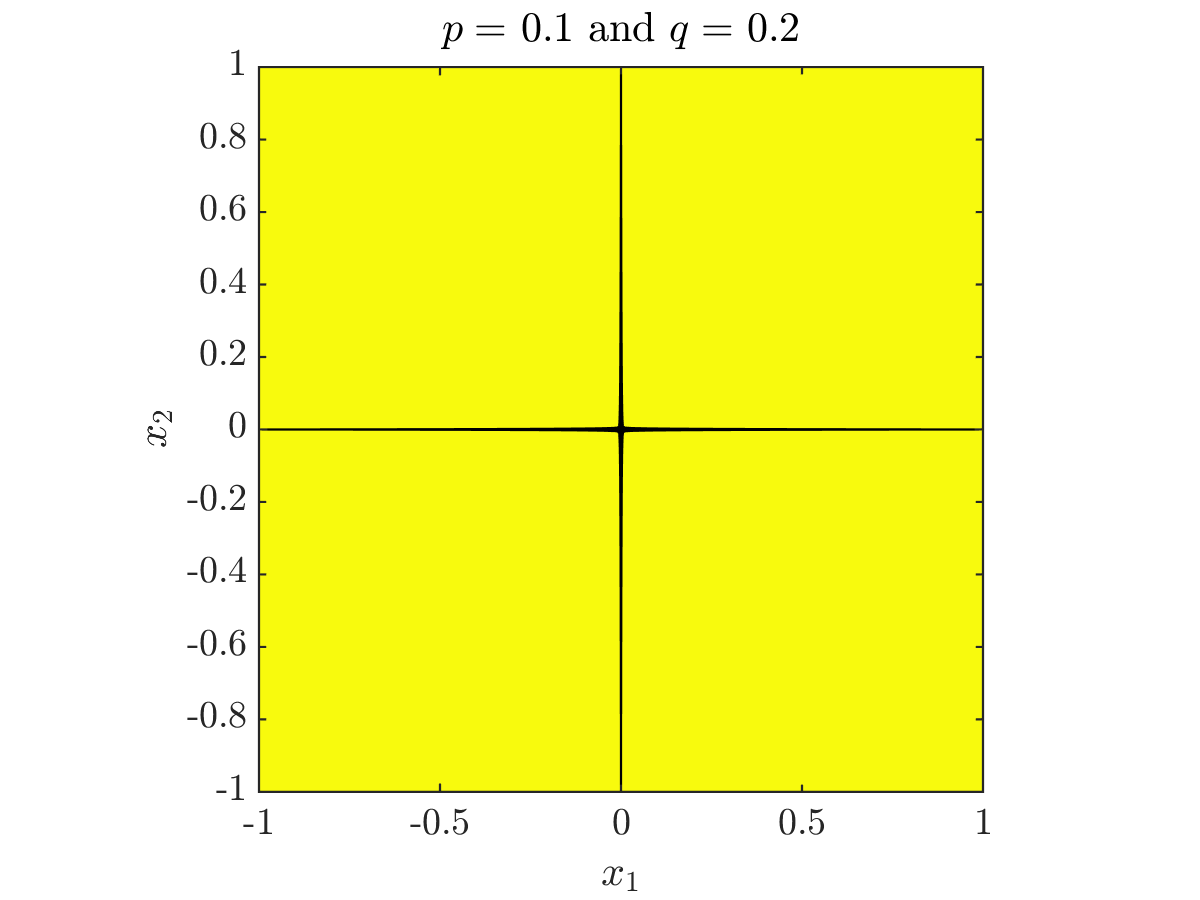}
	\caption{Contour levels ($0.1$ to $1$) of different balls $\|u\|_p^q$.}\label{uballs}
\end{figure}

To further illustrate the effect of \eqref{CFpq}  let us consider
the case $p=1/2$ and $q=1$. Then the  running cost for the control is given by
\[
\sum^m_{i=1} |u_i(t)|+2\sum_{i,j\in\{1,\ldots,m\},i\neq j}|u_i(t)u_j(t)|^{1/2},
\]
where the $L^1$-penalization on $u_i$ will support  sparsity in the control and the product penalization enhances switching phenomena. More generally, if $\frac{q}{p}=j \in \mathbb{N}$ is an integer, then
the running cost is is  combination of an $L^q$-penalization on each control coordinate $u_i$, and it further contains weighted summands of (up to) $j-$ tuples of fractional  powers of $|u_i|$, with the sum of the powers for each tuple summing to $q$. Fixing $q$, and decreasing $p$ we expect that the control cost \eqref{CFpq} increases the switching nature of the optimal  controls, since the weights on the tuples  compared to those on the singletons increase. Moreover, decreasing $q$ we expect that the subdomain over which the optimal  control vanishes (in all coordinates) increases.  These properties  will be illustrated by numerical experiments.

The case with $p=q$ and $0<p\leq 1$ has been studied in \cite{KKR17}.  Existence and  sparsity properties of optimal controls have been analyzed for this case, and these  properties have been observed in the numerical simulations in the case with $0<p=q<1$. In the present work, the analysis is made for  more general nonconvex problems with the control cost \eqref{CFpq}. Concerning the question of existence of optimal solutions, which is not guaranteed in general, we follow the ideas from  \cite{KKR17} to reformulate the problem in infinite-dimensional sequence spaces by descretizing the controls,  and  extending an important result on weakly sequentially continuous mappings from \cite{IK14} to obtain the existence result for our purposes.

The analysis of the sparsity and switching structure is based  on optimality conditions. For this purpose we derive the  necessary first order optimality  conditions of the original problem, which follow from general results which are  available  in the literature. We also derive sufficient optimality condition for the reformulated problems. Subsequently, we investigate the sparsity and switching properties of the optimal controls  under box constraints. Finally, by using dynamic programming techniques, optimal control laws are approximated globally in the state space for linear and nonlinear dynamical systems.

Let us mention previous related work on sparse  and switching control. Closed-loop infinite horizon sparse optimal control problems with $L^p$ ($0<p\leq 1$) functionals were analyzed in \cite{KKR17}. Open-loop, finite horizon $L^1$ sparse optimal control for dynamical systems have been studied in e.g. \cite{HAJ79,VM06,ALT15}. Open-loop, finite horizon sparse optimal control for partial differential equations was studied in e.g.  \cite{HSW12,CCK12,PV13}. The Hamilton-Jacobi-Bellman equation for impulse and switching controls was discussed in \cite{BC97,Y89}. The synthesis of sparse feedback laws via dynamic programming has been studied in \cite{Falcone14,KKK16,Albi17}.  In the context of partial differential equations optimal control of systems switching among different modes were analysed in \cite{HLS09,HS13}, problems with convex switching enhancing functionals  were investigated in \cite{CRKB16,CIK16}, and problems with nonconvex switching penalization in \cite{CKR17}.  In \cite{Z11}  switching controls based on functionals suggested
by controllability considerations were investigated.
 Mixed (quasi-)norms as
in  \eqref{CFpq} with $p \neq q$  have been used earlier, though typically in convex situations with
$ p\ge1, q\ge 1$. These investigations were carried out in the context of machine learning, regression analysis, and mathematical imaging, with
the goal of achieving group sparsity or structured parsimony, see e.g. \cite{BC97, Fornasier09, Kowa, WNF09, ZRY09},
and the references given there.

The structure of the paper is the following. The short section 2 contains the precise problem formulation. Existence of optimal controls, which are discretized in time, is obtained in section 3. The sparsity and switching structure  of the optimal controls is analyzed  on the basis of the optimality conditions for the  time-continuous as well as the time discrete problems in sections 4 and 5, respectively, and section 6 contains numerical results.

\section{Optimal control problem}
Let $U\subset\R^m$ be a closed set and let $f_i:\R^d\ra\R^d$ be continuous differentiable functions for $i=0,\ldots,m$. We consider the following control system: given $x\in\R^d$,
\begin{equation}\label{odey}
\left\{
\begin{array}{ll}
\dot y(t)=f_0(y(t))+\sum^m_{i=1}f_i(y(t))u_i(t) & \text{in}\ ]0,\infty[,\\
y(0)=x.
\end{array}
\right.
\end{equation}
Here $y(t)\in\R^d$ is the state variable and $u(t)=(u_1(t),\ldots,u_m(t))\in\R^m$ is the input control.
Given $p\in]0,1[$, we set for the vector $u=(u_1,\ldots,u_m)\in\R^m$
\[
\|u\|_p=\left(\sum^m_{i=1}|u_i|^p\right)^{1/p}.
\]
Let $q\in[p,1]$, $\lambda>0$, $\gamma>0$ and $y_d\in\R^d$. For any $x\in\R^d$, consider the cost functional
\begin{equation}\label{costfunctional}
J(x,u):=\int^{\infty}_0 e^{-\lambda t}\left(\frac{1}{2}\|y(t)-y_d\|^2_2+\gamma\|u(t)\|_p^q\right)dt,
\end{equation}
where $(y,u)$ satisfies the state equation \eqref{odey}, and the infinite horizon optimal control problem
\begin{equation}\label{ocpb}
\inf\left\{ J(x,u)\,:\,u\in L^{\infty}(0,\infty;U)\right\}.
\end{equation}
In \eqref{costfunctional}, $\lambda$ is called the discount factor, $\gamma$ is the weight of control cost and $\|\cdot\|_2$ is the Euclidean norm in $\R^d$. The following assumptions are made.
\begin{enumerate}
\item[(H1)] The control set $U$ is compact and convex.
\item[(H2)] There exists $L>0$ such that $\|f_i(x_1)-f_i(x_2)\|_2\leq  L \|x_1-x_2\|_2$ for all $x_1,x_2\in\R^d$, and $i=0,\ldots,m$.
\item[(H3)] For each $x\in\R^d$, there exists $u\in L^{\infty}(0,\infty;U)$ such that $J(x,u)<\infty$.
\end{enumerate}
Let us mention that the cost functional $J$ is convex in the state variable and nonconvex in the control. The case $q=p$ has been discussed in \cite{KKR17}.

\section{Time-discretized model}
Since the cost functional $J$ is not convex in $u$,
existence of optimal controllers for problem \eqref{ocpb} does not hold in general. For this purpose we  analyze  the existence in the case of a  time-discretized  approximation to  \eqref{ocpb}.
We introduce the temporal grid $(t_k)_{k\in\N}$:
\[
0=t_0<t_1<\cdots<t_k<t_{k+1}<\cdots,
\]
and denote by $I_k=[t_k,t_{k+1}[$ for $k\in\N$. The  control is then  restricted to  the following set of piecewise constant functions:
\[
U^\Delta=\{ u=(u_1,\ldots,u_m)\in L^{\infty}(0,\infty;U)\,:\,u_i(t)=u_{i,k}\ \text{for}\ t\in I_k,\ i=1,\ldots,m,\ k\in\N\}.
\]
Consider the following optimal control problem
\begin{equation}\label{ocpdtd}
\inf_{u\in U^\Delta} J^\Delta(x,u):=\int^\infty_0 e^{-\lambda t}\left(\frac{1}{2}\|y(t)-y_d\|^2_2+\gamma
\left(\sum^m_{i=1}\sum^\infty_{k=0}|u_{i,k}|^p\mathbbm{1}_{I_k}(t)\right)^{q/p}\right)dt,
\end{equation}
where $y$ solves \eqref{odey}. A direct computation shows that
\[
J^\Delta (x,u)=\int^\infty_0 e^{-\lambda t}\frac{1}{2}\|y(t)-y_d\|^2_2 dt+ \gamma\sum^\infty_{k=0}b_k\left(\sum^m_{i=1}|u_{i,k}|^p\right)^{q/p},
\]
where
\[
b_k=\int_{I_k} e^{-\lambda t}dt=\frac{1}{\lambda}(e^{-\lambda t_k}-e^{-\lambda t_{k+1}}).
\]
For any $r>0$, the infinite dimensional sequence space $\ell^r=\{u\in\ell^\infty\,:\sum^\infty_{k=1}|u_k|^r<\infty\}$ is endowed with
\[
\|u\|_r=\left(\sum^{\infty}_{k=0}|u_k|^r\right)^{1/r}.
\]
For convenience we recall that $\ell^r$, with $1<r<\infty$, are reflexive Banach spaces and $\ell^{r_1}\subset\ell^{r_2}$ if $1\leq r_1<r_2\leq\infty$.
To investigate the existence of optimal controls, we follow the idea introduced in \cite{IK14} by defining the following reparametrization. For $\{b_k\}_{k=1}^\infty$ as above we define $\ell_b^q(\ell^p)$ as the space of all $\R^m$-valued sequences $u$ such that $\sum_{k=0}^{\infty} b_k(\sum_{i=1}^{m}|u_{i,k}|^p)^{q/p} < \infty$. In particular $u_{i,k}$ denotes the i-th coordinate of the k-th sequence element $u_{\cdot,k}$. Similarly $\ell^{q/p}(\ell^1)$ stands for the space of all $\R^m$-valued sequences $w$ such that $\sum_{k=0}^{\infty} ({\color{black}\sum^m_{i=1}} |w_{i,k}|)^{q/p} < \infty$. We introduce the mapping $\Phi: \ell^{q/p}(\ell^1) \to \ell_b^q(\ell^p)$ by
\begin{equation*}
 (\Phi(w))_{i,k} = \phi(w_{i,k}), \text{ where }\phi(w_{i,k}) = b_k^{-1/q}|w_{i,k}|^{1/p} \mathop{sgn}(w_{i,k}).
\end{equation*}
{
 We have the following lemma:
\begin{lem}\label{l:KK1}
The mapping $\Phi:\ell^{q/p}(\ell^1) \to l_b^q(\ell^P)$ is an isomorphism.	
\end{lem}
\begin{proof}
For $w \in \ell^{q/p}(\ell^1)$ we have
\begin{equation}\label{eq:KK1}
\begin{aligned}
\|\Phi(w)\|_{\ell^q_b(\ell^p)} & =\left( \sum_{k=0}^{\infty} b_k \left(\sum_{i=1}^{m}|\phi(w_{i,k})|^p \right)^{q/p} \right)^{1/q}= \left(\sum_{k=0}^{\infty} b_k \left(\sum_{i=1}^{m} b_k^{-p/q}|w_{i,k}| \right)^{q/p} \right)^{1/q} \\  & = \left(\sum_{k=0}^{\infty} \left(\sum_{i=1}^{m}|w_{i,k}| \right)^{q/p} \right)^{1/q} = \|w\|_{\ell^{q/p}(\ell')}^{1/p} < \infty,
\end{aligned}
\end{equation}
and thus $\Phi(w) \in \ell^q_b(\ell^p)$ as desired.
The inverse to $\Phi$ is given by
\begin{equation*}
\left(\Phi^{-1}(u) \right)_{i,k} = b_k^{p/q} |u_{i,k}|^p \mathop{sgn}(u_{i,k}),
\end{equation*}
and this concludes the proof.
\end{proof}

From Lemma \ref{l:KK1} and \eqref{eq:KK1} it follows that \eqref{ocpdtd} is equivalent to
\begin{equation}\label{eq:KK2}
\inf_{w:(\Phi(w))_{\cdot,k}\in U} \frac{1}{2} \int_{0}^{\infty} e^{-\lambda t} \|y(t) - y_d\|^2_2 dt + \gamma \sum_{k=0}^{\infty} \left(\sum_{i=1}^{m}|w_{i,k}| \right)^{q/p},
\end{equation}
where $y(\cdot)$ satisfies
\begin{equation} \label{eq:KK3}
\begin{cases}
\begin{array}{ll}
 \dot y(t) = f_0(y(t))+\sum^m_{i=1}f_i(y(t))b_k^{-1/q}|w_{i,k}|^{1/p}\mathop{sgn}(w_{i,k}) \text{ for }\ t\in[t_k,t_{k+1}), k=0,1,\ldots,\\
 y(0) = x.
\end{array}
\end{cases}
\end{equation}

Thus the relationship between the controls on $[t_k,t_{k +1})$ in terms of $u$- and $w$- coordinates is given by $u_{i,k} = \phi(w_{i,k})$. To argue existence of \eqref{ocpdtd} or equivalently \eqref{eq:KK1} we need the following lemma whose proof is inspired by \cite[Lemma 2.1]{IK14}. We introduce the mapping
\begin{equation*}
\psi: \ell^{q/p} \to \ell^q, \text{ by } \left( \psi(z) \right)_k = |z_k|^{1/p} \mathop{sgn} z_k, \text{ for } k = 1, \ldots
\end{equation*}
for scalar valued sequences $z$.

\begin{lem}\label{l:KK2}
Let $q>p$, and let $\beta$ denote the conjugate of $q/p$. Then the mapping $\psi: \ell^{q/p} \to l^{\beta}$ is weakly (sequentially) continuous, i.e. $z^n \to \bar{z}$ weakly in $\ell^{q/p}$ implies that $\psi(z^n) \to \psi(\bar{z})$ weakly in $\ell^{\beta}$.
\end{lem}

\begin{proof}
First note that $\ell^q \subset	\ell^{\beta}$, since $\beta = \frac{q}{q-p} > q$. Let $r=\frac{1}{p}+1$ and let $r^*$ denote the conjugate exponent of $r$ given by $r^*=p+1$. Then
\[
1<\frac{q}{p}\leq \frac{1}{p}<r,
\]
which implies $r^*<\beta$. For any $z\in\ell^{q/p}$, we have
\[
\|z\|^r_r=\sum^\infty_{k=1}|z_k|^r,\ \|\psi(z)\|^{r^*}_{r^*}=\sum^\infty_{k=1}|z_k|^{r^*/p}=\sum^\infty_{k=1}|z_k|^r,
\]
and
\[
(\psi(z),z)_{\ell^{r^*},\ell^r}=\sum^{\infty}_{k=1} \psi(z)_k\cdot z_k=\sum^\infty_{k=1}|z_k|^{1/p+1}=\sum^\infty_{k=1}|z_k|^r.
\]
The above computations imply that
\[
(\psi(z),z)_{\ell^{r^*},\ell^r}=\|\psi(z)\|_{r^*}\|z\|_r, \text{ and } \|\psi(z)\|^{r^*}_{r^*}=\|z\|^r_r,
\]
which means that $\psi$ is the duality mapping from $\ell^r$ to $\ell^{r^*}$ and is weakly sequentially continuous. If $z^n\ra\bar z$ weakly in $\ell^{q/p}$, then $z^n\ra\bar z$ weakly in $\ell^r$ since $1<q/p<r$. Therefore, $\psi(z^n)\ra\psi(\bar z)$ weakly in $\ell^{r^*}$. Using that $r^*<\beta$, this implies that $\psi(z^n)\ra\psi(\bar z)$ weakly in $\ell^{\beta}$.
\end{proof}

\begin{thm}\label{thmexistence}
There exists a minimizer $\bar w$ to \eqref{eq:KK1}, and hence a minimizer $\bar u$ to \eqref{ocpdtd}.
\end{thm}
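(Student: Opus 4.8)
The plan is to run the direct method of the calculus of variations on the reparametrized problem \eqref{ocpdtdw}, with Lemma \ref{lemell2c} supplying the compactness-to-continuity step for the nonlinear state equation; I treat the case $q>p$, the case $q=p$ being covered by \cite{KKR17}. Let $(w^n)$ be a minimizing sequence and $\tilde J$ the objective of \eqref{ocpdtdw}; by (H3) the value is finite, so $\tilde J(w^n)\le C$. Since $\bigl(\sum_{i=1}^m|w^n_{i,k}|\bigr)^{q/p}\ge|w^n_{j,k}|^{q/p}$, the bound on $\tilde J(w^n)$ controls $\|w^n_j\|_{q/p}^{q/p}=\sum_k|w^n_{j,k}|^{q/p}$ uniformly in $n$, for each $j$. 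Because $q/p>1$, the space $\ell^{q/p}$ is reflexive, so along a subsequence $w^n_i\rightharpoonup\bar w_i$ weakly in $\ell^{q/p}$ for $i=1,\dots,m$.

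The key step is weak-strong continuity of the control-to-state map. By Lemma \ref{lemell2c}, $\psi(w^n_i)\rightharpoonup\psi(\bar w_i)$ weakly in $\ell^{\beta}$; pairing with coordinate functionals gives $\psi(w^n_i)_k\to\psi(\bar w_i)_k$, so the piecewise-constant controls $u^n_{i,k}=b_k^{-1/q}\psi(w^n_i)_k$ converge to $\bar u_{i,k}=b_k^{-1/q}\psi(\bar w_i)_k$ for each $k$. As $U$ is compact (H1), these controls are uniformly bounded, so by (H2) and Gronwall's inequality the states $y^n$ are equibounded and equi-Lipschitz on each compact time interval; Arzel\`a--Ascoli together with the coordinate-wise convergence of the controls yields $y^n\to\bar y$ uniformly on compacta, and passing to the limit in the integral form of \eqref{DSy}---the drift $f_0$ by uniform convergence, the control terms as products of a uniformly convergent factor $f_i(y^n)$ with convergent controls---shows that $\bar y$ is the state driven by $\bar w$. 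Since $U$ is closed, $\bar u_{i,k}=\lim_n u^n_{i,k}\in U$, so $\bar w$ is feasible.

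It remains to check lower semicontinuity of $\tilde J$. For the tracking term, $y^n(t)\to\bar y(t)$ for every $t\ge0$, so Fatou's lemma gives $\int_0^\infty e^{-\lambda t}\tfrac12\|\bar y-y_d\|_2^2\,dt\le\liminf_n\int_0^\infty e^{-\lambda t}\tfrac12\|y^n-y_d\|_2^2\,dt$. For the control term, $w\mapsto\sum_k\bigl(\sum_i|w_{i,k}|\bigr)^{q/p}$ is convex---a sum of compositions of the convex increasing map $s\mapsto s^{q/p}$ ($q/p>1$) with $w\mapsto\sum_i|w_{i,k}|$---and is strongly lower semicontinuous as the increasing limit of its continuous partial sums, hence weakly sequentially lower semicontinuous on $\ell^{q/p}$. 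Adding the two estimates gives $\tilde J(\bar w)\le\liminf_n\tilde J(w^n)$, which equals the infimum in \eqref{ocpdtdw}; thus $\bar w$ minimizes \eqref{ocpdtdw}, and transporting it back through the isomorphism $\psi$ produces the minimizer $\bar u$ of \eqref{ocpdtd}.

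The main obstacle is the passage to the limit in the nonlinear state equation: at the level of the sequence spaces the controls converge only weakly, and Lemma \ref{lemell2c} is precisely the device that turns weak convergence of $w^n$ in $\ell^{q/p}$ into weak convergence of $\psi(w^n)$ in $\ell^{\beta}$, hence into the coordinate-wise convergence of the controls. The $L^\infty$ bound coming from compactness of $U$, combined with the Lipschitz bounds (H2), is what upgrades this to strong, uniform convergence of the states, which in turn legitimizes both the identification of the limit dynamics and the application of Fatou on the infinite horizon.
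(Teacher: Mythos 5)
Your argument is correct and follows essentially the same route as the paper: extract a weakly convergent minimizing subsequence in $\ell^{q/p}$, use Lemma \ref{lemell2c} to pass to coordinate-wise convergence of the controls, apply Arzel\`a--Ascoli to identify the limit state, and conclude by lower semicontinuity. The only (harmless) variation is that you establish weak lower semicontinuity of the control cost via convexity of $w\mapsto\sum_k\bigl(\sum_i|w_{i,k}|\bigr)^{q/p}$, whereas the paper uses the coordinate-wise convergence $w^n_{i,k}\to\bar w_{i,k}$ together with Fatou's lemma; both are valid.
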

\begin{proof}
The case $p=q$ has been dealt with in \cite{KKR17}. Therefore, we focus on the case $q> p$.

Let $\left\{w^n=(w^n_1,\ldots,w^n_m)\right\}_{n=1}^\infty \subset \ell^{q/p}(\ell^1)$ denote a minimizing sequence for \eqref{eq:KK1}. We set \\ $\left\{u^n=(u^n_1,\ldots, u^n_m)\right\}_{n=1}^\infty \subset \ell^{q/p}_b(\ell^p)$, where
\begin{equation*}
u^n_{i,k} =\psi(w^n_{i,k}) = b_k^{-1/q} \left(\psi(w_i^n)\right)_k = b_k^{-1/q}|w_{i,k}^n|^{1/p} \mathop{sgn}(w_{i,k}^n), \text{ for } i=1,\ldots,m,\ k=1,\ldots,\ n=1,\ldots.
\end{equation*}
Since $\{w^n\}$ is a minimizing sequence there exists a constant $K>0$ independent of $n$, such that
\begin{equation*}
\sum^{\infty}_{k=0}|w^n_{i,k}|^{q/p}\leq \sum^{\infty}_{k=0}\left(\sum^m_{i=1}|w^n_{i,k}|\right)^{q/p} \leq K
\end{equation*}
for each $i=1,\ldots,m$. This implies that the scalar-valued sequences $\{w_{i,k}^n\}^\infty_{k=1}$ are bounded in $\ell^{q/p}$ uniformly with respect to $i=1,\ldots,m$ and $n=1,\ldots$. Hence there exists, for each $i$, a subsequence (denoted by the same symbols) and some $\bar{w}_i \in \ell^{q/p}$, such that $w_i^n \to \bar{w}_i$ in $\ell^{q/p}$, see e.g. \cite[pp. 73]{Ci90}. From Lemma \ref{l:KK2} we have that $\psi(w_i^n) \to (\bar{w}_i)$ weakly in $\ell^\beta$.

Let $y_n$ be the solution to \eqref{eq:KK2} with control $w^n$. Then on each interval $I_k$, we can deduce by  the Arzel\`a-Ascoli theorem that there exists $\bar y_k:I_k\ra\R^d$ such that
\[
y_n\ra\bar y_k \ \text{uniformly w.r.t.}\ t \in I_k,\ \text{as}\ n\ra\infty.
\]
For $\bar y:[0,\infty)\ra\R^d$ defined by $\bar y|_{I_k}=\bar y_k$ for $k\in\N$, it follows that for any $T>0$
\[
 y_n\ra \bar y\ \text{uniformly in}\ [0,T),\ \text{as}\ n\ra\infty.
\]
Therefore, $\bar y$ is the solution to \eqref{eq:KK2} corresponding to $\bar w:=(\bar w_1,\ldots,\bar w_m)$. Here we use that  the dynamics $f$ is affine in $\psi(w_i)$, $i=1,\ldots,m$. Using the fact that $y_n\ra\bar y$ pointwise in $[0,\infty)$ and $w^n_{i,k}\ra \bar w_{i,k}$ for any $i=1,\ldots,m$, $k\in\N$, we obtain by Fatou's lemma that
\begin{eqnarray*}
&& \int^\infty_0 e^{-\lambda t}\frac{1}{2}\|\bar y(t)-y_d\|^2_2 dt +\sum^{\infty}_{k=0}\left(\sum^m_{i=1}|\bar w_{i,k}|\right)^{q/p} \\
&\leq& \mathop{\lim\,\inf}_{n\ra\infty}\int^\infty_0 e^{-\lambda t}\frac{1}{2}\|y_n(t)-y_d\|^2_2dt +\sum^{\infty}_{k=0}\left(\sum^m_{i=1}|w^n_{i,k}|\right)^{q/p},
\end{eqnarray*}
which implies that $\bar w$ is a minimizer for problem \eqref{eq:KK1}, once we argue its admissability. For this purpose we observe that the weak convergence of $\psi(w_i^n)$ implies the strong convergence of $\psi(w_i^n)_k \to \psi(\bar{w}_i)_k$ for each $k$. Since$\left(b_k^{-q} \psi(w_1^n)_k,\ldots,b_k^{-q}\psi(w_m^n)_k \right) \in U$ for each $k$ this implies that \\ $\left(b_k^{-q}\psi(\bar{w}_1)_k,\ldots,b_k^{-q}\psi(\bar{w}_m)_k\right) \in U$ and thus $\bar{w}$ is admissible.
\end{proof}
}

\section{Sparsity and switching properties: the time-continuous problem}
{For the time-continuous problem \eqref{ocpb}, the Pontryagin maximum principle for infinite horizon problems has been widely studied the literature, see e.g.  \cite[Theorem 3.2]{AV17, H74} and further references provided there. We have the following result.}
\begin{lem}\label{le:k1}
{Assume that the discount factor $\lambda$ is sufficiently large.} Then
for each $x\in\R^d$, if $\bar u$ is a locally optimal control for problem \eqref{ocpb} and $\bar y$ is the associated optimal trajectory, then there exists an adjoint state $\vp:[0,\infty[\ra\R^d$ such that
{
\[
-\dot \vp(t)=Df_0(\bar y(t))^*\vp(t)+\sum^m_{i=1}Df_i(\bar y(t))^*\bar u_i(t)+e^{-\lambda t}(\bar y(t)-y_d), \text{ a.e.}\ t>0,
\]
}
$\lim_{t\to\infty} \vp(t)=0$, and  for $t\in]0,\infty[$ a.e. we have
\begin{eqnarray}\label{optimality}
&&\left\langle f_0(\bar y(t))+\sum^m_{i=1}f_i(\bar y(t))\bar u_i(t),\vp(t)\right\rangle+e^{-\lambda t}\left(\frac{1}{2}\|\bar y(t)-y_d\|^2_2+\gamma\|\bar u(t)\|_p^q\right) \nonumber\\
&\leq&\left\langle f_0(\bar y(t))+\sum^m_{i=1}f_i(\bar y(t))u_i,\vp(t)\right\rangle+e^{-\lambda t}\left(\frac{1}{2}\|\bar y(t)-y_d\|^2_2+\gamma\|u\|_p^q\right)
\end{eqnarray}
for all $u\in U$.
\end{lem}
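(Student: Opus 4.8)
The plan is to read the stated conditions as the Pontryagin minimum principle for the infinite-horizon discounted problem \eqref{ocpb} and to produce them in the standard two-step manner: construct a suitable adjoint state and then extract the pointwise minimality of the control by localized (needle) variations. First I would introduce the pre-Hamiltonian
\[
\mathcal{H}(t,y,u,\varphi)=\left\langle f_0(y)+\sum_{i=1}^m f_i(y)u_i,\varphi\right\rangle+e^{-\lambda t}\left(\tfrac12\|y-y_d\|_2^2+\gamma\|u\|_p^q\right),
\]
so that the asserted costate equation is exactly $-\dot\varphi=\partial_y\mathcal{H}(t,\bar y,\bar u,\varphi)$ and the inequality \eqref{optimality} is precisely the pointwise minimization $\mathcal{H}(t,\bar y(t),\bar u(t),\varphi(t))\le\mathcal{H}(t,\bar y(t),u,\varphi(t))$ for every $u\in U$. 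The crucial structural observation is that the nonconvex, nonsmooth penalty $\|u\|_p^q$ never has to be differentiated in $u$: in the minimum-principle form it enters \eqref{optimality} as an undifferentiated running-cost term, and it is merely continuous on the compact set $U$, so its minimum is attained and its lack of smoothness in $u$ is irrelevant. Only the $C^1$-regularity of the $f_i$ and the Lipschitz bound (H2) are used, and these enter solely through the costate equation.

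For the adjoint I would avoid the finite-horizon terminal condition and instead define $\varphi$ directly by the infinite-horizon variation-of-constants formula
\[
\varphi(t)=\int_t^\infty R(t,s)\,e^{-\lambda s}\bigl(\bar y(s)-y_d\bigr)\,ds,
\]
where $R(t,s)$ is the resolvent of the homogeneous adjoint equation associated with the linearization $A(t)=Df_0(\bar y(t))+\sum_{i=1}^m\bar u_i(t)\,Df_i(\bar y(t))$. Granting convergence of this improper integral, differentiation recovers the stated adjoint ODE, and the integral representation makes the transversality condition $\lim_{t\to\infty}\varphi(t)=0$ transparent. Convergence is where (H1)--(H3) are needed together: (H2) bounds $\|A(t)\|$ uniformly (the $Df_i$ are bounded by $L$ and $\bar u$ ranges in the compact $U$), so $R(t,s)$ grows at most exponentially, while finiteness of the optimal cost (a consequence of (H3)) forces $\int_0^\infty e^{-\lambda s}\|\bar y(s)-y_d\|_2^2\,ds<\infty$, supplying the decay needed to counter that growth.

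With $\varphi$ in hand, I would establish \eqref{optimality} by the classical needle argument: at a Lebesgue point $\tau$ of $\bar u$, replace $\bar u$ on $[\tau,\tau+\e]$ by a constant $u\in U$, estimate the resulting first-order perturbation of $J(x,\cdot)$ through the linearized state equation and the adjoint $\varphi$, and let $\e\downarrow0$. Local optimality of $\bar u$ gives nonnegativity of this first variation, which after identifying the boundary and adjoint terms is exactly $\mathcal{H}(t,\bar y,\bar u,\varphi)\le\mathcal{H}(t,\bar y,u,\varphi)$ at $t=\tau$; since almost every $t$ is such a Lebesgue point, \eqref{optimality} follows a.e.

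The main obstacle I anticipate is the interplay between the exponential decay furnished by the discount factor $e^{-\lambda t}$ and the possible exponential growth, at a rate governed by the Lipschitz constant $L$ in (H2), of both the trajectory $\bar y$ and the resolvent $R(t,s)$. A crude norm bound only yields convergence of the adjoint integral (and hence the transversality condition) when $\lambda$ dominates these growth rates; in the general case one must exploit the square-integrability of $e^{-\lambda t/2}(\bar y-y_d)$ more carefully rather than estimate $R$ pointwise. Because the statement explicitly advertises these conditions as already available in the literature, in practice I would, after fixing the Hamiltonian and checking (H1)--(H3), simply invoke an existing infinite-horizon maximum principle for discounted affine-control systems and verify that its hypotheses are met, reserving the delicate convergence estimate for the improper adjoint integral to that reference.
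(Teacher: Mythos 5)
The paper gives no proof of this lemma at all: it is introduced with the sentence that the necessary conditions ``are known from the literature and are next recalled for convenience,'' and no proof environment follows. Your sketch is therefore not in competition with an argument in the paper; it is an outline of what the cited literature result (an infinite-horizon Pontryagin minimum principle for a discounted, control-affine problem) would look like, and your concluding fallback --- verify (H1)--(H3) and invoke an existing reference --- is exactly the paper's own stance. The outline itself is the standard one and is sound in structure: the pre-Hamiltonian is correctly identified, the observation that $\|u\|_p^q$ is never differentiated in $u$ (it enters \eqref{optimality} only as an undifferentiated running cost compared at two control values) is the right reason why nonsmoothness and nonconvexity of the penalty are harmless here, and the needle-variation argument at Lebesgue points is the classical route to the pointwise inequality. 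Two remarks. First, your reading $-\dot\varphi=\partial_y\mathcal{H}$ silently corrects what appears to be a typo in the displayed costate equation, whose terms $Df_i(\bar y(t))\bar u_i(t)$ are missing a factor of $\varphi(t)$ (as written they are matrices, not vectors); the form you derive, $Df_0(\bar y)^{T}\varphi+\sum_i\bar u_i\,Df_i(\bar y)^{T}\varphi+e^{-\lambda t}(\bar y-y_d)$, is the intended one. Second, the one genuinely delicate point is the one you flag: the transversality condition $\lim_{t\to\infty}\varphi(t)=0$ is known to fail for general infinite-horizon problems (Halkin's counterexample), and your crude resolvent bound only closes the argument when $\lambda$ dominates the growth rate induced by $L$ and the compactness of $U$; outside that regime one must indeed lean on the square-integrability of $e^{-\lambda t/2}(\bar y-y_d)$ or on a tailored literature result. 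Since you identify this gap explicitly and defer it to the reference --- which is all the paper does as well --- the proposal is an acceptable, and in places more informative, account of the lemma.
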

{
\begin{proof}
We sketch a proof, verifying the assumptions of \cite[Theorem 3.2, Corollary B.5]{AV17}. It will be convenient to introduce
$f(y,u)=f_0(y(t))+\sum^m_{i=1}f_i(y(t))u_i(t) $ for the right hand side of \eqref{odey}. Condition  (A1) of the cited result holds due to the structure $f$ and the assumption that the mappings $f_i$ are in  $C^1(\R^d,\R^d)$. Since by assumption $(H1)$ the set $U$ is compact the constant $M= sup_{u\in U)}\|u\|_2$ is well-defined. Consequently by (H2) we find that $\nu=L(1+mM)$ is a global Lipschitz constant for $f(\cdot,u)$, uniformly with respect to $u\in U$. By \cite[Theorem III.5.5]{BC97}, we have that $\bar y(t)\le \tilde M\sqrt{t}e^{\nu t}$ for some constant $\tilde M$ independent of $t\in[0,\infty)$. Therefore for any $\e>0$, there exists a constant $M_\e$ such that $\bar y(t)\le M_{\e}e^{(\nu+\e)t}$. Then for $\lambda > 2 \nu $ all assumptions of \cite[Corollary B.5]{AV17} are satisfied and we can conclude the existence of an adjoint state $\vp$ satisfying \eqref{optimality}.
\end{proof}

Throughout the remainder of this section we assume that $\lambda > 2 \nu $, with $\nu$ defined in the previous proof. We point out that the property
$\lim_{t\to\infty} \vp(t)=0$, will not be needed in this section.
}
Suggested by \eqref{optimality}, we shall investigate the minimizers of the following function
\[
G_t(u):=\sum^m_{i=1}\langle f_i(\bar y(t)), \vp(t)\rangle u_i+\gamma e^{-\lambda t}\|u\|^q_p,\ \forall\,u\in U,
\]
where $t\in ]0,\infty[$.
We now assume  that the set of control constraints $U$ has the form of box constraints:
\begin{equation}\label{eq:k1}
U_\infty:=\{u=(u_1,\ldots,u_m)\in\R^m\,:\, -\rho_i\leq u_i\leq \rho_i,\ i=1,\ldots,m\},
\end{equation}
where $\rho_i>0$.   In this case the optimality condition can be used to derive the following structural properties of a minimizer.

\begin{thm}\label{THMswitching}
Let $\bar u$ be an optimal control for problem \eqref{ocpb} with $U_\infty$ given in \eqref{eq:k1}, let  $\bar y$ be the associated optimal trajectory and $\vp$  the associated adjoint state. For $t\in]0,\infty[$ a.e., we define the following index sets:
\[
I^-(t)=\{i\in\{1,\ldots,m\}\,:\, |\langle f_i(\bar y(t)),\vp(t)\rangle|\rho_i^{1-q}<\gamma e^{-\lambda t}\},
\]
\[
I^0(t)=\{i\in\{1,\ldots,m\}\,:\, |\langle f_i(\bar y(t)),\vp(t)\rangle|\rho_i^{1-q}=\gamma e^{-\lambda t}\},
\]
\[
I^+(t)=\{i\in\{1,\ldots,m\}\,:\, |\langle f_i(\bar y(t)),\vp(t)\rangle|\rho_i^{1-q}>\gamma e^{-\lambda t}\},
\]
Then the following properties hold:
\begin{enumerate}[(i)]
\item
For $t\in ]0,\infty[$ a.e. and $i\in I^-(t)$,
\[
\bar u_i(t)=0.
\]
\item
For $t\in ]0,\infty[$ a.e. and $i\in I^0(t)$,

\[
\left\{
\begin{array}{lllll}
\bar u_i(t)=0, & \text{if}\ I^+(t)\neq\emptyset,\\[1.7ex]
\bar u_i(t)\in\{0,-\rho_i\mathop{sgn}(\langle f_i(\bar y(t)),\vp(t)\rangle)\}, & \\
\bar u_i(t) \bar u_{j}(t)=0,\ i,j\in I^0(t),\ i\neq j, & \text{if}\ I^+(t)=\emptyset,\ q\in[p,1[,\\[1.7ex]
\bar u_i(t)\in [0,-\rho_i\mathop{sgn}(\langle f_i(\bar y(t)),\vp(t)\rangle)], & \\
\bar u_i(t) \bar u_{j}(t)=0,\ i,j\in I^0(t),\ i\neq j, & \text{if}\ I^+(t)=\emptyset,\ q=1.
\end{array}
\right.
\]
\item
For $t\in ]0,\infty[$ a.e. and $i\in I^+(t)$, we have
\[
\bar u_i(t)\in\{0,-\rho_i\mathop{sgn}(\langle f_i(\bar y(t)),\vp(t)\rangle)\},
\]
with $\max_{i\in I^+(t)}|\bar u_i(t)| \neq 0$.
\end{enumerate}
\end{thm}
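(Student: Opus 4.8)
The plan is to exploit the pointwise optimality condition stated just before the theorem: for a.e.\ $t$ the vector $\bar u(t)$ minimizes $G_t$ over $U_\infty$. Writing $a_i=\langle f_i(\bar y(t)),\vp(t)\rangle$ and $\mu=\gamma e^{-\lambda t}>0$, I first reduce to a nonnegative problem. Since $\|u\|_p^q$ depends only on $(|u_i|)_i$ while $\sum_i a_iu_i$ is, for fixed magnitudes, minimized by the choice $\operatorname{sgn}(u_i)=-\operatorname{sgn}(a_i)$, every minimizer obeys this sign relation (and has $\bar u_i=0$ when $a_i=0$, an index that lies in $I^-$). Setting $v_i=|\bar u_i(t)|$, it remains to minimize
\[
\tilde G(v)=-\sum_{i=1}^m|a_i|\,v_i+\mu\Big(\sum_{i=1}^m v_i^p\Big)^{q/p}
\]
over the box $\prod_{i}[0,\rho_i]$. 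With $c_i=|a_i|\rho_i$ and $w_i=\rho_i^p$ one has $|a_i|\rho_i^{1-q}=c_i/w_i^{q/p}$, so that $i\in I^-,I^0,I^+$ corresponds exactly to $c_i<\mu w_i^{q/p}$, $c_i=\mu w_i^{q/p}$, $c_i>\mu w_i^{q/p}$.

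The argument rests on two facts. First, \emph{coordinatewise concavity}: fixing $v_j$ for $j\neq i$ and setting $\sigma=\sum_{j\neq i}v_j^p$, a direct computation gives
\[
\frac{d^2}{dv_i^2}(\sigma+v_i^p)^{q/p}=q\,(\sigma+v_i^p)^{\frac{q}{p}-2}v_i^{p-2}\big[(q-1)v_i^p+(p-1)\sigma\big]\le0,
\]
with strict inequality whenever $q<1$ or $\sigma>0$. Hence $\tilde G$ is concave in each coordinate separately, so at a minimizer every active coordinate lies at an endpoint $\{0,\rho_i\}$ (bang--bang); the single exception is the degenerate case $q=1$ with exactly one active coordinate, where $\sigma=0$ and the coordinate map is affine. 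Second, \emph{increment inequalities for the outer power}: since $q/p\ge1$, the function $g(x)=x^{q/p}$ is convex with $g(0)=0$, whence its increments are nondecreasing, $g(A+h)-g(A)\ge g(h)=h^{q/p}$ for $A,h\ge0$, and, when $q>p$, this inequality is strict unless $A=0$.

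Claim (i): with the other coordinates fixed, raising $v_i$ from $0$ changes $\tilde G$ by
\[
-|a_i|\,v_i+\mu\big[(\sigma+v_i^p)^{q/p}-\sigma^{q/p}\big]\ge -|a_i|\,v_i+\mu v_i^q=v_i^q\big(\mu-|a_i|v_i^{1-q}\big),
\]
using the increment inequality; for $i\in I^-$ this is positive since $v_i^{1-q}\le\rho_i^{1-q}$ and $|a_i|\rho_i^{1-q}<\mu$, so $\bar u_i=0$. For claim (iii), bang--bang is the coordinatewise concavity above. To see that at least one $I^+$ coordinate is active, note that any $v$ supported in $I^0$ satisfies $\sum_i|a_i|v_i=\mu\sum_i\rho_i^{q-1}v_i\le\mu\sum_iv_i^q\le\mu(\sum_iv_i^p)^{q/p}$ (using $q-1\le0$, $v_i\le\rho_i$, and superadditivity of $g$), so $\tilde G(v)\ge0$; on the other hand, for any $i_0\in I^+$ the single--coordinate state gives $\tilde G=-c_{i_0}+\mu w_{i_0}^{q/p}<0$. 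Hence the minimizer cannot be supported in $I^-\cup I^0$ and must activate an $I^+$ coordinate.

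Finally I treat claim (ii). If $I^+\neq\emptyset$, the minimizer already activates an $I^+$ coordinate, so any additional active index $i_1\in I^0$ would make at least two coordinates active; by concavity both sit at $\rho$, and writing $W=\sum_j v_j^p$ the removal--optimality $-c_{i_1}+\mu[W^{q/p}-(W-w_{i_1})^{q/p}]\le0$ together with $c_{i_1}=\mu w_{i_1}^{q/p}$ and the \emph{strict} increment inequality forces $W=w_{i_1}$, i.e.\ $i_1$ is the sole active coordinate, contradicting the activation of an $I^+$ index; thus $\bar u_i=0$ on $I^0$. If $I^+=\emptyset$, the very same removal argument rules out two simultaneously active $I^0$ coordinates, giving the switching relation $\bar u_i\bar u_j=0$; and for the one possibly active $i_1\in I^0$ one has $\sigma=0$, so its coordinate map equals $-|a_{i_1}|v+\mu v^q$, which vanishes at both $v=0$ and $v=\rho_{i_1}$. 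For $q\in[p,1)$ this map is strictly concave, so the minimum is attained only at the endpoints and $\bar u_{i_1}\in\{0,-\rho_{i_1}\operatorname{sgn}(a_{i_1})\}$, while for $q=1$ it vanishes identically, yielding the whole segment $\bar u_{i_1}\in[0,-\rho_{i_1}\operatorname{sgn}(a_{i_1})]$. The main obstacle is precisely this interplay: coordinatewise concavity of $\tilde G$ drives the bang--bang structure, whereas it is the \emph{strict} convexity of the outer power $t\mapsto t^{q/p}$---available only when $q>p$---that makes simultaneous activation suboptimal and thereby produces switching and the exclusion of $I^0$; the separable borderline $q=p$ lacks this strictness and is treated in \cite{KKR17}.
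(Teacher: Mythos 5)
Your proof is correct (for $q>p$; like the paper you defer the separable borderline $q=p$ to \cite{KKR17}), but it follows a genuinely different route. The paper's Step 1 rests on the decomposition $G_t=G_1+\gamma_t G_2$ with $G_1(u)=\sum_i\vp_{t,i}u_i+\gamma_t\sum_i u_i^q$ separable and concave and $G_2(u)=\bigl(\sum_i u_i^p\bigr)^{q/p}-\sum_i u_i^q\ge 0$ vanishing exactly when at most one coordinate is active, so that $G_2$ acts as an explicit ``switching indicator''; for (iii) the paper then orders the coordinates, parametrizes $\bar w=(\beta_0\bar w_1,\beta_1\beta_0\bar w_1,\dots)$, and runs an induction showing that each one-dimensional restriction $G_{\beta,j}$ cannot have an interior minimum. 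You replace both steps by two elementary facts: the second-derivative computation showing $v_i\mapsto\tilde G(v)$ is concave in each coordinate (strictly so unless $q=1$ and all other coordinates vanish), which yields the bang--bang structure on $I^+$ in one stroke and avoids the paper's ordering/induction machinery; and the (strict, for $q>p$) superadditivity of $x\mapsto x^{q/p}$, used through increment and removal variations to obtain (i), the negativity of the minimum when $I^+\neq\emptyset$, and the exclusion of simultaneously active $I^0$ coordinates. Your treatment is more unified and arguably more economical; what the paper's $G_1+G_2$ splitting buys is a transparent separation between the separable thresholding effect (driving (i) and (ii)) and the coupling term responsible for switching, and a framework in which the case $q=p$ (where $G_2\equiv 0$) degenerates gracefully to the analysis of $G_1$ alone. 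All the individual steps you use check out: the sign reduction, the identity $|a_i|\rho_i^{1-q}=c_i/w_i^{q/p}$ aligning the index sets, the coordinatewise second derivative, the increment bound giving (i), the inequality $\sum_i|a_i|v_i\le\mu\bigl(\sum_i v_i^p\bigr)^{q/p}$ for $v$ supported in $I^-\cup I^0$, and the removal argument using $c_{i_1}=\mu w_{i_1}^{q/p}$ together with strict superadditivity; and there is no circularity in invoking the activation of an $I^+$ coordinate (established independently of (ii)) to conclude $\bar u_i=0$ on $I^0$ when $I^+\neq\emptyset$.
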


 Let us briefly comment on sparsity and switching properties which follow from Theorem \ref{THMswitching}. For the coordinates in the index set $I^-(t)$, the controllers are zero. We refer to these coordinates as the  sparse control coordinates  at the time $t$.  If $I^+(t)= \emptyset$, then $i\in I^0(t) \cup I^-(t)$ for all $i=1,\dots, m$, and hence $u$ is switching or sparse at time $t$. If $I^+(t) \neq \emptyset$ then the coordinates in $I^0(t)$ behave like those in $I^-(t)$, they are $0$.
 The coordinates of the optimal control in the index set $I^+(t)$ are not completely determined by  (iii). They are either active, or zero and thus they join the set of sparse control coordinates.
 Comparing to the case $p=q$ which was treated in \cite[Proposition 5.2]{KKR17}, the case (iii) is such that the control is necessarily active. Thus $p<q$ enhances additional sparsity compared to $p=q$. Finally, as a consequence of the box constraints,  the optimal control is of bang-off-bang type,  except for   case (ii) with  $q=1$.

\begin{proof}
We shall use that by Lemma \ref{le:k1} we know that $\bar u(t)$ minimizes $G_t$ in $U_\infty$ for a.e. $t\in (0,\infty)$. For  convenience of notations, let us set
\[
\vp_{t,i}=\langle f_i(\bar y(t)),\vp(t)\rangle,\ \gamma_t=\gamma e^{-\lambda t}.
\]
In Step 1 below we verify (i) and (ii). The claims in (iii) are proved in Step 2.

\smallskip

{\bf Step 1: proof of (i) and (ii)}.\\
At first, let us focus on the case $q>p$. Consider further the case $\vp_{t,i}\leq 0$ for $i=1,\ldots,m$.  Then $\bar u_i(t)\geq 0$ for $i=1,\ldots,m$. We introduce
\[
\Omega:=\{u\in U_\infty\,:\,0\leq u_i\leq \rho_i,\ i=1,\ldots,m\}.
\]
Let us decompose $G_t$ in $\Omega$ as follows:
\[
G_t(u)=G_1(u)+\gamma_t G_2(u),
\]
where
\[
G_1(u)=\sum^m_{i=1}\vp_{t,i}u_i+\gamma_t\sum^m_{i=1} u_i^q,\ G_2(u)=\left(\sum^m_{i=1}u_i^p\right)^{q/p}-\sum^m_{i=1}u_i^q.
\]
$G_1$ is a concave function in $\Omega$, $G_2\geq 0$, and $G_2=0$ if and only if $\sum_{i,j=1,\ldots,m,i\neq j}|u_iu_j|=0$. Here we  use that $\frac{q}{p}\in[1,\frac{1}{p}]$ and the fact that
\[
\left(\sum^m_{i=1}a_i\right)^r\geq \sum^m_{i=1}a_i^r,
\]
for each $a_i\geq 0$, $r>1$, and equality holds if and only if $a_ia_j=0$ for all $i,j=1,\ldots,m$, $i\neq j$. Then we deduce that
\[
G_t(u)\geq G_1(u),
\]
and equality holds if and only if $\sum_{i,j=1,\ldots,m,i\neq j}|u_iu_j|=0$.

If $I^0(t)=\emptyset$ and $I^+(t)=\emptyset$, i.e. $\rho^{1-q}|\vp_{t,i}|<\gamma_t$ for $i=1,\ldots,m$, we have
\[
\vp_{t,i}u_i+\gamma_t u_i^q>0\ \text{for}\ u_i\in]0,\rho_i],
\]
where $u = u(t)$. Therefore $G_1$ attains its unique minimum at $(0,\ldots,0)$ and $G_2(0,\ldots,0)=0$. Consequently $\bar u_i(t)=0$ for $i=1,\ldots,m$.

If $I^+(t)=\emptyset$, we have
\[
\vp_{t_i}u_i+\gamma_t u_i^q>0\ \text{for}\ u_i\in]0,\rho],\ i\in I^-(t),\ \text{and}\ \vp_{t,j}u_j+\gamma_t u_j^q\geq 0\ \text{for}\ u_j\in[0,\rho_j],\ j\in I^0(t).
\]
Moreover for $j\in I^0(t)$, the expression   $\vp_{t,j}u_j+\gamma_t u_j^q$ attains its minimum in $[0,\rho_j]$ at $0$ and $\rho_j$ if $q<1$, and $\vp_{t,j}u_j+\gamma_t u_j^q\equiv 0$ if $q=1$. Therefore,
\[
\bar u_i(t)=0\ \text{for}\ i\in I^-(t),\ \bar u_j(t)\in \{0,\rho_j\}\ \text{for}\ j\in I^0(t),\ q<1,\ \text{and}\ \sum_{j,j'\in I^0(t),j\neq j'}|u_j u_{j'}|=0,
\]
and
\[
\bar u_i(t)=0\ \text{for}\ i\in I^-(t),\ \bar u_j(t) \in [0,\rho_j]\ \text{for}\ j\in I^0(t),\ q=1,\ \text{and}\ \sum_{j,j'\in I^0(t),j\neq j'}|u_j u_{j'}|=0.
\]

If $I^+(t)\neq\emptyset$, we have
\[
\vp_{t,i}u_i+\gamma u_i^q\geq 0\ \text{for}\ u_i\in[0,\rho_i],\ i\in I^-(t)\cup I^0(t),
\]
and $\vp_{t,j}u_j+\gamma u_j^q$ attains its unique minimum in $[0,\rho_j]$ at $\rho_j$ for $j\in I^+(t)$. Thus, for any $u\in\Omega$, we define $\tilde u\in\Omega$ as follows:
\[
\tilde u_i=0\ \text{for}\ i\in I^-(t)\cup I^0(t),\ \text{and}\ \tilde u_i=u_i\ \text{for}\ i\in I^+(t).
\]
If $\tilde u=(0,\ldots,0)$, then for any $j\in I^+(t)$ we set $\hat u\in \Omega$ with
\[
\hat u_j=\rho\ \text{and}\ \hat u_i=0\ \text{for}\ i\neq j,\ i=1,\ldots,m.
\]
Thus  we have
\[
G(u)\geq G_1(u)\geq G_1(\tilde u)>G_1(\hat u)=G(\hat u).
\]
Otherwise if $\tilde u\neq (0,\ldots,0)$ and $u\neq\tilde u$,
\[
G(u)=G_1(u)+\gamma_t G_2(u)> G_1(u)+\gamma_t G_2(\tilde u)\geq G_1(\tilde u)+\gamma_t G_2(\tilde u)=G(\tilde u).
\]
We then deduce that
\[
\bar u_i(t)=0\ \text{for}\ i\in I^-(t)\cup I^0(t).
\]
The proof for the case when $\vp_{t,i}\leq 0$ for $i=1,\ldots,m$ is thus concluded. The other cases when $\vp_{t,i}$ have different signs can be treated analogously.

Now we proceed to look at  the case $q=p$. In this situation, $G_2\equiv 0$ and $G\equiv G_1$. The minimizers of $G_1$ have been analyzed in the previous arguments, and  we therefore arrive at the conclusion.

\smallskip

{\bf Step 2: proof of (iii)}.\\
We turn to analyze the behavior of the coordinates with indices in
$I^+(t)$. In particular in this case $I^+(t)\neq\emptyset$, and consequently by (i) and (ii)
\[
\bar u_i(t)=0,\ \text{for}\ i\in I^-(t)\cup I^0(t).
\]
Therefore,
\begin{equation}\label{defGt+}
G_t(\bar u(t))=\sum^\ell_{\tau=1}\vp_{t,i_\tau}\bar u_{i_\tau}(t)+\gamma_t\left(\sum^\ell_{\tau=1}|\bar u_{i_\tau}(t)|^p\right)^{q/p},
\end{equation}
where $\{i_1,\ldots,i_\ell\}\subset\{1,\ldots,m\}$ is such that $I^+(t)=\{i_1,\ldots,i_\ell\}$.
Then the problem consists in finding the minimizer of the function
\begin{equation}\label{EQtildeG}
\tilde G(w):=\sum^\ell_{\tau=1}\psi_\tau w_\tau + \gamma_t\left(\sum^\ell_{\tau=1}\rho_\tau^p |w_{\tau}|^p\right)^{q/p},\ \text{for}\ w=(w_1,\ldots,w_\ell)\in [-1,1]^\ell,
\end{equation}
where, to simplify notation, we set for $\tau=1,\ldots,\ell$
\begin{equation}\label{EQwpsi}
w_\tau=\frac{u_{i_\tau}}{\rho_{i_\tau}},\ \psi_\tau=\vp_{t,i_\tau}\rho_{i_\tau},\ \text{and}\ \rho_\tau=\rho_{i_\tau}.
\end{equation}
Following the definition of $I^+(t)$, we have
\begin{equation}\label{EQpsi}
|\psi_\tau|\rho_\tau^{-q}>\gamma_t,\ \text{for}\ \tau=1,\ldots,\ell.
\end{equation}
Let $\bar w$ be the minimizer and let us start by considering the case
\[
\psi_\tau<0,\ \text{for all }\ \tau=1,\ldots,\ell.
\]
Then it is trivial to see that
\[
\bar w_\tau\geq 0,\ \text{for all}\ \tau=1,\ldots,\ell.
\]
We aim to prove that the minimizer $\bar w$ is not in the interior of $[0,1]^\ell$. Without loss of generality, we assume that
\[
1\geq \bar w_1\geq \bar w_2\geq\cdots\geq \bar w_\ell\geq 0.
\]
We can therefore limit our attention to the subset
\begin{equation}\label{EQuorder}
\{(w_1,\ldots,w_\ell)\,:\, 1\geq w_1\geq w_2\geq\cdots w_\ell\geq 0\}.
\end{equation}
Note that $\bar w$ can be expressed as $\bar w= (\beta_0 \bar w_1,\beta_1\beta_0 \bar w_1,\ldots,\beta_{\ell-1}\ldots\beta_0 \bar w_1)$ where $\beta_0=1$, and $\beta_\tau\in[0,1],\ \tau=1,\dots,\ell-1$. Moreover $\bar w_1 \in [0,1]$ is a minimizer of the functional
\[
G_{\beta,1}(w_1)=\sum^\ell_{\tau=1}\psi_\tau\beta_{\tau-1}\cdots\beta_0 w_1+\gamma_t\left(\sum^\ell_{\tau=1}\rho_\tau^p\beta_{\tau-1}^p\cdots\beta_0^p\right)^{q/p}w_1^q.
\]
We will exclude the case that $ w_1 \to G_{\beta,1}(w_1)$ assumes a minimum in the interior of $[0,1]$.

Indeed, if such a  minimum $w_1^*$ is attained in the interior of $[0,1]$, then
\[
0=G_{\beta,1}'(w_1^*)=\sum^\ell_{\tau=1}\psi_\tau\beta_{\tau-1}\cdots\beta_0+\gamma_t\left(\sum^\ell_{\tau=1}\rho_\tau^p\beta_{\tau-1}^p\cdots\beta_0^p\right)^{q/p}q(w_1^*)^{q-1}.
\]
Therefore,
\[
G_{\beta,1}(w_1^*)=(1-q)\gamma_t\left(\sum^\ell_{\tau=1}\rho_\tau^p\beta_{\tau-1}^p\cdots\beta_0^p\right)^{q/p}q(w_1^*)^{q}\geq 0.
\]
Note that
\[
G_{\beta,1}(1,0,\ldots,0)=\psi_1+\gamma_t\rho_1^q=\rho_1^q(-|\psi_1|\rho_1^{-q}+\gamma_t)<0,
\]
where \eqref{EQpsi} is applied. Thus,
\[
G_{\beta,1}(w_1^*)>G_{\beta,1}(1,0,\ldots,0),
\]
which contradicts the assumption that $w_1^*$ is the minimizer.
Consequently, the minimum can not be attained in the interior of $[0,1]$ and thus $\bar w_1\in\{0,1\}$. Moreover $G_{\beta,1}(1,0,\ldots,0)<0$ and $G_{\beta,1}(0,\ldots,0)=0$, and thus

\begin{equation}\label{EQw1}
\bar w_1=1.
\end{equation}
We next  claim the following: for $j\in\{2,\ldots,\ell-1\}$, if $\bar w_{j-1}\in\{0,1\}$, then
\begin{equation}\label{EQClaim}
\bar w_{j}\in\{0,1\},
\end{equation}
and verify this statement by induction. If $\bar w_{j-1}=0$, by \eqref{EQuorder} we have
\[
\bar w_{j}=0
\]
as claimed. If $\bar w_{j-1}=1$, then
$
\bar w_\tau=1,\ \text{ for all  } \,\tau=1,\ldots,j-1.
$

To characterize further  $\bar w_j$, we apply the same idea as for determining $\bar w_1$. This time we restrict our attention to the subset
\[
\{(w_j,w_{j+1},\ldots, w_\ell)\,:\,1\geq w_j\geq w_{j+1}\geq \cdots\geq w_\ell\geq 0\},
\]
and note that for the optimal $(\bar w_j,\dots, \bar w_\ell)= (\bar w_j,\beta_j \bar w_j,\ldots, \beta_{\ell-1}\dots\beta_j \bar w_j)$,
where $\beta_\tau\in[0,1]$ for $\tau=1,\ldots,\ell-1$.  We denote for any $w_j\in[0,1]$
\begin{eqnarray*}
G_{\beta,j}(w_j)&=&\sum^{j-1}_{\tau=1}\psi_\tau+\psi_jw_j+\sum^\ell_{\tau=j+1}\psi_\tau\beta_{\tau-1}\cdots\beta_jw_j\\
&&+\gamma_t\left[\sum^{j-1}_{\tau=1}\rho_\tau^p+\rho_j^p w_j^p+\sum^\ell_{\tau=j+1}\rho_\tau^p\beta^p_{\tau-1}\cdots\beta_j^p w_j^p\right]^{q/p},
\end{eqnarray*}
and note that $\bar w_j$ is a minimizer of $G_{\beta,j}$ on $[0,1]$.
If a minimum $w_j^*$ is attained in the interior of $[0,1]$, then
\[
G_{\beta,j}'(w_j^*)=0.
\]
This yields that,
\[
\psi_j+\sum^\ell_{\tau=j+1}\psi_\tau\beta_{\tau-1}\cdots\beta_j+\gamma_t\frac{q}{p}S_j^{q/p-1}\left(\rho_j^p+\sum^\ell_{\tau=j+1}\rho_\tau^p\beta_{\tau-1}^p\cdots\beta_j^p\right)p(w_j^*)^{p-1}=0,
\]
where
\[
S_j=\sum^{j-1}_{\tau=1}\rho_\tau^p+\rho_j^p (w_j^*)^p+\sum^\ell_{\tau=j+1}\rho_\tau^p\beta^p_{\tau-1}\cdots\beta_j^p (w_j^*)^p.
\]
Therefore,
\[
\psi_j+\sum^\ell_{\tau=j+1}\psi_\tau\beta_{\tau-1}\cdots\beta_j=-\gamma_t S_j^{q/p-1}\left(\rho_j^p+\sum^\ell_{\tau=j+1}\rho_\tau^p\beta_{\tau-1}^p\cdots\beta_j^p\right)q(w_j^*)^{p-1}.
\]
By applying the above equality to compute $G_{\beta,j}(w_j^*)$, we obtain
\begin{eqnarray*}
&& G_{\beta,j}(w_j^*)\\
&=& \sum^{j-1}_{\tau=1}\psi_\tau+\left(\psi_j+\sum^\ell_{\tau=j+1}\psi_\tau\beta_{\tau-1}\cdots\beta_j\right)w_j^*+\gamma_t S_j^{q/p}\\
&=& \sum^{j-1}_{\tau=1}\psi_\tau -\gamma_t S_j^{q/p-1}\left(\rho_j^p+\sum^\ell_{\tau=j+1}\rho_\tau^p\beta_{\tau-1}^p\cdots\beta_j^p\right)q(w_j^*)^p+\gamma_t S_j^{q/p}\\
&=& \sum^{j-1}_{\tau=1}\psi_\tau+\gamma_t S_j^{q/p-1}\left[\sum^{j-1}_{\tau=1}\rho_
\tau^p + (1-q)\left(\rho_j^p+\sum^\ell_{\tau=j+1}\rho_\tau^p\beta_{\tau-1}^p\cdots\beta_j^p\right)(w_j^*)^p\right].
\end{eqnarray*}
Using the fact that $q\leq 1$ and $w_j^*>0$, it holds that
\begin{eqnarray*}
G_{\beta,j}(w_j^*)&\geq &\sum^{j-1}_{\tau=1}\psi_\tau+\gamma_t S_j^{q/p-1}\sum_{\tau=1}^{j-1}\rho_\tau^p\\
&>& \sum^{j-1}_{\tau=1}\psi_\tau+\gamma_t \left(\sum^{j-1}_{\tau=1}\rho_\tau^p\right)^{q/p-1}\sum_{\tau=1}^{j-1}\rho_\tau^p\\
&=& \sum^{j-1}_{\tau=1}\psi_\tau+\gamma_t \left(\sum^{j-1}_{\tau=1}\rho_\tau^p\right)^{q/p}\\
&=& G_{\beta,j}(0),
\end{eqnarray*}
which contradicts the assumption that $w_j^*$ is the minimizer.
Consequently, the minimum can not be attained in the interior of $[0,1]$. We then deduce that
\[
\bar w_j\in\{0,1\},
\]
which completes the proof for the claim \eqref{EQClaim}. Together with \eqref{EQw1}, it is deduced that
\[
\bar w_\tau\in\{0,1\},\ \text{for}\ \tau=1,\ldots,\ell,
\]
which concludes the case where $\psi_\tau<0$ for all $\tau=1,\ldots,\ell$.

For the other cases where $\psi_\tau$ is positive for some $\tau\in \{1,\ldots,\ell\}$, $\psi_\tau$ and $w_\tau$ can be replaced by $-\psi_\tau$ and $-w_\tau$  in \eqref{EQtildeG}. Then by following the same arguments as in the previously we can obtain that $-\bar w_\tau\in \{0,1\}$. Therefore we conclude that
\[
\bar w_\tau\in\{0,-\mathop{sgn}(\psi_\tau)\},\ \text{for}\ \tau=1,\ldots,\ell,
\]
with the additional information that $|\bar w_1|=1$.
The definition of $w_\tau$ and $\psi_\tau$ in \eqref{EQwpsi} implies that
\[
\bar u_{i_\tau}\in\{0,-\rho_{i_\tau}\mathop{sgn}(\varphi_{t,i_{\tau}})\},\ \text{for}\ i_\tau\in I^+(t),\ \tau=1,\ldots,\ell,
\]
with the additional information that   $\max_{i\in I^+} \frac{|\bar u_i|}{\rho_i} \neq 0$. This completes the proof of (iii).
\end{proof}

 In Theorem \ref{THMswitching} the  study has been made for the case of  box constraints.  Next we briefly consider the problem under Euclidean norm constraints. In this case, due to the coupling of the coordinates which is inherent to the Euclidean norm,  it appears to be more complicated
 to achieve explicit information on the structure of the minimizers compared to that which was obtained for box constraints.

 We define for $\rho>0$
\begin{equation}\label{eq:k2}
U_2:=\{u=(u_1,\ldots,u_m)\in\R^m\,:\,\sum^m_{i=1}u^2_i\leq \rho^2\}.
\end{equation}

\begin{thm}\label{THMswitching2}
Let $\bar u$ be an optimal control for problem \eqref{ocpb} with $U$ given in \eqref{eq:k2}, let  $\bar y$ be the associated optimal trajectory, and $\vp$  its  associated adjoint state.
Let $I^-(t), I^0(t)$ and $I^+(t)$ be as defined in Theorem \ref{THMswitching}. If for some $t\in ]0,T[$ the cardinality of $I^+(t)$ is less or equal to 1,
  then (i), (ii),  and (iii) of that theorem remain valid.  Otherwise we have
\begin{equation}\label{eqkk20}
\sum_{i=1}^m |\bar u_i(t)|^2 = \rho^2, \text{ for a.e. } t \in ]0,\infty[.
\end{equation}
\end{thm}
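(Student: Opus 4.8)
The plan is to exploit that, by Lemma~\ref{le:k1}, for a.e.\ $t$ the vector $\bar u(t)$ minimizes
\[
G_t(u)=\sum_{i=1}^m\vp_{t,i}u_i+\gamma_t\|u\|_p^q
\]
over $u\in U_2$, where $\vp_{t,i}=\langle f_i(\bar y(t)),\vp(t)\rangle$, $\gamma_t=\gamma e^{-\lambda t}$, and $\rho_i\equiv\rho$ in the definition of the index sets. First I would carry out the sign reduction used in the box case: flipping the sign of a single coordinate $u_i$ changes only the linear term, by $-2\vp_{t,i}u_i$, so any minimizer satisfies $\vp_{t,i}\bar u_i(t)\le0$, and it suffices to analyse $G_t$ on the orthant $O=\{u:\,\mathrm{sgn}(u_i)=-\mathrm{sgn}(\vp_{t,i})\}$, on which the sign pattern is frozen and $G_t(u)=-\sum_i|\vp_{t,i}|\,|u_i|+\gamma_t\|u\|_p^q$.

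The key structural observation is a concavity argument. On the orthant the quasi-norm $\|\cdot\|_p$ with $0<p<1$ is positively homogeneous and superadditive, hence concave, and composing with the concave increasing map $t\mapsto t^{q}$, $q\in(0,1]$, preserves concavity; thus $u\mapsto\|u\|_p^q$ is concave on $O$, and $G_t|_O$ is concave. A concave function on the compact convex set $U_2\cap O$ attains its minimum at an extreme point, and the extreme points of the intersection of the Euclidean ball with an orthant are exactly the origin and the points of the sphere $\{|u|=\rho\}$. This yields, for a.e.\ $t$, the dichotomy $\bar u(t)=0$ or $\sum_{i=1}^m|\bar u_i(t)|^2=\rho^2$. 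To decide between the alternatives I would test single-coordinate competitors: for $i_0\in I^+(t)$ the point $u=-\rho\,\mathrm{sgn}(\vp_{t,i_0})e_{i_0}\in U_2$ gives
\[
G_t(u)=\rho^q\bigl(\gamma_t-|\vp_{t,i_0}|\rho^{1-q}\bigr)<0=G_t(0)
\]
by the very definition of $I^+(t)$. Hence $I^+(t)\neq\emptyset$ forces $\bar u(t)\neq0$, so $|\bar u(t)|=\rho$; in particular \eqref{eqkk20} holds whenever $|I^+(t)|\ge2$, which is one of the two assertions.

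It remains to upgrade the dichotomy to the full conclusions (i)--(iii) when $|I^+(t)|\le1$. For this the plan is a two-coordinate mass-transfer lemma: given active coordinates $i,j$ with $|\vp_{t,i}|\ge|\vp_{t,j}|$ and $j\notin I^+(t)$, moving mass from $j$ to $i$ along the circle $u_i^2+u_j^2=\text{const}$ strictly decreases $G_t$. Parametrising by $\theta$, the penalty factor $\cos^p\theta+\sin^p\theta$ is, for $p<1$, strictly maximal at $\theta=\pi/4$ and minimal at the endpoints, so concentration always lowers the penalty, while the bound $|\vp_{t,j}|\rho^{1-q}\le\gamma_t$ for the unprotected coordinate guarantees that the possible gain in the linear term cannot compensate this decrease. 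Iterating the transfer onto the coordinate of largest $|\vp_{t,\cdot}|$ shows that, when at most one coordinate is protected (lies in $I^+$), every minimizer on the sphere has support of size at most one. If $|I^+(t)|=1$ this identifies $\bar u(t)=-\rho\,\mathrm{sgn}(\vp_{t,i_0})e_{i_0}$, giving (i), (ii) and (iii). If $I^+(t)=\emptyset$, the same argument leaves only the origin and the single $I^0(t)$-coordinate points (which satisfy $G_t=0=G_t(0)$) as candidates, whereas single $I^-(t)$-coordinate points give $G_t>0$; this reproduces the vanishing on $I^-(t)$, the alternative on $I^0(t)$, and the switching relations $\bar u_i\bar u_j=0$.

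I expect the mass-transfer step to be the main obstacle. The difficulty is that the linear part $-|\vp_{t,i}|\cos\theta-|\vp_{t,j}|\sin\theta$ attains its minimum in the interior of $[0,\pi/2]$, so it pulls toward spread-out configurations and genuinely competes with the concentrating tendency of the $\ell^p$ penalty; establishing the strict inequality rigorously requires the quantitative estimate $|\vp_{t,j}|\rho^{1-q}\le\gamma_t$ together with a careful monotonicity analysis of $\theta\mapsto G_t(\theta)$ rather than a soft convexity argument. The boundary power $q=1$, where the one-dimensional functions become affine and the interval-valued alternative in (ii) appears, would have to be treated separately, exactly as in the proof of Theorem~\ref{THMswitching}.
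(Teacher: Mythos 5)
Your treatment of the second assertion, \eqref{eqkk20}, is sound and in fact takes a different route from the paper: you use concavity of $u\mapsto\|u\|_p^q$ on each orthant plus an extreme-point argument, whereas the paper assumes an interior minimizer among the nontrivial coordinates, uses the stationarity condition to compute $G_t(\bar u(t))=\gamma_t(1-q)\bigl(\sum_i|\bar u_i(t)|^p\bigr)^{q/p}\geq 0$, and contradicts the negativity obtained from the same single-coordinate test point you use. Your version is arguably cleaner, but note one imprecision: ``a concave function attains its minimum at an extreme point'' only guarantees that \emph{some} minimizer is extreme, not that $\bar u(t)$ itself is; to conclude $\|\bar u(t)\|_2=\rho$ you should add that along each ray $s\mapsto G_t(sd)$ is strictly concave for $q<1$ (and linear for $q=1$), so that once $\min G_t<0$ no minimizer can sit in the open ball.

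The genuine gap is in the first assertion, the case $|I^+(t)|\leq 1$. There your entire plan rests on the two-coordinate mass-transfer lemma (concentration along circles $u_i^2+u_j^2=\mathrm{const}$ strictly decreases $G_t$ when the donor coordinate is not in $I^+(t)$), which you do not prove and which you yourself identify as the main obstacle: the linear term genuinely pulls toward spread-out configurations, and it is not clear that the condition $|\vp_{t,j}|\rho^{1-q}\leq\gamma_t$ alone controls the trade-off uniformly in the radius $r$ and in the contribution of the remaining coordinates inside $\bigl(C+r^p\cos^p\theta+r^p\sin^p\theta\bigr)^{q/p}$. The paper avoids this entirely with a three-line comparison argument that you are missing: since $U_2\subseteq U_\infty$ when $\rho_i\equiv\rho$, one has $\min_{U_\infty}G_t\leq\min_{U_2}G_t$; when $|I^+(t)|\leq 1$, Theorem \ref{THMswitching} shows that every minimizer of $G_t$ over $U_\infty$ has at most one nonzero coordinate, of magnitude at most $\rho$, hence already lies in $U_2$. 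Therefore the two minimal values coincide, every minimizer over $U_2$ is a minimizer over $U_\infty$, and (i)--(iii) transfer verbatim (including the degenerate $q=1$ alternative in (ii), which you would otherwise have to handle separately). I recommend replacing the mass-transfer step by this containment argument; as written, your proof of the first half is incomplete.
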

\begin{proof}

{ \em{Step 1.}} From \eqref{optimality} we know that for $t\in ]0,\infty[$ a.e., $\bar u(t)$ is the minimizer of the following function
\[
\bar G_t(u):=\sum^m_{i=1} \alpha_i(t)  u_i +\gamma e^{-\lambda t}\|u\|^q_p,\ \text{ for all } \,u\in U_2,
\]
where $\alpha_i(t)=\langle f_i(\bar y(t)),\vp(t)\rangle $.
At first we note  that $U_2$ is a subset of $U_\infty$, if $\rho_i= \rho$ for all $i$, and hence $\min_{u\in U_\infty} \bar G_t(u) \le \min_{u\in U_2} \bar G_t(u)$.  Moreover, if  a minimizer of $\bar G_t$ over $U_\infty$ is contained in $U_2$, then this minimizer is also a minimizer of $\bar G_t$ over $U_2$.
Following this observation, let
 $\bar u(t)$ be a minimizer of $\bar G_t$ over  $U_\infty$ with cardinality
 of $I^+(t) \le 1$. Then by Theorem \ref{THMswitching} all components of $\bar u(t)$ are 0 except for at most one. In case the cardinality of $I^+(t)$ equals one, then there is one non-trivial  coordinate of the control at time $t$ whose norm then equals $\rho$.

 { \em{Step 2.}} Now we turn to the general case (assuming that $I^+$ is nonempty) and prove that the optimal control is necessarily active. Since $I^+(t)$ is non-empty there exists at least one index $\tau$ such that $\gamma_t-|\alpha_\tau(t)|\rho^{1-q}<0$. Setting the value of this coordinate equal to $\rho$ we obtain
 \[
 G(( 0,\ldots0,\rho,0,\ldots,0))=\alpha_\tau(t)\rho+\gamma_t \rho^q=\rho^q(\gamma_t+\alpha_\tau(t)\rho^{1-q})=\rho^q(\gamma_t-|\alpha_\tau(t)|\rho^{1-q})<0,
 \]
 which implies that at least one coordinate of $\bar u$ is nontrivial and $G(\bar u(t)) < 0$. Let $\tilde \ell$ denote the number of nontrivial coordinates of $\bar u$ and without loss of generality assume that these are the $\tilde \ell$ first ones of $\bar{u}(t)$.

 Let us start with the case where $\alpha_i(t)\le 0$ for all $i=1,\ldots,\tilde \ell$. It is trivial to see that $\bar u_i(t)\geq 0$, for $i=1,\ldots, \tilde \ell$ in this case.  We set
\[
\Omega:=\left\{(u_1,\ldots,u_{\tilde \ell})\in\R^{\tilde \ell}\, :\, \sum^{\tilde \ell}_{i=1}u_i^2\leq\rho^2,\ u_i\geq 0,\ i=1,\ldots,\tilde \ell \right\}.
\]
Thus $\bar u(t)\in\Omega$. We prove by contradiction that $\bar u\in\partial\Omega$. If this is not the case, i.e. $\bar u$ is in the interior of $\Omega$, then
\[
\frac{\partial G}{\partial u_i}(\bar u(t))=0,\ i=1,\ldots,\tilde \ell,
\]
from which we deduce that
\[
\alpha_i(t)+\gamma_t\frac{q}{p}\left(\sum^{\tilde \ell}_{j=1}|\bar u_j(t)|^p\right)^{q/p-1} p|\bar u_i(t)|^{p-1}=0,\ i=1,\ldots,{\tilde \ell}.
\]
It follows that
\[
\alpha_i(t) \bar u_i(t)=-\gamma_t q\left(\sum^{\tilde \ell}_{j=1}|\bar u_j(t)|^p\right)^{q/p-1}|\bar u_i(t)|^{p}=0,\ i=1,\ldots,{\tilde \ell}.
\]
Therefore,
\begin{eqnarray*}
G(\bar u(t))&=& \sum^{\tilde \ell}_{i=1}\alpha_i(t) \bar u_i(t) +\gamma_t\left(\sum^{\tilde \ell}_{i=1}|\bar u_i(t)|^p\right)^{q/p}\\
&=& -\gamma_t q\left(\sum^{\tilde \ell}_{i=1}|\bar u_i(t)|^p\right)^{q/p-1}|\bar u_i(t)|^{p}+\gamma_t\left(\sum^{\tilde \ell}_{i=1}|\bar u_i(t)|^p\right)^{q/p}\\
&=& \gamma_t (1-q) \left(\sum^{\tilde \ell}_{i=1}|\bar u_i(t)|^p\right)^{q/p} \; \geq  \;0.
\end{eqnarray*}
Since we already know that $G(\bar u(t)) <0$ this gives a contradiction. Consequently $\bar u(t)\in\partial\Omega$. Since $\bar u_i \neq 0$ for $i=1,\dots,{\tilde \ell}$ this implies that
\begin{equation*}
\sum^{\tilde \ell}_{i=1}\bar u_i^2(t)=\rho^2\ \text{and}\ \bar u_i(t)> 0,  \text{ for }\ i=1,\ldots,{\tilde \ell}.
\end{equation*}
If some of the coordinates of $\alpha$ are such that $\alpha_i(t) \geq 0$, then necessarily $\bar u_i(t) \leq 0$ and, adapting $\Omega$ accordingly, it can again be verified that $\sum_{1}^{l} \bar u_i^2(t) = \rho^2$.
\end{proof}

\section{Sparsity and switching properties: the time-discretized problem}
In this subsection we consider the following linear dynamical system: for $x\in\R^d$,
\begin{equation}\label{dslinear}
\left\{
\begin{array}{ll}
 \dot y(t)=Ay(t)+Bu(t),\\
 y(0)=x,
\end{array}
\right.
\end{equation}
where $A\in\R^{d\times d}$ and $B\in\R^{d\times m}$. Let us recall the optimal control problem: given $x\in\R^d$, consider
\begin{equation}\label{OCJDelta}
\inf\left\{J^\Delta (x,u)\, :\, (y,u)\ \text{satisfies}\ \eqref{dslinear},\ u\in U^\Delta\right\}.
\end{equation}
The cost functional is recalled as follows:
\[
J^\Delta(x,u)=\int^\infty_0 \frac{1}{2}e^{-\lambda t}\|y(t)-y_d\|^2_2dt+\gamma R(u),
\]
where
\[
R(u)=\sum^\infty_{k=0}b_k\left(\sum^m_{i=1}|u_{i,k}|^p\right)^{q/p}\ \text{for}\ u\in U^\Delta.
\]
To investigate the optimality conditions satisfied by the optimal controllers, we introduce firstly the adjoint equation associated to  $(y,u)$ satisfying \eqref{dslinear}:
\begin{equation}\label{Eqadjoint}
\left\{
\begin{array}{ll}
 -\dot \vp(t)=A^T\vp(t)+e^{-\lambda t}(y(t)-y_d) & \text{for}\ t>0,\\
 \lim_{t\ra\infty} \vp(t)=0.
\end{array}
\right.
\end{equation}
{
In the remainder of this section we assume that there exists an adjoint state $\vp$ satisfying \eqref{Eqadjoint}. The following result justifies this assumption. We denote by $\sigma(A)$ the set of eigenvalues of $A$, and $\sigma:=\sup\{\mathop{Re}\,\mu\,:\, \mu\in\sigma(A)\}$. Further we define
\[
L^2_\lambda(0,\infty;\R^d):=\{y\in L^2(0,\infty;\R^d)\}\,:\, e^{\lambda \cdot}y\in L^2(0,\infty;\R^d)\},
\]
with $\|y\|^2_{L^2_\lambda(0,\infty;\R^d)}=\int^\infty_0 e^{\lambda t}|y(t)|^2dt$.

\begin{lem}
If $\lambda>2\max\{0,\sigma\}$, then \eqref{Eqadjoint} admits a unique solution $\vp\in L^2_\lambda(0,\infty;\R^d)$ with $\dot\vp\in L^2_\lambda(0,\infty;\R^d)$.
\end{lem}
\begin{proof}

If $y$ is feasible for \eqref{OCJDelta}, and in particular if it is optimal, then as a consequence of the cost functional and the inhomogeneity in \eqref{Eqadjoint} it follows that
\[
e^{-\lambda t}(y(t)-y_d)\in L^2_\lambda(0,\infty;\R^d).
\]
This suggests to investigate, for arbitrary $g\in L^2_\lambda(0,\infty;\R^d)$, the following equation has a solution:
\begin{equation}\label{adjointg}
\left\{
\begin{array}{ll}
-\dot \vp(t)=A^T\vp(t)+g(t),\ \text{a.e.}\ t>0,\\
\lim_{t\ra\infty}\vp(t)=0.
\end{array}
\right.
\end{equation}
We define
\[
W^{1,2}_\lambda:=\{y\in L^2_\lambda(0,\infty;\R^d)\,:\,\dot y\in L^2_\lambda(0,\infty;\R^d)\},
\]
endowed with $\|y\|^2_{W^{1,2}_\lambda}=\|y\|^2_{L^2_\lambda(0,\infty;\R^d)}+\|\dot y\|^2_{L^2_\lambda(0,\infty;\R^d)}$ as norm.
Note that $y\in W^{1,2}_\lambda$ implies that $t\mapsto y$ is continuous on $[0,\infty)$. Hence $W^{1,2}_{\lambda,0}=\{y\in W^{1,2}_\lambda\,:\, y(0)=0\}$ is well defined. For any $\vp\in W^{1,2}_{\lambda,0}$ we have $e^{\frac{\lambda\cdot}{2}}\vp\in L^2(0,\infty;\R^d)$ and $\frac{d}{dt}\left(e^{\frac{\lambda\cdot}{2}}\vp\right)\in L^2(0,\infty;\R^d)$. This implies that $\lim_{t\ra \infty} e^{\frac{\lambda t}{2}}\vp(t)=0$ and hence $\lim_{t\in\infty}\vp(t)=0$.

By the definition of $\sigma$ there exists a constant $M>0$ such that
\[
\|e^{At}y\|_2\leq M e^{\sigma t}\|y\|_2,\ \text{for all } t\in [0,\infty)\ \text{and}\ y\in\R^d.
\]
We now define the bounded linear operator $T:W^{1,2}_{\lambda,0}\ra L^2_{\lambda}(0,\infty;\R^d)$ by
\[
Ty=\dot y-Ay+\lambda y.
\]
The adjoint operator $T^*: L^2_\lambda(0,\infty;\R^d)\ra \left(W^{1,2}_{\lambda,0}\right)^*$ satisfies
\begin{eqnarray*}
\langle T^*\vp,y\rangle_{\left(W^{1,2}_{\lambda,0}\right)^*,W^{1,2}_{\lambda,0}}&=& \langle \vp,\dot y\rangle_{L^2_\lambda(0,\infty;\R^d)} -\langle\vp, Ay-\lambda y\rangle_{L^2_\lambda(0,\infty;\R^d)}\\
&=& -\langle \dot \vp, y\rangle_{\left(W^{1,2}_{\lambda,0}\right)^*,W^{1,2}_{\lambda,0}} -\langle\lambda\vp,y\rangle_{L^2_\lambda(0,\infty;\R^d)}-\langle A^T\vp -\lambda\vp,y\rangle_{L^2_\lambda(0,\infty;\R^d)}\\
&=& -\langle \dot \vp, y\rangle_{\left(W^{1,2}_{\lambda,0}\right)^*,W^{1,2}_{\lambda,0}}-\langle A^T\vp,y\rangle_{L^2_\lambda(0,\infty;\R^d)},
\end{eqnarray*}
where we use that $y(0)=0$ and $\lim_{t\ra\infty} e^{\frac{\lambda t}{2}}y(t)=0$ in the second equality above. Thus $T^*\vp=-\dot \vp-A^T\vp$ in the variational sense.

Let us next argue that $T$ is surjective. We choose $f\in L^2_\lambda(0,\infty;\R^d)$ arbitrarily and define
\[
y(t)=\int^t_0 e^{(A-\lambda I)(t-s)}f(s)ds.
\]
Then we have
\begin{equation}\label{dsf}
\dot y=Ay-\lambda y+f
\end{equation}
and
\begin{eqnarray*}
\|y\|_{L^2_\lambda(0,\infty;\R^d)}&=& \left[\int^\infty_0 e^{\lambda t}\left|\int^t_0 e^{(A-\lambda I)(t-s)}f(s)ds\right|^2 dt\right]^{1/2}\\
&\leq & M \left[\int^\infty_0 e^{\lambda t}\left(\int^t_0 e^{(\sigma-\lambda)(t-s)}|f(s)|ds\right)^2 dt\right]^{1/2}\\
&=& M \left[\int^\infty_0\left(\int^t_0 e^{\left(\sigma-\frac{\lambda}{2}\right)(t-s)} e^{\frac{\lambda s}{2}}|f(s)|ds\right)^2 dt\right]^{1/2},
\end{eqnarray*}
and by the Young's inequality
\[
\|y\|_{L^2(0,\infty;\R^d)}\leq M\int^\infty_0 e^{\left(\sigma-\frac{\lambda}{2}\right)}dt\|f\|_{L^2_\lambda(0,\infty;\R^d)}=\frac{2M}{\lambda-2\sigma}\|f\|_{L^2_\lambda(0,\infty;\R^d)}.
\]
Moreover, by \eqref{dsf}
\[
\|\dot y\|_{L^2(0,\infty;\R^d)}\leq\left[\frac{2M}{\lambda-2\sigma}(\|A\|+\lambda)+1\right]\|f\|_{L^2_\lambda(0,\infty;\R^d)}
\]
and hence $y\in W^{1,2}_{\lambda,0}$ as desired. Thus $T$ is surjective and is clearly injective, and the same holds for $T^*$. By the closed range theorem there exists a constant $C>0$ such that
\begin{equation}\label{Tstar}
\|\vp\|_{L^2_\lambda(0,\infty;\R^d)}\leq C\|T^*\vp\|_{\left(W^{1,2}_{\lambda,0}\right)^*}\ \text{for all}\ \vp\in L^2_\lambda(0,\infty;\R^d).
\end{equation}
Let $g\in L^2_\lambda(0,\infty;\R^d)$ be arbitrary and choose $\vp\in L^2_\lambda(0,\infty;\R^d)$ such that $T^*\vp=g$. Then by \eqref{Tstar}
\[
\|\vp\|_{L^2_\lambda(0,\infty;\R^d)}\leq C\|g\|_{\left(W^{1,2}_{\lambda,0}\right)^*}\leq C\tilde C\|g\|_{L^2_\lambda(0,\infty;\R^d)},
\]
where $\tilde C$ is the embedding constant of $L^2_\lambda(0,\infty;\R^d)$ into $\left(W^{1,2}_{\lambda,0}\right)^*$. Moreover $\dot \vp=-A^T\vp-g$ and thus
\[
\|\dot \vp\|_{L^2_\lambda(0,\infty;\R^d)}\leq (C\tilde C+1)\|g\|_{L^2_\lambda(0,\infty;\R^d)}.
\]
It follows that $\vp\in W^{1,2}_\lambda$ and $\lim_{t\ra\infty}\vp(t)=0$. Thus $\vp$ is the solution to \eqref{adjointg} and the claim of the lemma follows.
\end{proof}
}

Since the controls in $U^\Delta$ are piecewise constant functions, we consider  at first  the optimal control on each time interval $I_k$, $k\in\N$.
\begin{prop}\label{propminIk}
Assume that $\lambda>\max\{0,2\sigma\}$. Let $\tilde u\in U^\Delta$ satisfy the following: for any $k\in\N$, $\tilde u(\cdot)\equiv\tilde u_k$ in $I_k$ and
\begin{equation}\label{minIk}
\tilde u_k\in\mathop{arg\,min}_{u=(u_1,\ldots,u_m)\in U}\left\{\int_{I_k}\langle\tilde \vp(t), B u \rangle dt +\gamma b_k\|u\|_p^q\right\},
\end{equation}
where $\tilde y$ is the corresponding trajectory and $\tilde \vp$ is the adjoint state associated to $(\tilde y, \tilde u)$.
Further  for any arbitrary $\omega\in\R^m$ such that $\omega+\tilde u_k\in U$, we define the perturbed control
\[
u^\omega(t):=\left\{
\begin{array}{ll}
 \tilde u_k+\omega & \text{if}\ t\in I_k,\\
 \tilde u(t) & \text{otherwise}.
\end{array}
\right.
\]
Then it holds that $J^\Delta(x,u^\omega)\geq J^\Delta (x,\tilde u)$.
\end{prop}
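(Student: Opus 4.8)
The plan is to exploit that the state enters the cost quadratically while the dynamics \eqref{dslinear} is linear, so that the first-order variation of the state cost is captured \emph{exactly} by the adjoint, and the second-order remainder is a manifestly nonnegative quadratic term. Fix $k\in\N$ and an admissible $\omega$ (i.e.\ $\tilde u_k+\omega\in U$), let $y^\omega$ be the trajectory of $u^\omega$ and set $\delta y:=y^\omega-\tilde y$. Since $u^\omega\equiv\tilde u$ on $[0,t_k)$, we have $\delta y\equiv 0$ on $[0,t_k]$, and on $[t_k,\infty)$ the difference solves the linear equation $\dot{\delta y}=A\,\delta y+B(u^\omega-\tilde u)$, where $u^\omega-\tilde u=\omega$ on $I_k$ and $0$ on $[t_{k+1},\infty)$. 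If $J^\Delta(x,u^\omega)=+\infty$ the claim is trivial, so I may assume $J^\Delta(x,u^\omega)<\infty$; together with finiteness of $J^\Delta(x,\tilde u)$ this yields $\int_{t_k}^\infty e^{-\lambda t}\|\delta y(t)\|_2^2\,dt<\infty$.

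First I would record the control-cost increment, which only sees the $k$-th interval, namely $R(u^\omega)-R(\tilde u)=b_k\bigl(\|\tilde u_k+\omega\|_p^q-\|\tilde u_k\|_p^q\bigr)$. For the state cost, expanding the square and using $\delta y\equiv 0$ before $t_k$ gives
\[
\tfrac12\int_0^\infty e^{-\lambda t}\bigl(\|y^\omega-y_d\|_2^2-\|\tilde y-y_d\|_2^2\bigr)\,dt=\int_{t_k}^\infty e^{-\lambda t}\langle\tilde y-y_d,\delta y\rangle\,dt+\tfrac12\int_{t_k}^\infty e^{-\lambda t}\|\delta y\|_2^2\,dt.
\]
The core step is to rewrite the linear term via the adjoint equation \eqref{Eqadjoint}: substituting $e^{-\lambda t}(\tilde y-y_d)=-\dot{\tilde\vp}-A^T\tilde\vp$ and integrating by parts on $[t_k,\infty)$, the boundary contribution at $t_k$ vanishes because $\delta y(t_k)=0$, the term at infinity vanishes by $\lim_{t\to\infty}\tilde\vp(t)=0$ combined with the decay enforced by the discount factor, and the $A^T$-terms cancel against the $\langle\tilde\vp,A\,\delta y\rangle$ coming from $\dot{\delta y}$. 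What survives is $\int_{t_k}^\infty\langle\tilde\vp,B(u^\omega-\tilde u)\rangle\,dt=\int_{I_k}\langle\tilde\vp,B\omega\rangle\,dt$. Equivalently, one may avoid the boundary term entirely by inserting the Duhamel formula $\delta y(t)=\int_{I_k\cap[t_k,t]}e^{A(t-s)}B\omega\,ds$ and applying Fubini, which identifies the inner time-integral with $\tilde\vp(s)$ through the variation-of-constants representation of \eqref{Eqadjoint}.

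Writing $\int_{I_k}\langle\tilde\vp,B\omega\rangle\,dt=\int_{I_k}\langle\tilde\vp,B(\tilde u_k+\omega)\rangle\,dt-\int_{I_k}\langle\tilde\vp,B\tilde u_k\rangle\,dt$ and combining with the control increment, I would obtain
\[
J^\Delta(x,u^\omega)-J^\Delta(x,\tilde u)=\Phi_k(\tilde u_k+\omega)-\Phi_k(\tilde u_k)+\tfrac12\int_{t_k}^\infty e^{-\lambda t}\|\delta y\|_2^2\,dt,
\]
where $\Phi_k(u):=\int_{I_k}\langle\tilde\vp(t),Bu\rangle\,dt+\gamma b_k\|u\|_p^q$ is precisely the functional minimized in \eqref{minIk}. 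Since $\tilde u_k+\omega\in U$, the minimality \eqref{minIk} gives $\Phi_k(\tilde u_k+\omega)\geq\Phi_k(\tilde u_k)$, and the remaining integral is nonnegative; hence $J^\Delta(x,u^\omega)\geq J^\Delta(x,\tilde u)$.

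The main obstacle is the rigorous vanishing of the boundary term at infinity in the integration by parts (equivalently, the absolute-integrability justification of Fubini in the Duhamel route): there one must combine $\tilde\vp(t)\to 0$ with the discounted finiteness $\int_{t_k}^\infty e^{-\lambda t}\|\delta y\|_2^2\,dt<\infty$ and, if needed, the explicit decay rate of $\tilde\vp$ read off from the variation-of-constants formula for \eqref{Eqadjoint}. Everything else — the quadratic expansion, the cancellation of the $A^T$-terms, and the final comparison through \eqref{minIk} — is routine.
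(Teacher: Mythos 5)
Your proposal is correct and follows essentially the same route as the paper's proof: the adjoint-based identity $\int_0^\infty e^{-\lambda t}\langle \tilde y-y_d,\,y^\omega-\tilde y\rangle\,dt=\int_{I_k}\langle\tilde\vp,B\omega\rangle\,dt$ (obtained there by integrating $\frac{d}{dt}\langle\tilde\vp,y^\omega-\tilde y\rangle$, which is your integration by parts), the quadratic expansion of the tracking term, and the comparison with the minimized functional from \eqref{minIk}, leaving the nonnegative remainder $\tfrac12\int e^{-\lambda t}\|y^\omega-\tilde y\|_2^2\,dt$. The only difference is cosmetic (you state an exact identity before invoking minimality, the paper invokes it first), and the decay issue at infinity you flag is treated at the same level of rigor in the paper.
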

\begin{proof}
Let $y^\om$ be the trajectory associated with $u^\om$. Then $y^\om-\tilde y$ satisfies
\[
\left\{
\begin{array}{ll}
 \dot y^\om(t)-\dot{\tilde y}(t)=A(y^\om(t)-y(t))+B\omega\mathbbm{1}_{I_k}(t) & \text{for}\ t\in(0,\infty),\\
 y^\om(0)-\tilde y(0)=0.
\end{array}
\right.
\]
Assumption \eqref{minIk} implies that
\begin{equation}\label{EqminIk}
\int_{I_k}\langle\tilde \vp(t), B (\tilde u_k+\om) \rangle dt +\gamma b_k\|\tilde u_k+\om\|_p^q \geq \int_{I_k}\langle\tilde \vp(t), B \tilde u_k \rangle dt +\gamma b_k\|\tilde u_k\|_p^q.
\end{equation}
By the definition of $u^\om$, \eqref{EqminIk} is equivalent to
\begin{equation}\label{Eqvpomega}
\int_{I_k} \langle \tilde \vp(t),B\om\rangle dt+\gamma R(u^\om)-\gamma R(\tilde u)\geq 0.
\end{equation}
For almost all $t>0$ we obtain,
\begin{eqnarray*}
&&\frac{d}{dt}\langle \tilde \vp(t), y^\om(t)-\tilde y(t)\rangle\\
&=& \langle \dot{\tilde \vp}(t),y^\om(t)-\tilde y(t)\rangle + \langle \tilde \vp(t),\dot y^\om(t)-\dot{\tilde y}(t)\rangle\\
&=& \langle -A^T\tilde \vp(t)-e^{-\lambda t}(\tilde y(t)-y_d), y^\om(t)-\tilde y(t)\rangle+\langle \tilde \vp(t), A(y^\om (t)-\tilde y(t))+B\om \mathbbm{1}_{I_k}(t)\rangle \\
&=& -e^{-\lambda t}\langle \tilde y(t)-y_d, y^\om(t)-\tilde y(t)\rangle +\langle \tilde \vp(t), B\om \mathbbm{1}_{I_k}(t)\rangle.
\end{eqnarray*}
Note that $\lim_{t\ra\infty}\tilde \vp(t)=0$ and $y^\om(0)-\tilde y(0)=0$, and  therefore
\[
\int^\infty_0 \frac{d}{dt}\langle \tilde \vp(t), y^\om(t)-\tilde y(t)\rangle=0.
\]
Consequently we obtain %(j-1)\rho^p+(w_j^*)^p+\sum^\ell_{\tau=j+1}\beta^p_{\tau-1}\cdots\beta_j^p (w_j^*)^p
\[
\int^\infty_0 \left[-e^{-\lambda t}\langle \tilde y(t)-y_d, y^\om(t)-\tilde y(t)\rangle +\langle \tilde \vp(t), B\om \mathbbm{1}_{I_k}(t)\rangle\right]dt=0,
\]
i.e.,
\begin{equation}\label{EqBomega}
\int^\infty_0 e^{-\lambda t}\langle \tilde y(t)-y_d, y^\om(t)-\tilde y(t)\rangle dt=\int_{I_k} \langle \tilde \vp(t), B\om\rangle.
\end{equation}
To compute the left-hand side of \eqref{EqBomega}, we have for every $t>0$
\begin{equation}\label{Eqsquare}
\|y^\om(t)-y_d\|^2_2-\|\tilde y(t)-y_d\|^2_2=\|y^\om(t)-\tilde y(t)\|^2_2+2\langle y^\om(t)-\tilde y(t),\tilde y(t)-y_d\rangle,
\end{equation}
and now \eqref{Eqvpomega}, \eqref{EqBomega} and \eqref{Eqsquare} imply that
\begin{align*}
&&\int^\infty_0 \frac{1}{2}e^{-\lambda t}\|y^\om(t)-y_d\|^2_2dt- \int^\infty_0 \frac{1}{2}e^{-\lambda t}\|\tilde y(t)-y_d\|^2_2dt \\ &&-\int^\infty_0 \frac{1}{2}e^{-\lambda t}\|y^\om(t)-\tilde y(t)\|^2_2dt+\gamma R(u^\om)-\gamma R(\tilde u)\geq 0.
\end{align*}
Then we deduce that%(j-1)\rho^p+(w_j^*)^p+\sum^\ell_{\tau=j+1}\beta^p_{\tau-1}\cdots\beta_j^p (w_j^*)^p
\[
J^\Delta (x,u^\om)-J^\Delta (x,\tilde u)\geq \int^\infty_0 \frac{1}{2}e^{-\lambda t}\|y^\om(t)-\tilde y(t)\|^2_2dt \geq 0,
\]
which ends the proof.
\end{proof}
Proposition \ref{propminIk} provides the way to construct optimal controls on each $I_k$, and this  procedure can be naturally extended to construct globally optimal controls.
\begin{thm}\label{thmminimizer}
Assume that $\lambda>\max\{0,2\sigma\}$. Let $\bar u\in U^\Delta$ satisfy the following: for any $k\in \N$ and $t\in I_k$,
\begin{equation}\label{umin}
\bar u(t)\in\mathop{arg\,min}_{u=(u_1,\ldots,u_m)\in U}\left\{\int_{I_k}\langle\bar \vp(t), B u \rangle dt +\gamma b_k\|u\|_p^q\right\},
\end{equation}
where $\bar y$ is the corresponding trajectory and $\bar \vp$ is the adjoint state associated to $(\bar y, \bar u)$.
Then $\bar u \in U^\Delta$ is a minimizer of problem \eqref{ocpdtd}, i.e.
\[
J^\Delta (x,u)\geq J^\Delta (x,\bar u),\ \forall\,u\in U^\Delta.
\]
\end{thm}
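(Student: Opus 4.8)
The plan is to reproduce the variational computation of Proposition~\ref{propminIk}, but to carry out the perturbation simultaneously on \emph{all} the intervals $I_k$ rather than on a single one. Fix an arbitrary $u\in U^\Delta$, write $u\equiv u_k$ on $I_k$, let $y$ be the associated trajectory, and set $\om_k:=u_k-\bar u_k$, so that $\om(t):=\sum_{k\in\N}\om_k\mathbbm{1}_{I_k}(t)=u(t)-\bar u(t)$ and $\bar u_k+\om_k=u_k\in U$ for every $k$. The key point is that $\bar\vp$, the adjoint state of the reference pair $(\bar y,\bar u)$, is used unchanged throughout; this is what allows the global argument to go through without the telescoping difficulties that would arise if one tried to iterate Proposition~\ref{propminIk} interval by interval (after a single-interval perturbation the new control no longer satisfies \eqref{umin} on the already-modified interval, so the proposition could not be reapplied).

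First I would establish the global analogue of \eqref{EqBomega}. Differentiating $t\mapsto\langle\bar\vp(t),y(t)-\bar y(t)\rangle$ and using the adjoint equation \eqref{Eqadjoint} for $(\bar y,\bar u)$ together with $\dot y-\dot{\bar y}=A(y-\bar y)+B\om$ gives, exactly as in the proof of the proposition,
\[
\frac{d}{dt}\langle\bar\vp(t),y(t)-\bar y(t)\rangle=-e^{-\lambda t}\langle\bar y(t)-y_d,y(t)-\bar y(t)\rangle+\langle\bar\vp(t),B\om(t)\rangle.
\]
Integrating over $]0,\infty[$, using $y(0)-\bar y(0)=0$ and $\lim_{t\to\infty}\bar\vp(t)=0$, the boundary terms vanish and one obtains
\[
\int_0^\infty e^{-\lambda t}\langle\bar y(t)-y_d,y(t)-\bar y(t)\rangle\,dt=\sum_{k\in\N}\int_{I_k}\langle\bar\vp(t),B\om_k\rangle\,dt.
\]

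Next I would invoke the interval-wise optimality \eqref{umin}: since $\bar u_k+\om_k\in U$, comparing the values of the functional in \eqref{umin} at $\bar u_k+\om_k$ and at $\bar u_k$ yields, for each $k$,
\[
\int_{I_k}\langle\bar\vp(t),B\om_k\rangle\,dt+\gamma b_k\big(\|u_k\|_p^q-\|\bar u_k\|_p^q\big)\ge0,
\]
and summing over $k\in\N$ gives $\sum_k\int_{I_k}\langle\bar\vp,B\om_k\rangle\,dt+\gamma\big(R(u)-R(\bar u)\big)\ge0$. Finally, combining this with the previous identity and the pointwise expansion $\|y-y_d\|_2^2-\|\bar y-y_d\|_2^2=\|y-\bar y\|_2^2+2\langle y-\bar y,\bar y-y_d\rangle$ (integrated against $\tfrac12 e^{-\lambda t}$) collapses the cross term and produces
\[
J^\Delta(x,u)-J^\Delta(x,\bar u)\ge\int_0^\infty\tfrac12 e^{-\lambda t}\|y(t)-\bar y(t)\|_2^2\,dt\ge0,
\]
which is the desired inequality.

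The step I expect to require the most care is the passage to the infinite horizon: justifying that the boundary term $\langle\bar\vp(t),y(t)-\bar y(t)\rangle$ really vanishes as $t\to\infty$ and that the series $\sum_k\int_{I_k}\langle\bar\vp,B\om_k\rangle\,dt$ converges absolutely so that summing the interval-wise inequalities is legitimate. Both reduce to the decay of $\bar\vp$ guaranteed by \eqref{Eqadjoint} together with the boundedness of $y-\bar y$ coming from $u,\bar u\in L^\infty(0,\infty;U)$ with $U$ compact, and are handled exactly as the corresponding estimates in Proposition~\ref{propminIk}; apart from this, the argument is a verbatim globalization of that proof.
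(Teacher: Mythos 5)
Your proposal is correct, but it proves the theorem by a genuinely different route than the paper. The paper iterates Proposition~\ref{propminIk}: it defines intermediate controls $u^n$ that agree with $\bar u$ on $I_0,\dots,I_n$ and with $u$ elsewhere, obtains the monotone chain $J^\Delta(x,u)\geq J^\Delta(x,u^0)\geq\cdots\geq J^\Delta(x,u^n)$ by applying the proposition once per interval, and then passes to the limit using pointwise convergence of the trajectories $y^n\to\bar y$ and Fatou's lemma. You instead perform a single global perturbation $\omega=u-\bar u$ on all intervals simultaneously, keeping the fixed adjoint $\bar\vp$ of $(\bar y,\bar u)$ throughout, and sum the interval-wise inequalities coming from \eqref{umin}. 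Your version buys two things: it dispenses with the approximating sequence and with Fatou's lemma altogether, and it sidesteps the delicate point you correctly identified, namely that the intermediate controls $u^{n}$ have trajectories and hence adjoint states different from $\bar\vp$, so that the literal hypothesis \eqref{minIk} of Proposition~\ref{propminIk} (which is stated relative to the adjoint of the control being perturbed) is not satisfied by them --- a point the paper's proof glosses over. The price you pay is having to justify the absolute convergence of $\sum_k\int_{I_k}\langle\bar\vp,B\omega_k\rangle\,dt$ and the vanishing of the boundary term $\langle\bar\vp(t),y(t)-\bar y(t)\rangle$ as $t\to\infty$; as you note, these reduce to the decay of $\bar\vp$ and the boundedness of the control set, and are treated at exactly the same level of rigor as the corresponding step in the paper's own proof of Proposition~\ref{propminIk} (the finiteness of $R(u)$ and $R(\bar u)$ is automatic from the compactness of $U$ and $\sum_k b_k=1/\lambda$). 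So the argument is sound and, if anything, tighter than the published one.
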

\begin{proof}
For any $u\in U^\Delta$, we define a sequence $(u^n)_{n\in\N}$ by
\[
u^n(t):=\left\{
\begin{array}{ll}
 \bar u(t) & \text{if}\ t\in I_k,\ k=0,\ldots,n,\\
 u(t) & \text{otherwise}.
\end{array}
\right.
\]
Therefore, $u^n\ra \bar u$ pointwise in $[0,\infty[$. Let $y^n$ be the trajectory associated with $u^n$. By the same argument as in Theorem \ref{thmexistence}, we deduce that
\[
y^n\ra \bar y\ \text{pointwise in}\ [0,\infty[.
\]
Assumption \eqref{umin} and Proposition \ref{propminIk} imply that
\[
J^\Delta (x,u)\geq J^\Delta (x,u^0)\geq J^\Delta (x,u^1)\geq \cdots \geq J^\Delta (x,u^n),\ \forall\,n\in\N.
\]
By Fatou's Lemma,
\begin{eqnarray*}
J^\Delta (x,u)&\geq& \mathop{\lim\,\inf}_{n\ra\infty} \left\{\int^\infty_0 \frac{1}{2} e^{-\lambda t}\|y^n(t)-y_d\|^2_2 dt +\gamma R(u^n)\right\}\\
&\geq& \int^\infty_0 \frac{1}{2} e^{-\lambda t}\|\bar y(t)-y_d\|^2_2 dt +\gamma R(\bar u)\\
&=& J^\Delta (x,\bar u),
\end{eqnarray*}
and  we conclude that $\bar u$ is a minimizer of problem \eqref{ocpdtd}.
\end{proof}

Based on the optimality conditions \eqref{umin}, similar results on sparsity and switching properties as Theorem \ref{THMswitching} can be deduced by the same arguments as in the proof of Theorem \ref{THMswitching}.
\begin{thm}
Following the same assumptions and notations in Theorem \ref{thmminimizer}, we set
\[
\varphi_k=\int_{I_k}B^T{\color{black}\bar \varphi(t)}dt,\ \gamma_k=\gamma b_k.
\]
For each $k\in\N$, we define the following index sets:
\[
I^-_k=\{i\in\{1,\ldots,m\}\,:\, |\vp_{k,i}|\rho_i^{1-q}<\gamma_k \},
\]
\[
I^0_k=\{i\in\{1,\ldots,m\}\,:\, |\vp_{k,i}|\rho_i^{1-q}=\gamma_k \},
\]
\[
I^+_k=\{i\in\{1,\ldots,m\}\,:\, |\vp_{k,i}|\rho_i^{1-q}>\gamma_k \},
\]
The following properties hold:
\begin{enumerate}[(i)]
\item
For $k\in\N$, $t\in I_k$ and $i\in I^-_k$,
\[
\bar u_i(t)=0.
\]
\item
For $k\in\N$, $t\in I_k$ and $i\in I^0_k$,
\[
\left\{
\begin{array}{lllll}
\bar u_i(t)=0, & \text{if}\ I^+_k\neq\emptyset,\\[1.7ex]
\bar u_i(t)\in\{0,-\rho_i\mathop{sgn}(\vp_{k,i})\}, & \\
\bar u_i(t) \bar u_{j}(t)=0,\ i,j\in I^0_k,\ i\neq j, & \text{if}\ I^+_k=\emptyset,\ q\in[p,1[,\\[1.7ex]
\bar u_i(t)\in [0,-\rho_i\mathop{sgn}(\vp_{k,i})], & \\
\bar u_i(t) \bar u_{j}(t)=0,\ i,j\in I^0_k,\ i\neq j, & \text{if}\ I^+_k=\emptyset,\ q=1.
\end{array}
\right.
\]
\item
For $k\in\N$, $t\in I_k$ and $i\in I^+_k$, we have
\[
\bar u_i(t)\in\{0,-\rho_i\mathop{sgn}(\vp_{k,i})\},
\]
and $\max_{i I_k^+}|\bar u_i(t)| \neq 0$.
\end{enumerate}
\end{thm}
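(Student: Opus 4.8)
The plan is to reduce the per-interval minimization appearing in the optimality condition \eqref{umin} to exactly the finite-dimensional minimization already analyzed in Theorem \ref{THMswitching}, and then transplant the argument given there. First I would fix $k\in\N$ and take $U=U_\infty$ as in \eqref{eq:k1}, exploiting that every admissible control in $U^\Delta$ is constant on $I_k$: writing $u\in U$ for the common value, linearity of the inner product lets me pull $u$ out of the integral,
\[
\int_{I_k}\langle\bar\vp(t),Bu\rangle\,dt=\Bigl\langle\int_{I_k}B^T\bar\vp(t)\,dt,\,u\Bigr\rangle=\langle\varphi_k,u\rangle=\sum_{i=1}^m\varphi_{k,i}u_i,
\]
so that the functional in \eqref{umin} becomes
\[
\bar G_k(u):=\sum_{i=1}^m\varphi_{k,i}u_i+\gamma_k\|u\|_p^q,\qquad u\in U_\infty,
\]
with $\gamma_k=\gamma b_k$. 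By Theorem \ref{thmminimizer} the optimal $\bar u$ has the property that, for $t\in I_k$, the constant vector $\bar u(t)$ minimizes $\bar G_k$ over the box $U_\infty$.

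The second step is to observe that $\bar G_k$ is structurally identical to the function $G_t$ from Lemma \ref{le:k1}, the only change being the replacement of the pointwise coefficients $\vp_{t,i}=\langle f_i(\bar y(t)),\vp(t)\rangle$ and the weight $\gamma_t=\gamma e^{-\lambda t}$ by the integrated coefficients $\varphi_{k,i}$ and the weight $\gamma_k$. Under this dictionary the index sets $I^-_k,I^0_k,I^+_k$, defined through the comparison of $|\varphi_{k,i}|\rho_i^{1-q}$ with $\gamma_k$, are precisely the counterparts of $I^-(t),I^0(t),I^+(t)$. Consequently I would invoke verbatim the two-step analysis of Theorem \ref{THMswitching}: Step 1, the decomposition $\bar G_k=G_1+\gamma_k G_2$ with $G_1$ concave and $G_2\ge 0$ vanishing exactly on the switching set, followed by the case distinction on whether $I^+_k$ is empty, yields conclusions (i) and (ii); Step 2, the monotone reordering of the active coordinates, the $\beta_\tau$-reparametrization, and the induction excluding interior minima by comparison with boundary values, yields conclusion (iii), including $\max_{i\in I^+_k}|\bar u_i(t)|\neq 0$.

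Since the analysis proceeds componentwise over the finitely many indices $i\in\{1,\dots,m\}$ and does not refer to $t$ beyond the constant value of the control on $I_k$, every conclusion holds for all $t\in I_k$ simultaneously, which is exactly the $k$-indexed statement. The only point requiring care — and the step I would treat as the genuine, if mild, obstacle — is the reduction in the first paragraph: one must confirm that no information is lost in passing from the global optimality guaranteed by Theorem \ref{thmminimizer} to the per-interval minimization of $\bar G_k$, i.e. that the integrated adjoint $\varphi_k$ faithfully plays the role that the pointwise quantity $\vp_{t,i}$ plays in the continuous setting. This is immediate here because the constant-on-$I_k$ structure of $U^\Delta$ turns the time integral against $\bar\vp$ into the single linear form $\langle\varphi_k,\cdot\rangle$, after which the minimization is genuinely finite-dimensional and identical in form to the one already solved; the substantive case analysis need not be repeated.
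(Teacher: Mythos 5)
Your proposal is correct and coincides with the paper's own (very terse) proof, which simply states that the result follows ``by the same arguments as in the proof of Theorem \ref{THMswitching}'' once the per-interval optimality condition \eqref{umin} is rewritten, exactly as you do, as minimization of $\sum_{i}\varphi_{k,i}u_i+\gamma_k\|u\|_p^q$ over $U_\infty$. One small slip: the per-interval minimization property is the \emph{hypothesis} inherited from the statement of Theorem \ref{thmminimizer} (which proves the converse direction, that per-interval minimization implies global optimality), not a consequence of it, but since that hypothesis is assumed here this does not affect the argument.
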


\section{Numerical experiments}
In this section we present numerical experiments for the computation of optimal control laws for the problem
\begin{equation*}
\underset{u(\cdot)\in L^{\infty}(0,\infty;U_\infty)}{\inf}J(x,u):=\int^{\infty}_0 e^{-\lambda t}\left(\frac{1}{2}\|y(t)-y_d\|^2_2+\gamma\|u(t)\|_p^q\right)dt,
\end{equation*}
constrained to the nonlinear dynamical system
\begin{equation*}
\left\{
\begin{array}{ll}
\dot y(s)=f(y(s),u(s)):=f_0(y(s))+\sum^m_{i=1}f_i(y(s))u_i(s) & \text{in}\ ]0,\infty[,\\
y(0)=x.
\end{array}
\right.
\end{equation*}
For the realization of globally optimal control laws we proceed as in \cite{KKR17}, i.e. by following a dynamic programming approach. The value function $V(x):=\inf J(x,u)$ associated to this infinite horizon optimal control problem satisfies the following first order Hamilton-Jacobi-Bellman equation
\begin{equation*}
\lambda V(x)+\underset{u\in U_\infty}{\sup}\{-f(x,u)\cdot\nabla V(x)-\frac{1}{2}\|x-y_d\|^2_2-\gamma\|u\|_p^q\}=0\,,
\end{equation*}
which leads to the optimal feedback map
\begin{equation} \label{eq6.30}
\bar{u}(x):=\underset{u\in U_\infty}{\text{argmin}}\left\{f(x,u)\cdot\nabla V(x)+\gamma\|u\|_p^q \right\}\,.
\end{equation}
The solution of the Hamilton-Jacobi-Bellman equation and of the optimal feedback mapping are numerically approximated by a first-order semi-Lagrangian scheme with policy iteration as discussed in \cite{Alla15}. The well-posedness of this numerical scheme is guaranteed under boundedness and continuity assumptions for the dynamics $f(x,u)$ and the cost. Convergence of controls, however, is only guaranteed for convex running costs. Nevertheless, the results we report indicate that the semi-Lagrangian scheme converges to optimal controls exhibiting the expected sparsity and switching properties. This scheme has also been applied to the solution of sparse optimal feedback control problems in \cite{Albi17,Falcone14}. In the case $p=q=1$ the minimization operation in \eqref{eq6.30} can be realized by means of semismooth Newton methods as \cite{KKK16}. For different values of $p$ and $q$, the minimizer is chosen by discretizing the control set $U_\infty$ into a finite number of values and making a pointwise evaluation of the Hamiltonian.

\subsubsection*{Eikonal dynamics}

We begin by considering eikonal-type dynamics for planar motion of the form
\begin{align*}
\dot x_1(s)&=u_1(s)\\
\dot x_2(s)&=u_2(s)\,,
\end{align*}
where $|u_i(s)|\leq 0.5$ for $i=1,2.$ The state space is set to be $\Omega=[-1,1]^2$, the discount factor $\lambda=0.2$, and $\gamma=1$. The goal is to drive the state to the origin, and therefore $y_d=(0,0)$. The optimal control fields in the state space for different $p,q$ values are shown in Figure \ref{eikcon}.
\begin{figure}[!ht]
	\centering
	%\subfloat[$p=2, q=2$]{\label{fig:t1a}\includegraphics[width=0.32\textwidth]{controlp2q2}}\\
	\subfloat[$p=1, q=1$]{\label{fig:t1b}\includegraphics[width=0.33\textwidth]{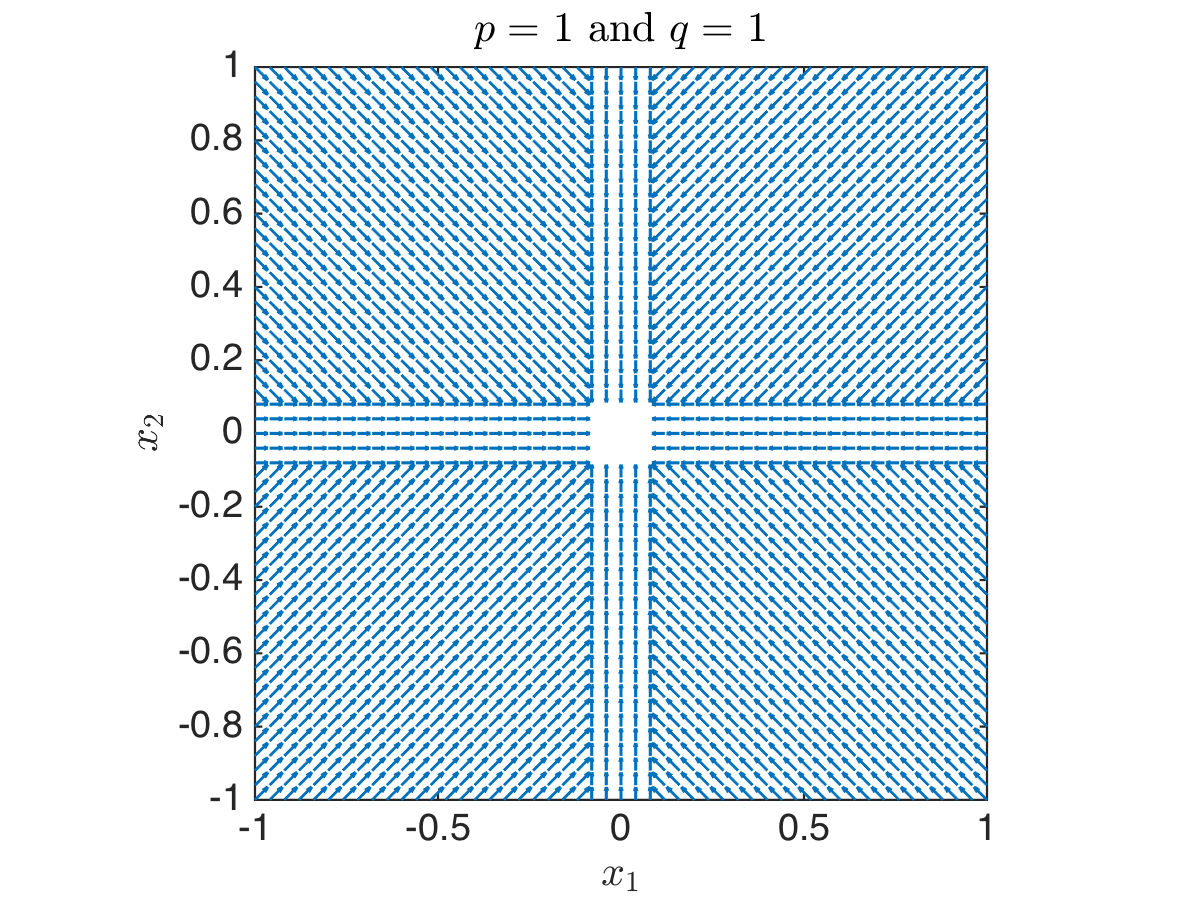}}\hfill
	\subfloat[$p=0.8, q=1$]{\label{fig:t1c}\includegraphics[width=0.33\textwidth]{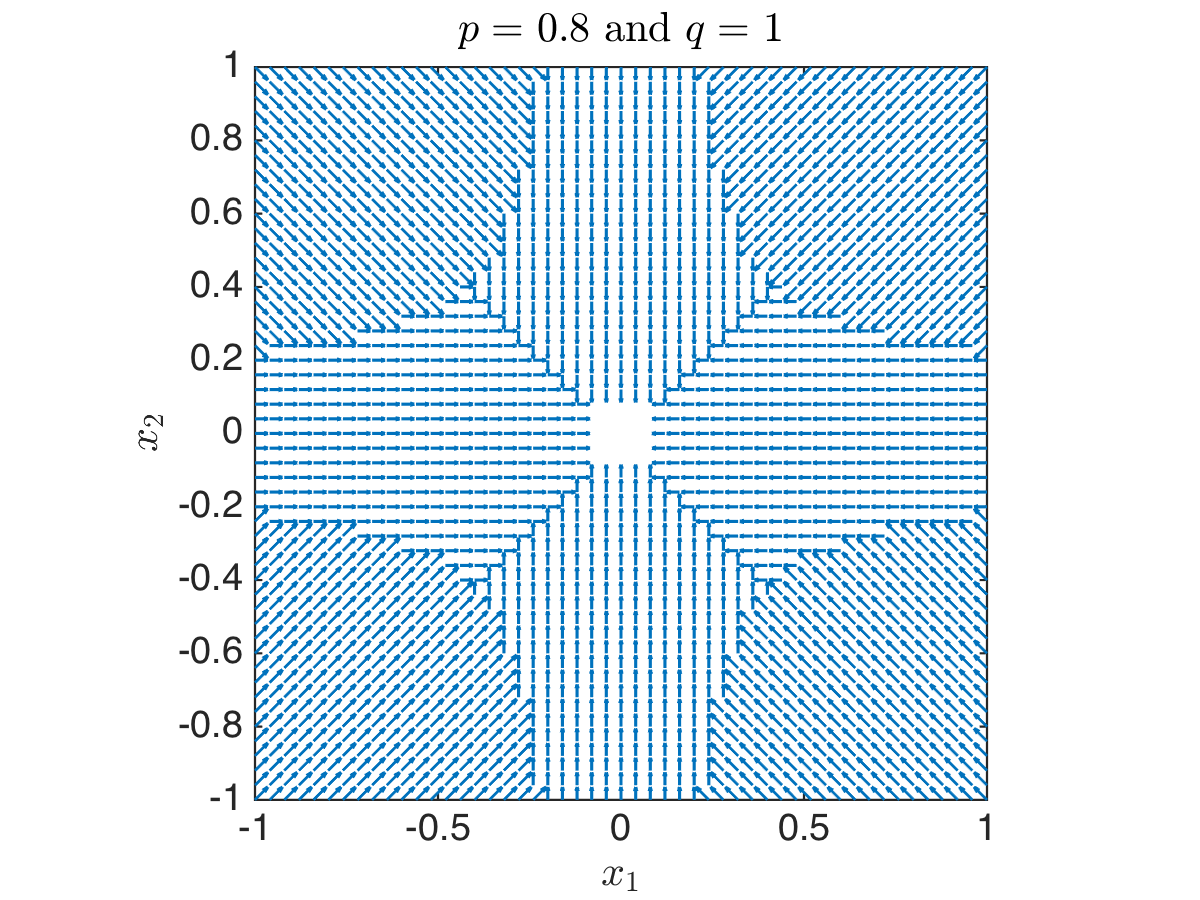}}\hfill
	\subfloat[$p=0.6, q=1$]{\label{fig:t1d}\includegraphics[width=0.33\textwidth]{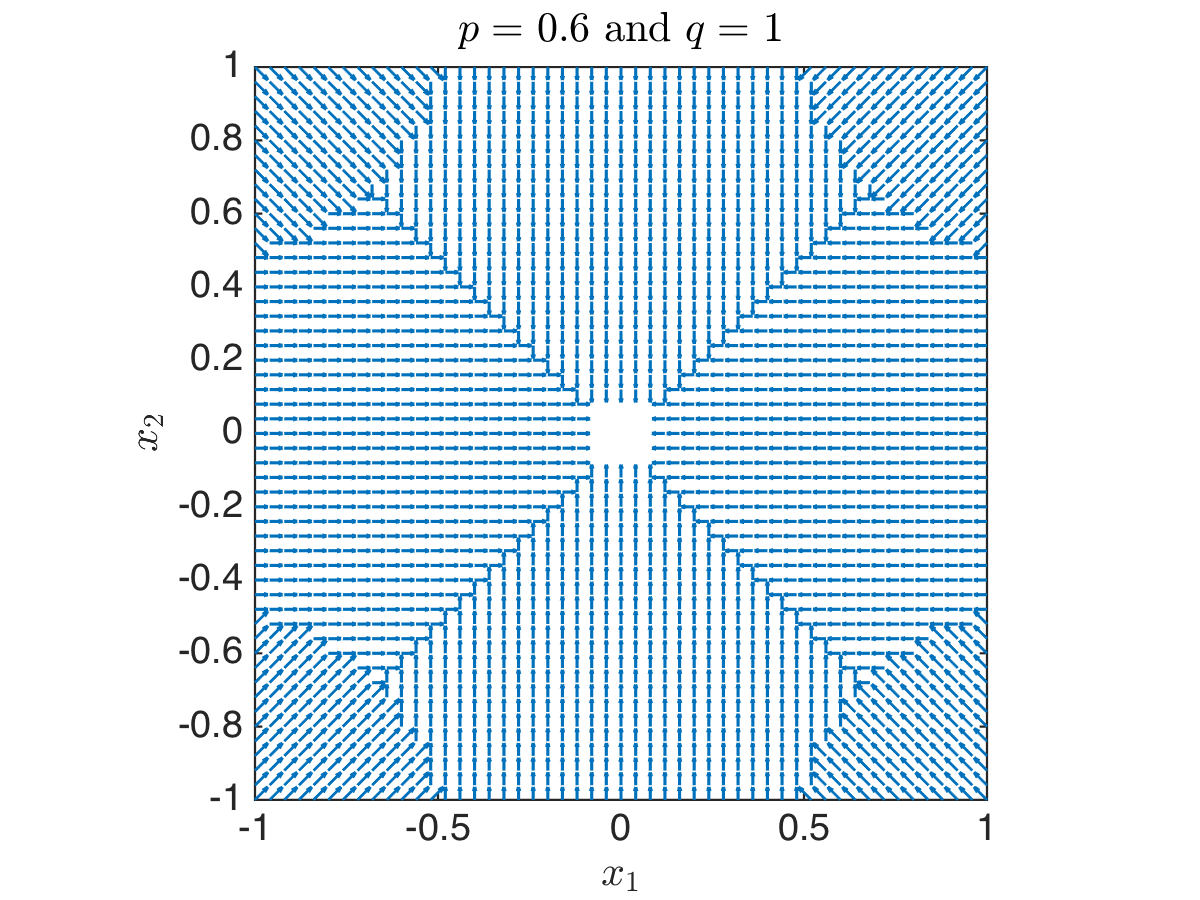}}\\
	\subfloat[$p=0.2, q=0.2$]{\label{fig:t1e}\includegraphics[width=0.33\textwidth]{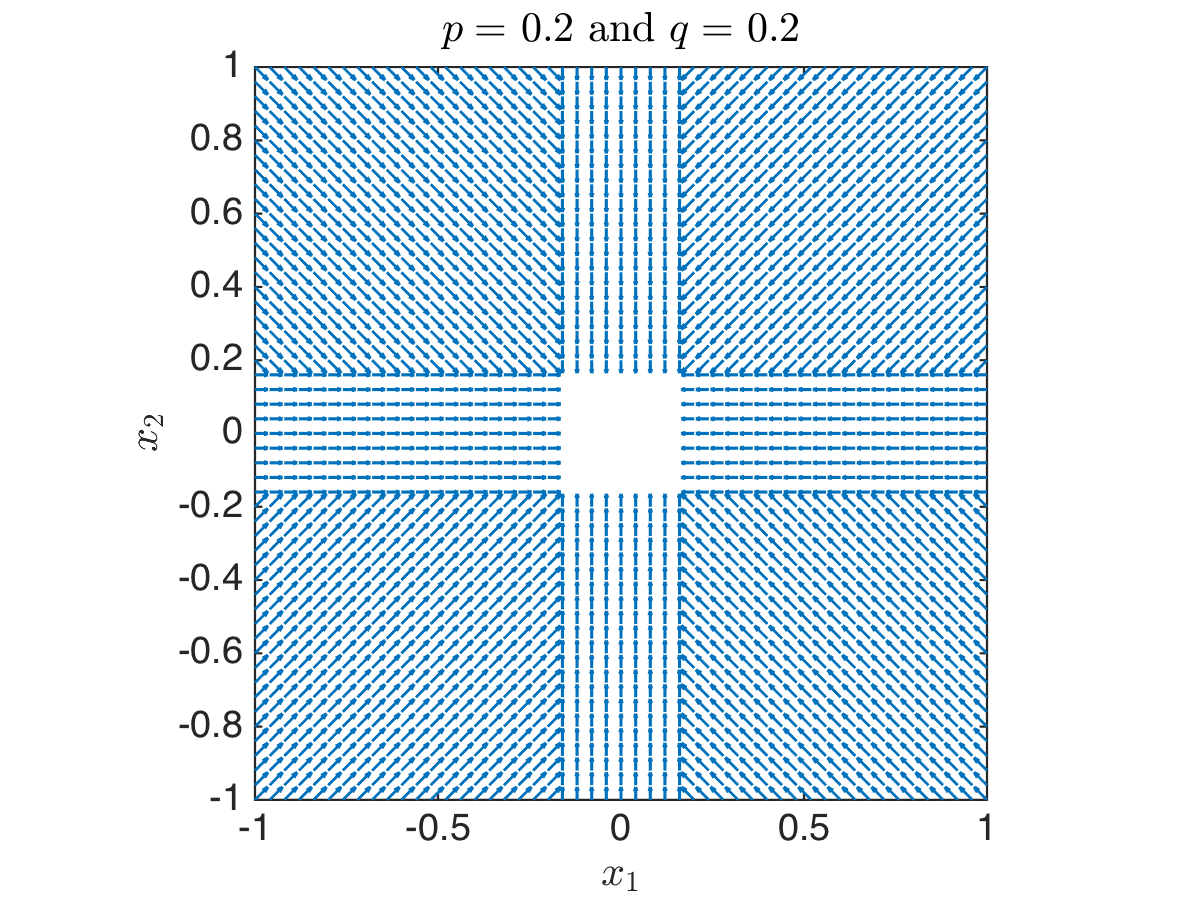}}\hfill
	\subfloat[$p=0.2, q=0.3$]{\label{fig:t1f}\includegraphics[width=0.33\textwidth]{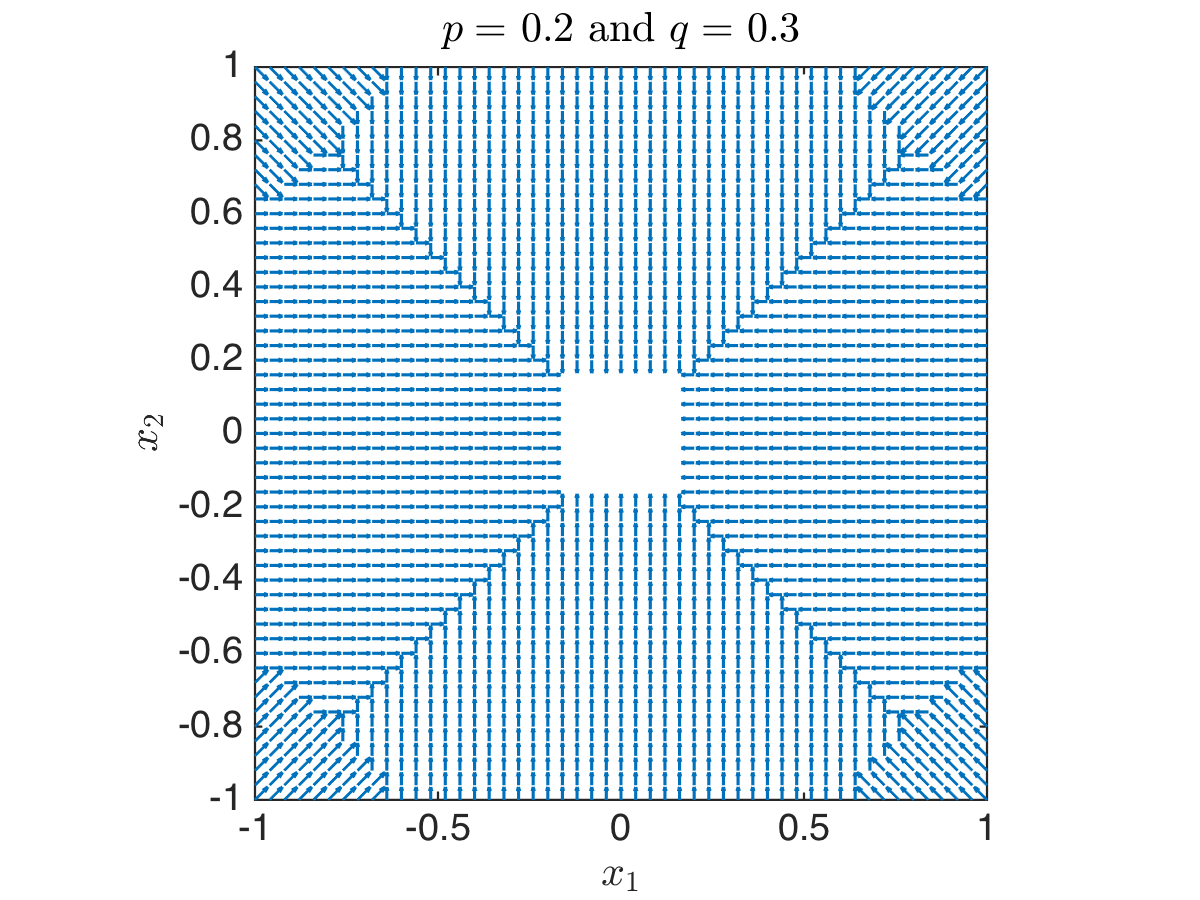}}\hfill
	\subfloat[$p=0.2, q=0.8$]{\label{fig:t1g}\includegraphics[width=0.33\textwidth]{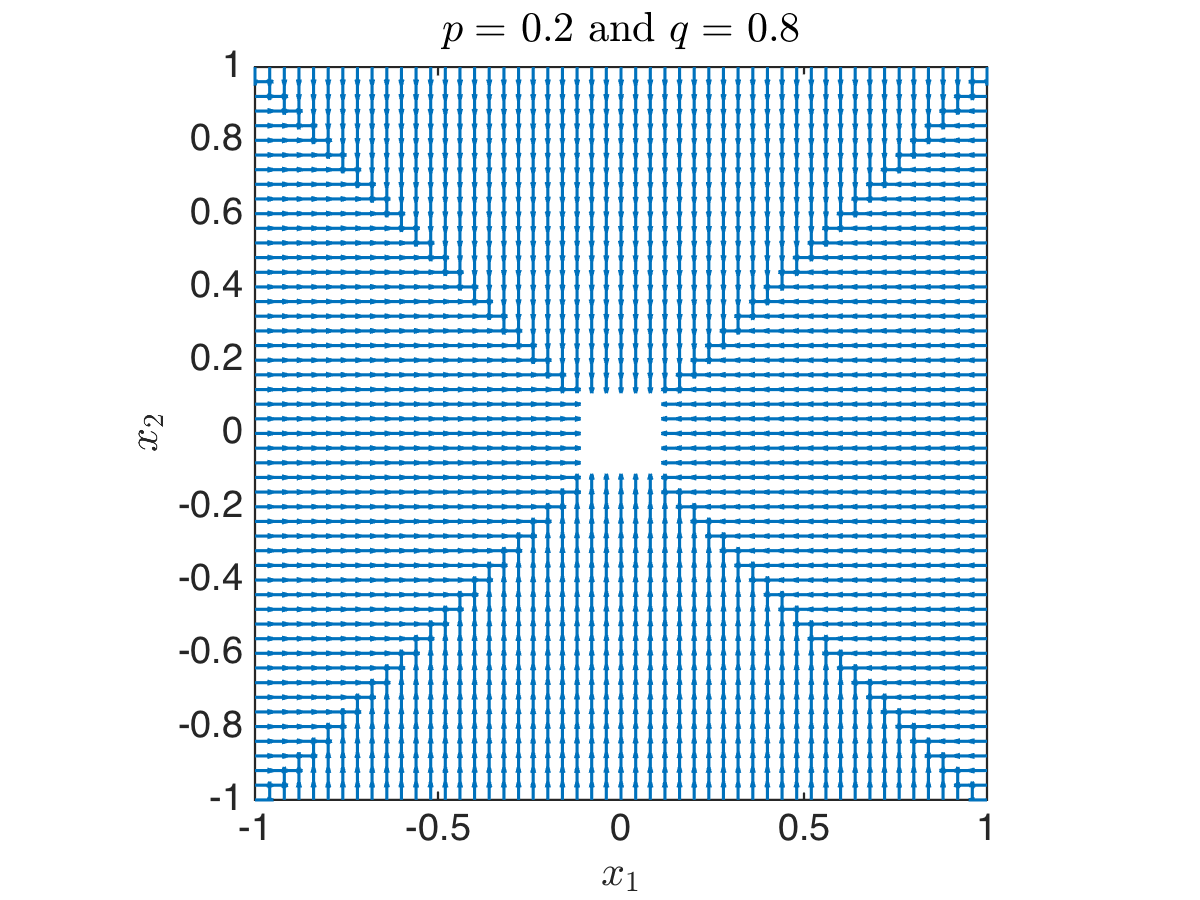}}
	\caption{Eikonal dynamics, optimal control fields for different control penalizations$\|u\|_p^q$.}\label{eikcon}
\end{figure}

We observe the following:
\begin{itemize}
	%\item [a)] The case $p=q=2$, except for the box constraint, corresponds to a linear-quadratic regulator like solution. The optimal control is a direction pointing directly towards the origin, with decreasing magnitude as the initial condition gets closer to the target.
	\item[a)] The  case $p=q=1$ has been already reported in \cite{KKR17}.  There exists a switching band of width $\gamma\lambda$, where the optimal control points unidirectionally towards the origin, and $\bar{u}=0$ for $\|u\|_{\infty}\leq\gamma\lambda$.
	\item[b,c)] Departing from $p=q=1$ and reducing the value of $p$, a switching region with only one active control component arises. It increases as the  ratio $q/p$ increases. Note that for $q=1$, the region where $\bar{u}=0$ remains unchanged.
	\item[d)] The switching and the sparsity regions are larger for $p=q=0.2$ than for $p=q=1$. Only in the particular case $\rho=1$ these regions would remain the same.
	\item[e,f)] Increasing the $q/p$ ratio by departing from smaller values of $q$ generates a larger switching region, leading to a fully switching controller for a ratio of $q/p$ sufficiently large. Note that increasing $q/p$ for $q\neq 1$ also leads to a decrease of the sparsity region.
\end{itemize}

\subsubsection*{Nonlinear dynamics of a double-well potential}
We now address the synthesis of optimal controllers for nonlinear dynamics.  We consider a system corresponding to a single one-dimensional particle moving in a double-well potential, subject to a controlled damping, and a direct external forcing via
\begin{align*}
\dot x(s)&=v(s)\\
\dot v(s)&= -(1+u_1(s))v(s)+(x(s)-x^3(s)) + u_2(s).
\end{align*}
In the absence of control action ($u_1=u_2=0$), the damped particle has two stable equilibrium positions, namely $x=\pm 1,v=0$  (we drop the state-space notation $(x_1,x_2)$ for $(x,v)$), with their corresponding basins of attraction. Here our goal is to steer the particle to the equilibrium $y_d=(1,0)$. We consider a set of initial conditions in $\Omega=[-2,2]^2$, and set $\gamma=0.1$, $\rho=1$, and $\lambda=0.01$. Optimal controls  are shown in Figure \ref{dw1}.
\begin{figure}[!ht]
	\subfloat[$u_1 ,p=1, q=1$]{\label{fig:t2a}\includegraphics[width=0.33\textwidth]{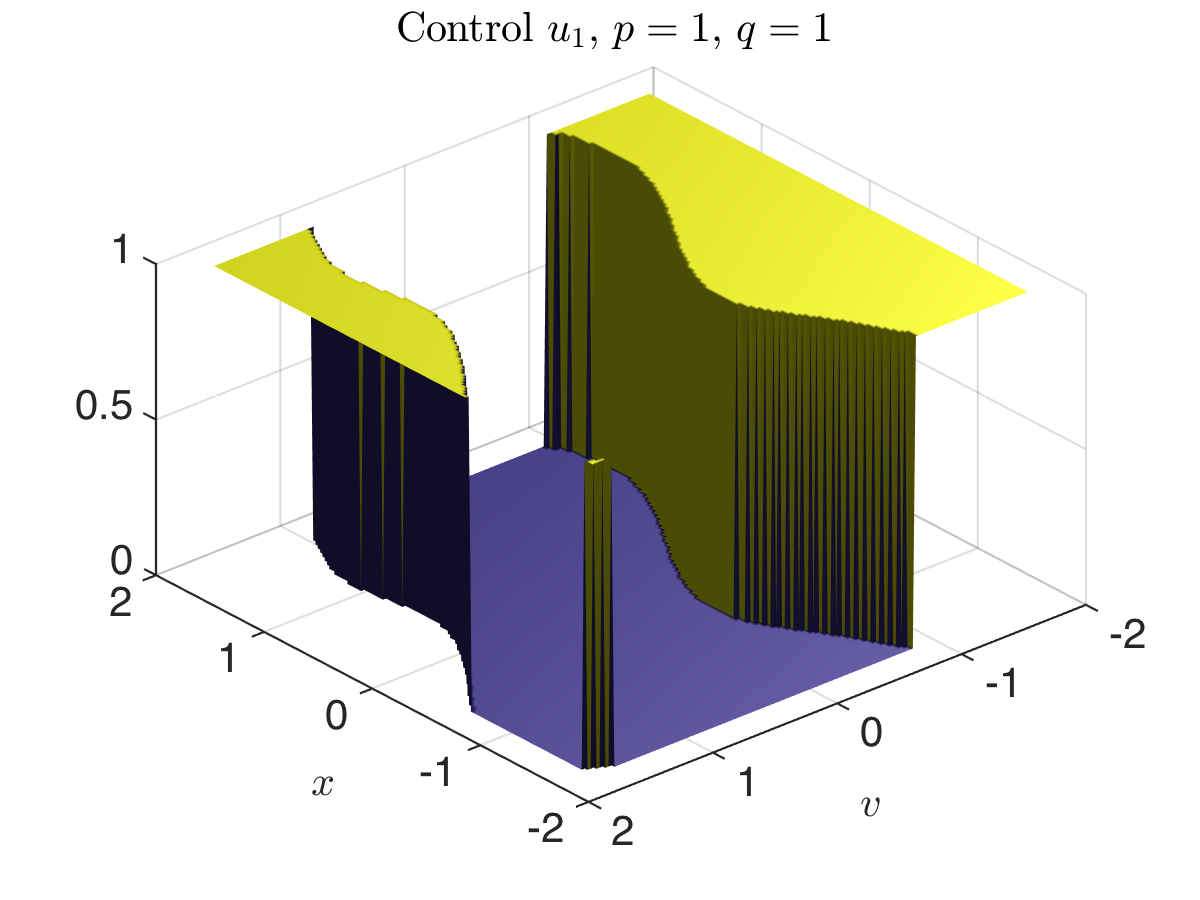}}\hfill
	\subfloat[$u_1, p=0.6, q=1$]{\label{fig:t2b}\includegraphics[width=0.33\textwidth]{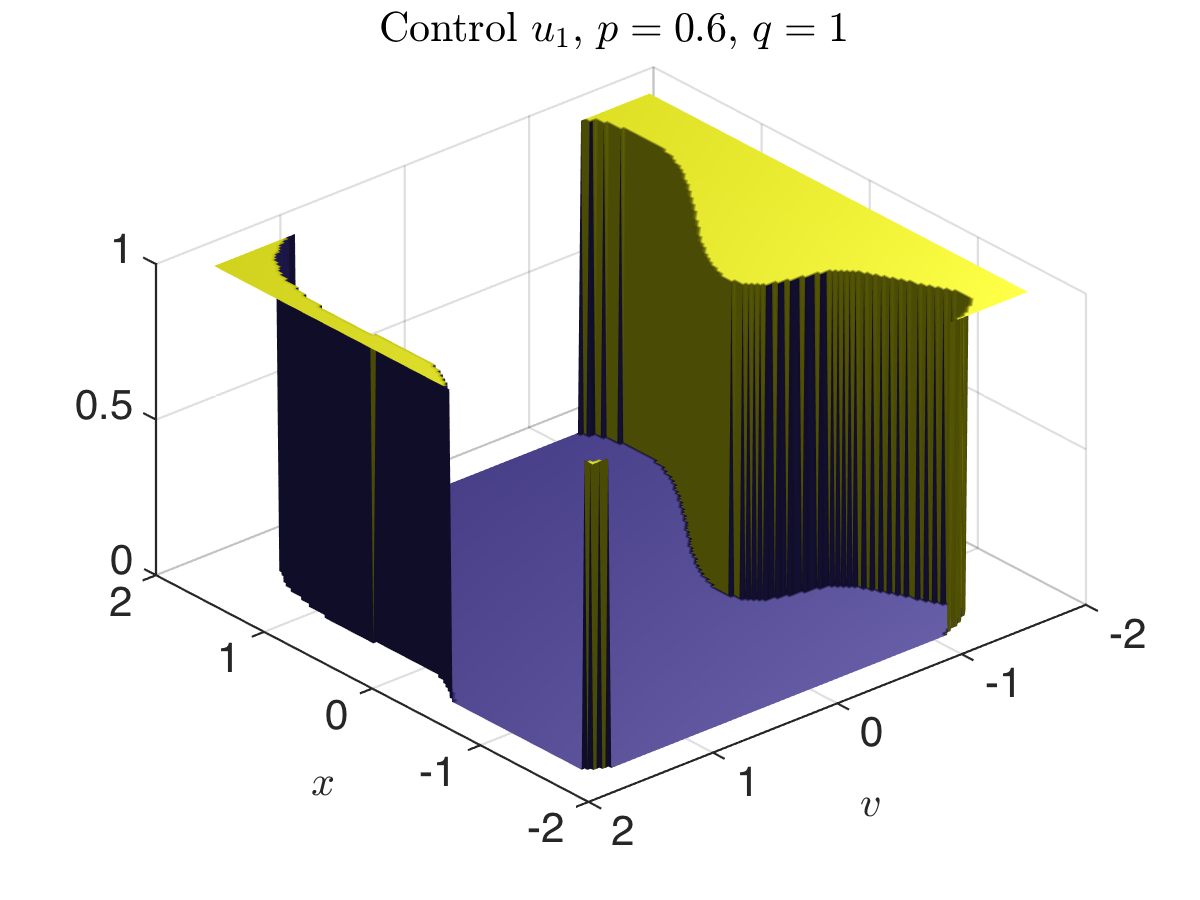}}\hfill
	\subfloat[$u_1, p=0.2, q=1$]{\label{fig:t2c}\includegraphics[width=0.33\textwidth]{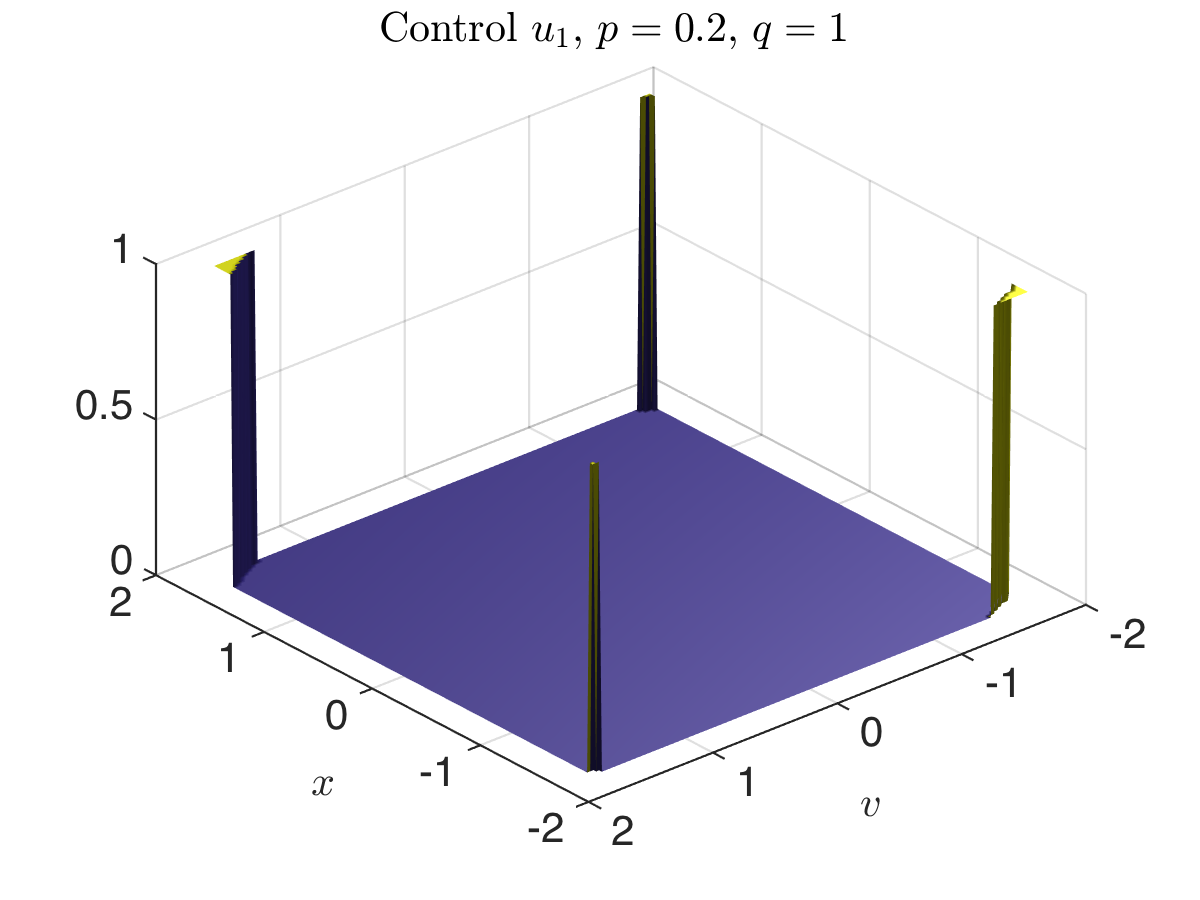}}\\		
	\subfloat[$u_2, p=1, q=1$]{\label{fig:t2d}\includegraphics[width=0.33\textwidth]{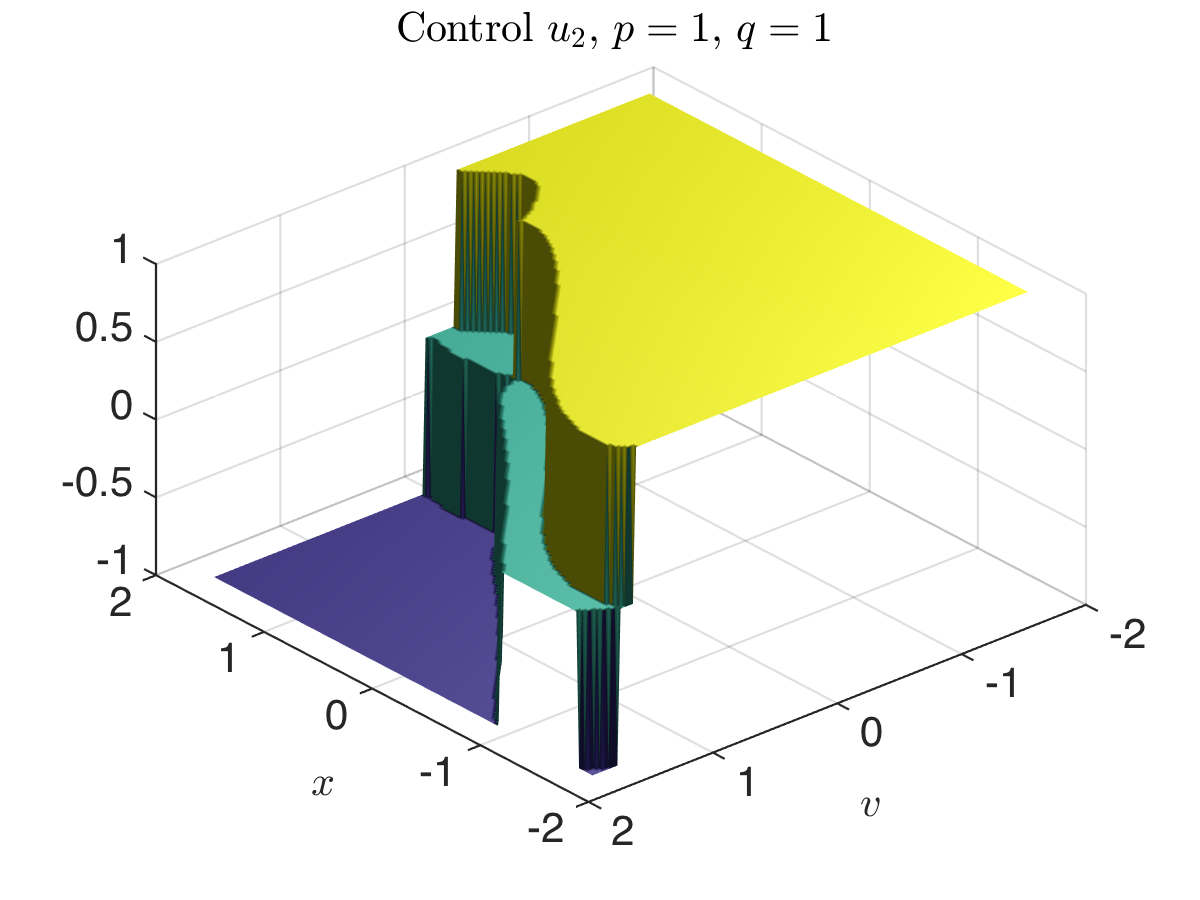}}\hfill
	\subfloat[$u_2, p=0.6, q=1$]{\label{fig:t2e}\includegraphics[width=0.33\textwidth]{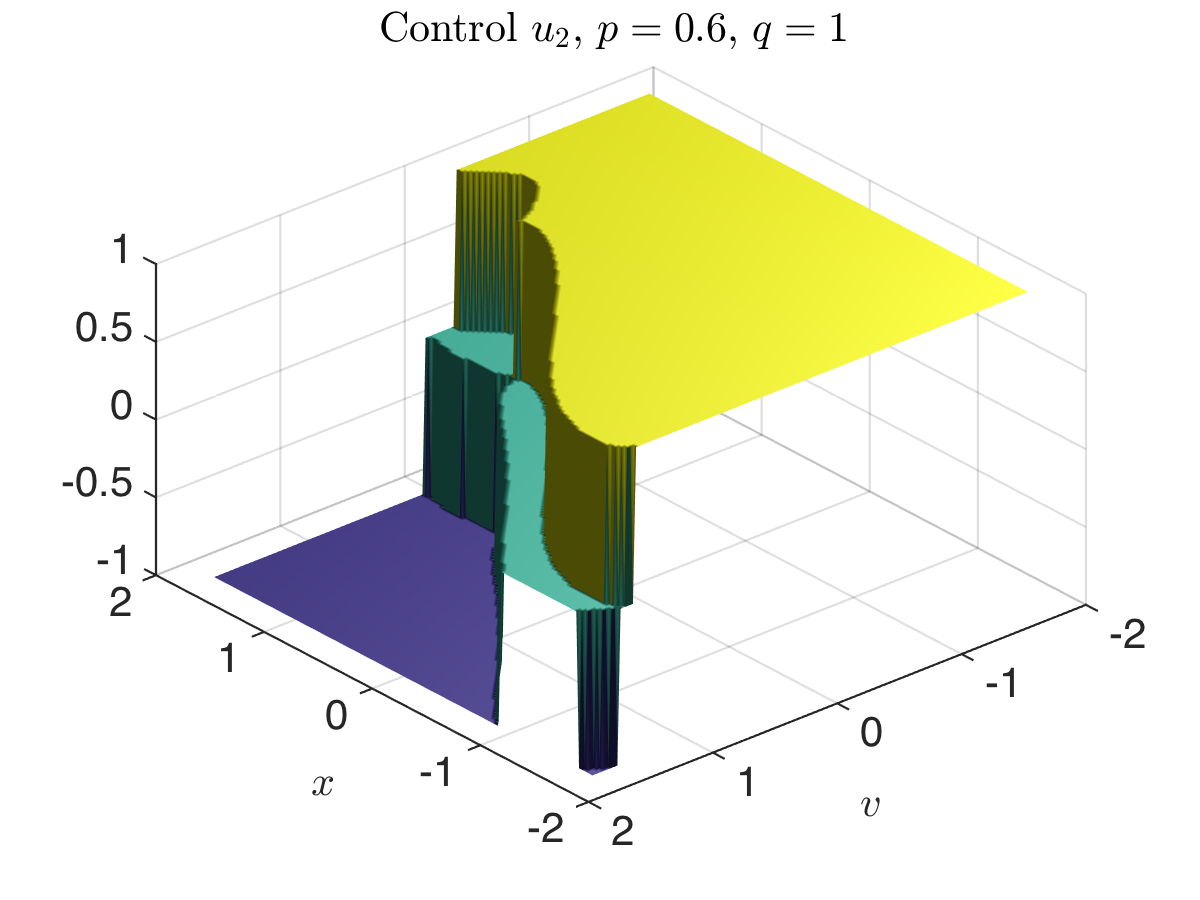}}\hfill
	\subfloat[$u_2, p=0.2, q=1$]{\label{fig:t2f}\includegraphics[width=0.33\textwidth]{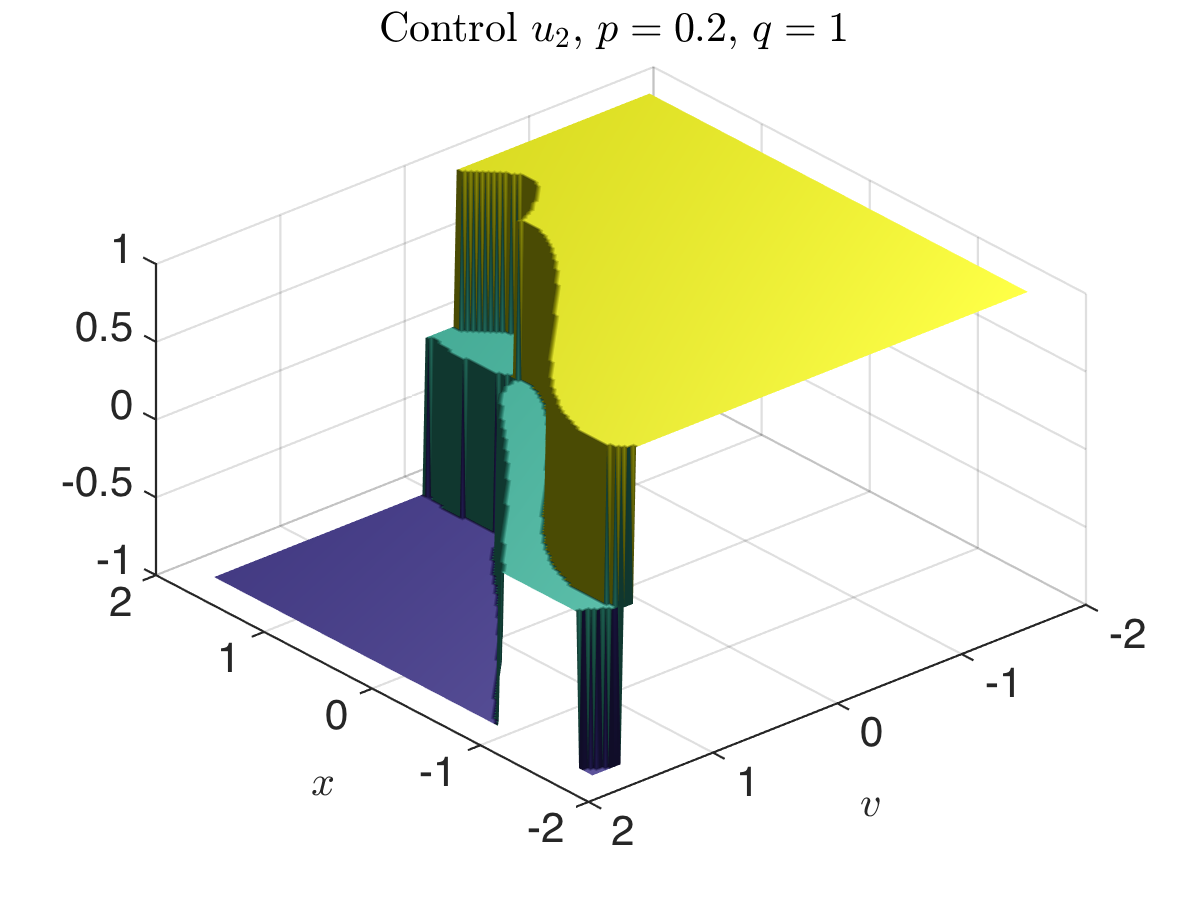}}\\
	\subfloat[$\|u\|_0, p=1, q=1$]{\label{fig:t2g}\includegraphics[width=0.33\textwidth]{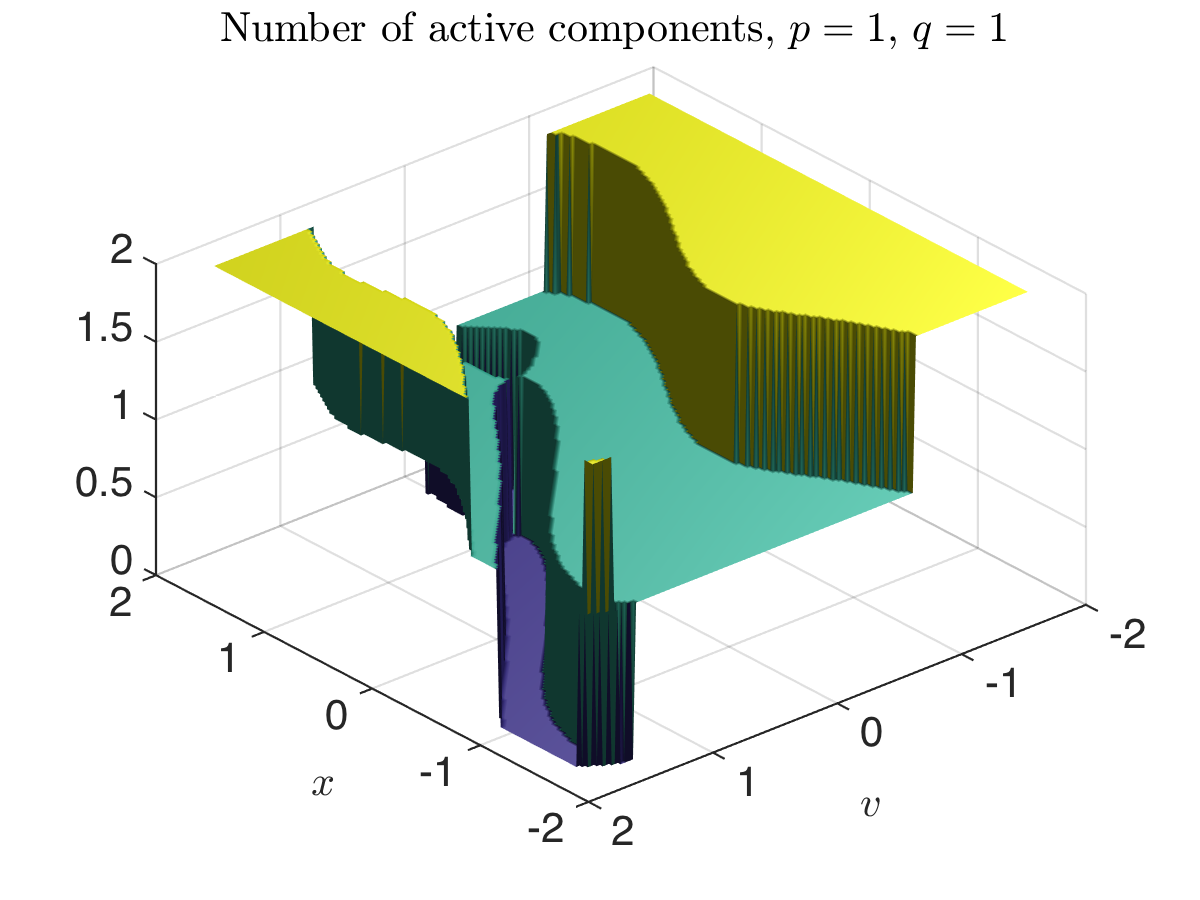}}\hfill
	\subfloat[$\|u\|_0, p=0.6, q=1$]{\label{fig:t2h}\includegraphics[width=0.33\textwidth]{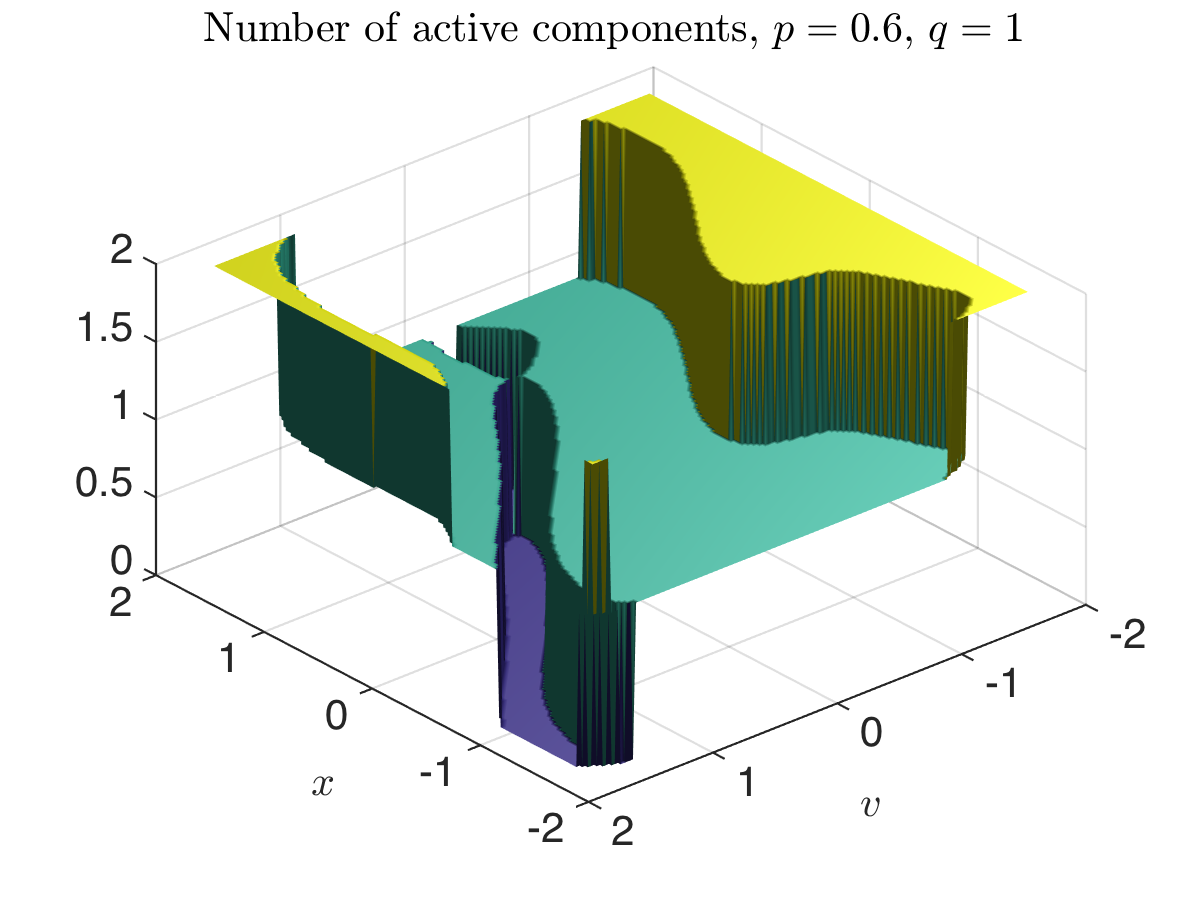}}\hfill
	\subfloat[$\|u\|_0, p=0.2, q=1$]{\label{fig:t2i}\includegraphics[width=0.33\textwidth]{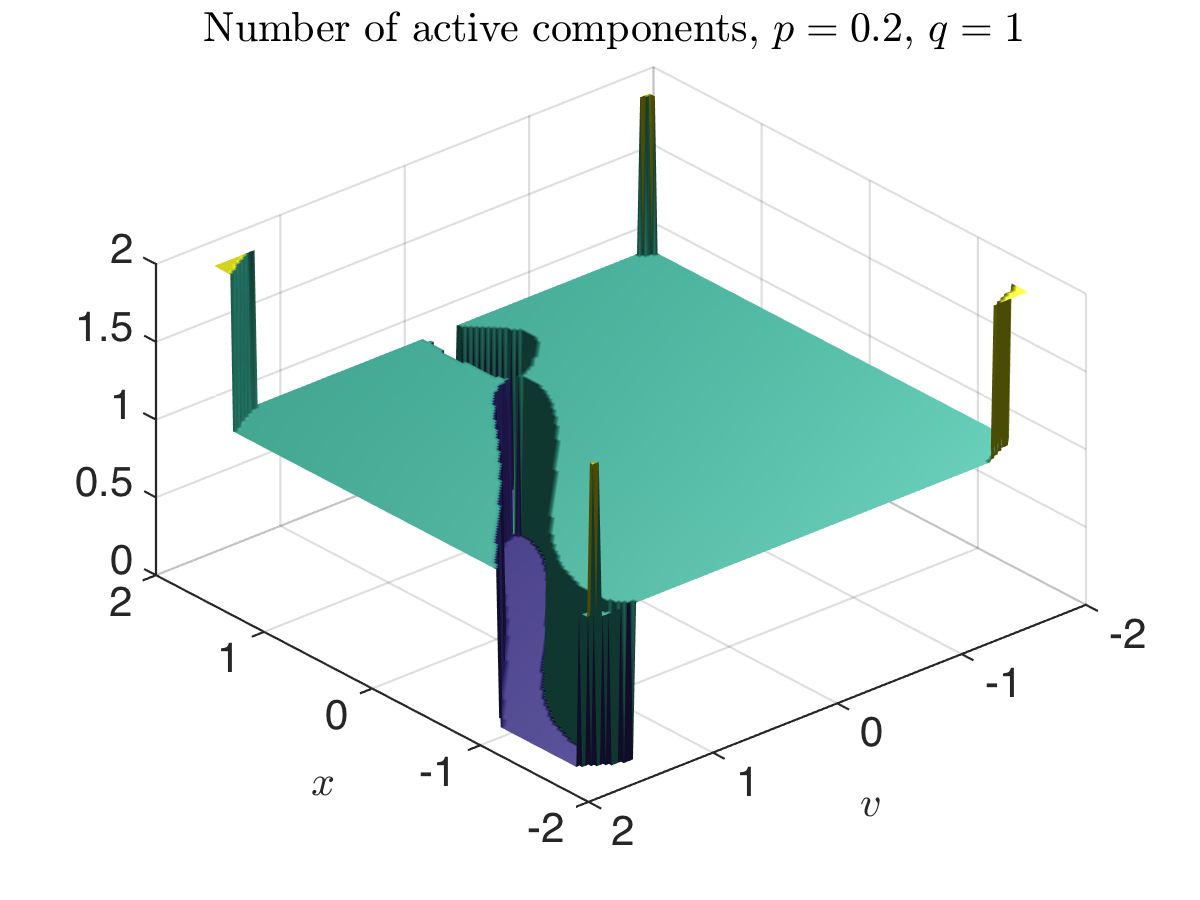}}
	\caption{Optimal controls for the double-well nonlinear control problem. The first two rows show the control variables $u_1$ and $u_2$ for different values of $p$ and $q$.}\label{dw1}
\end{figure}

\noindent We observe:

\begin{itemize}
	\item [a,b,c)] By reducing the value of $p$  with $q=1$, the region where the control $u_1$ is active decreases.
	\item [d,e,f)] Reducing $q$ does not affect the sparsity pattern of $u_2$. The linear control action via $u_2$ is more relevant for the stabilization goal than the bilinear control term $u_1v$. As expected it becomes insignificant as $v$ becomes small.
	\item[g,h,i)] Overall, the reduction of $p$ has  a significant effect on the increase of the switching region.
\end{itemize}

\noindent In order to investigate a setting with a richer interplay between the control variables and the switching structure, we consider a modified version of the double-well control system given by
\begin{align*}
\dot x(s)&=v(s)\\
\dot v(s)&= -(1+u_1(s))v(s)+(x(s)-x^3(s)) + u_2(s)x(s),
\end{align*}
where $u_2$ enters now in a bilinear fashion. The optimal controllers are significantly different compared to the previous setting, as shown in  Figure \ref{dw2}.

\begin{figure}[!ht]
	\subfloat[$u_1 ,p=1, q=1$]{\label{fig:t3a}\includegraphics[width=0.33\textwidth]{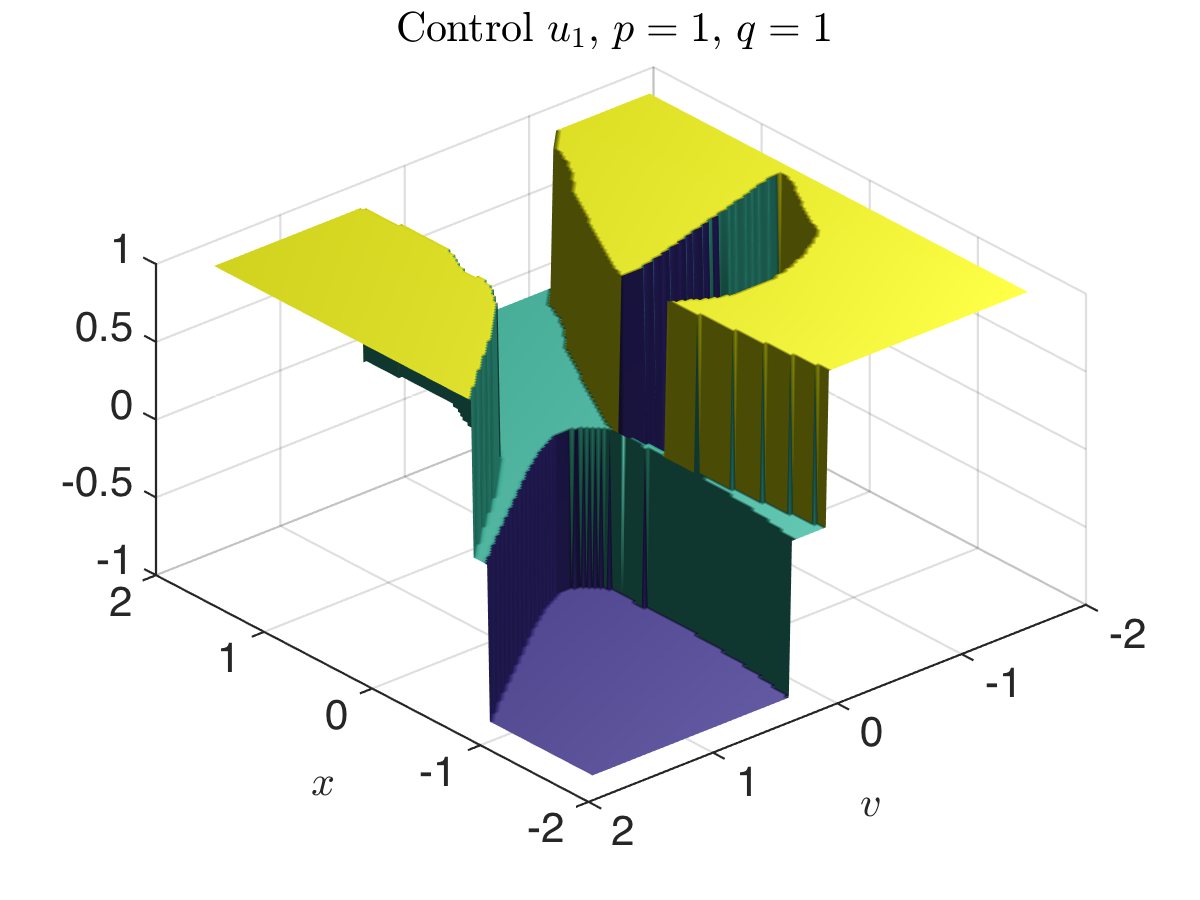}}\hfill
	\subfloat[$u_1 ,p=0.2, q=0.6$]{\label{fig:t3b}\includegraphics[width=0.33\textwidth]{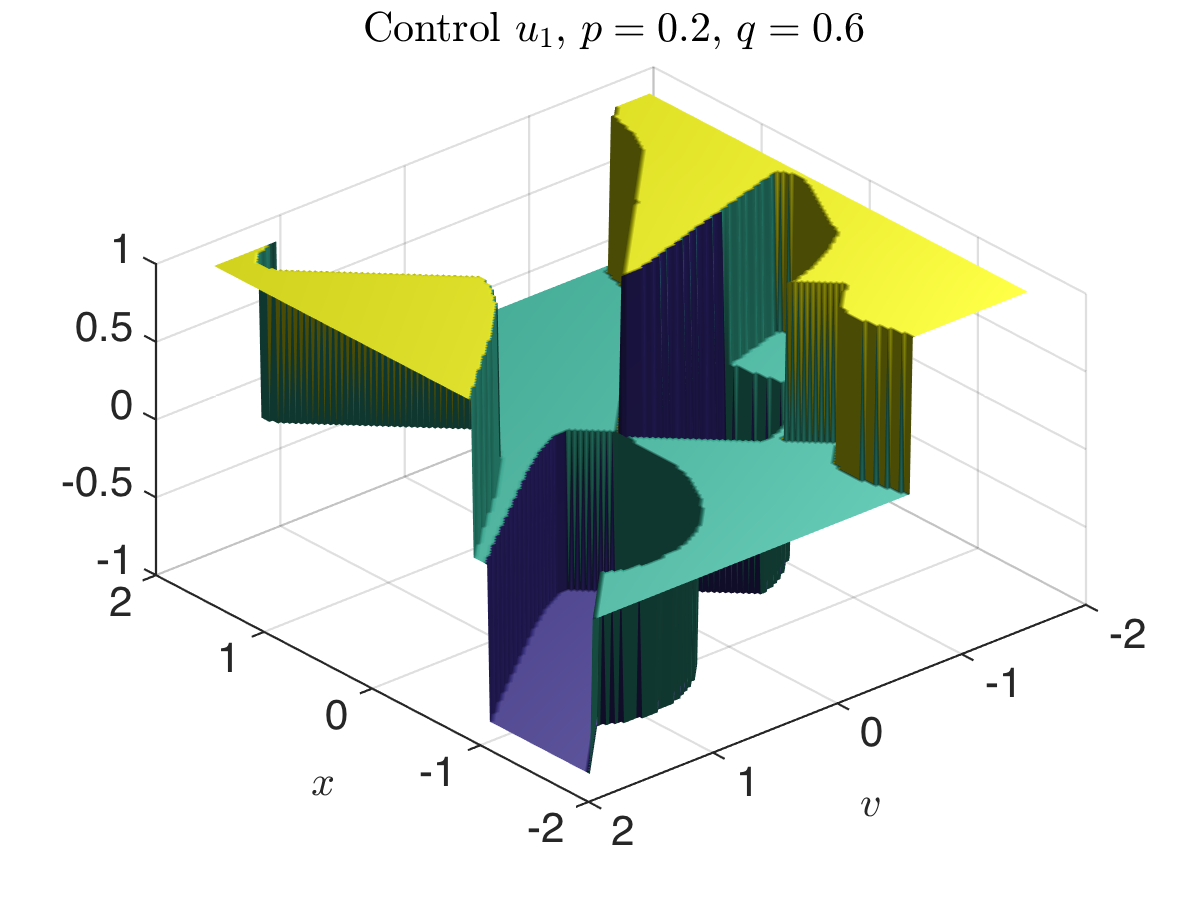}}\hfill
	\subfloat[$u_1 ,p=0.2, q=1$]{\label{fig:t3c}\includegraphics[width=0.33\textwidth]{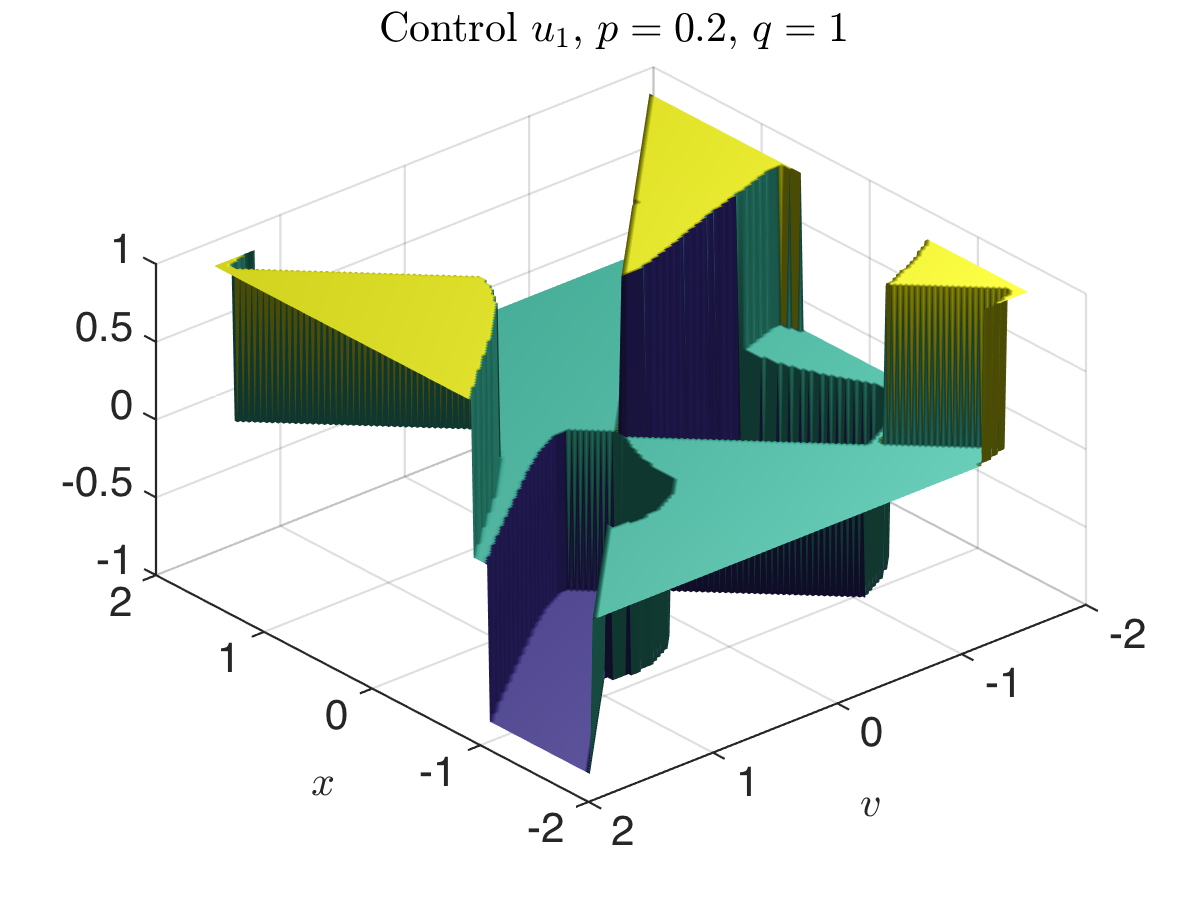}}\\		
	\subfloat[$u_2,p=1, q=1$]{\label{fig:t3d}\includegraphics[width=0.33\textwidth]{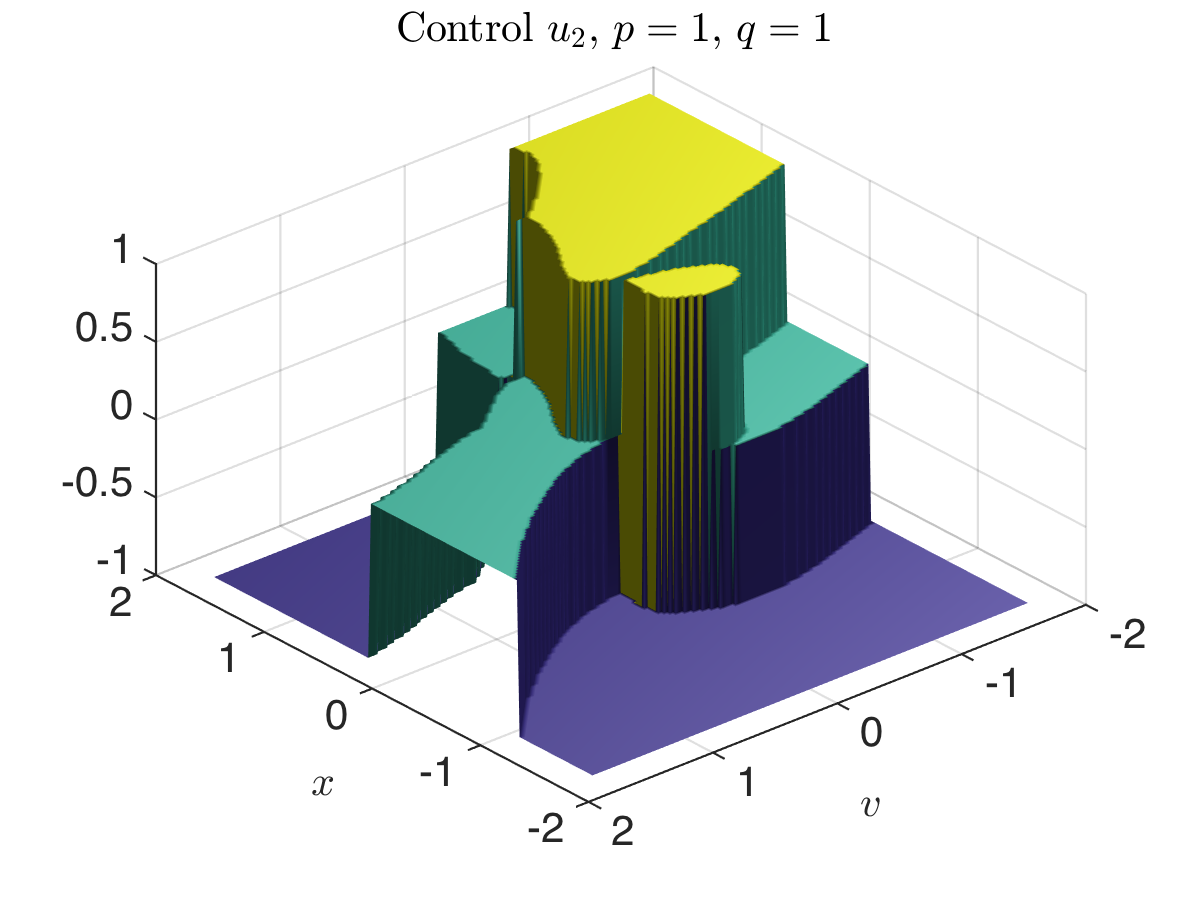}}\hfill
	\subfloat[$u_2 ,p=0.2, q=0.6$]{\label{fig:t3e}\includegraphics[width=0.33\textwidth]{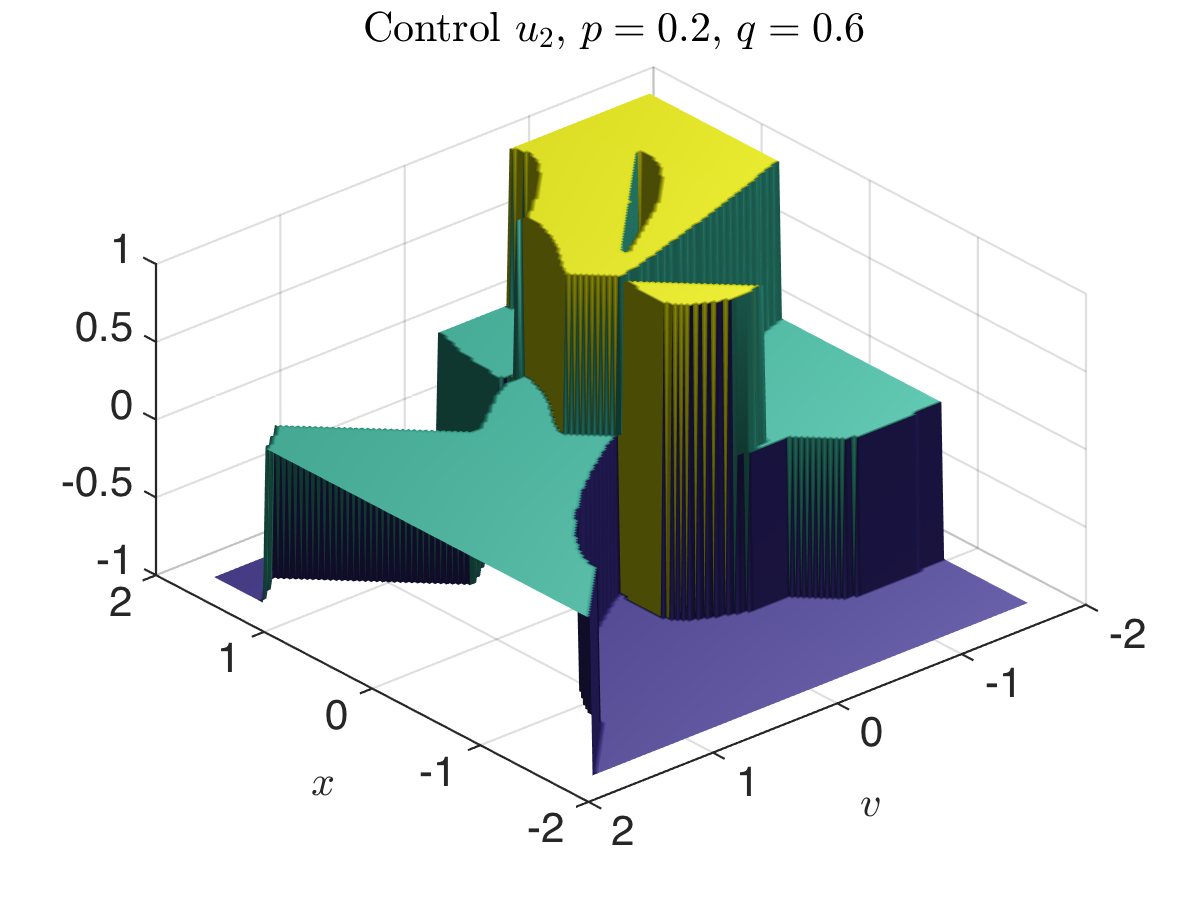}}\hfill
	\subfloat[$u_2 ,p=0.2, q=1$]{\label{fig:t3f}\includegraphics[width=0.33\textwidth]{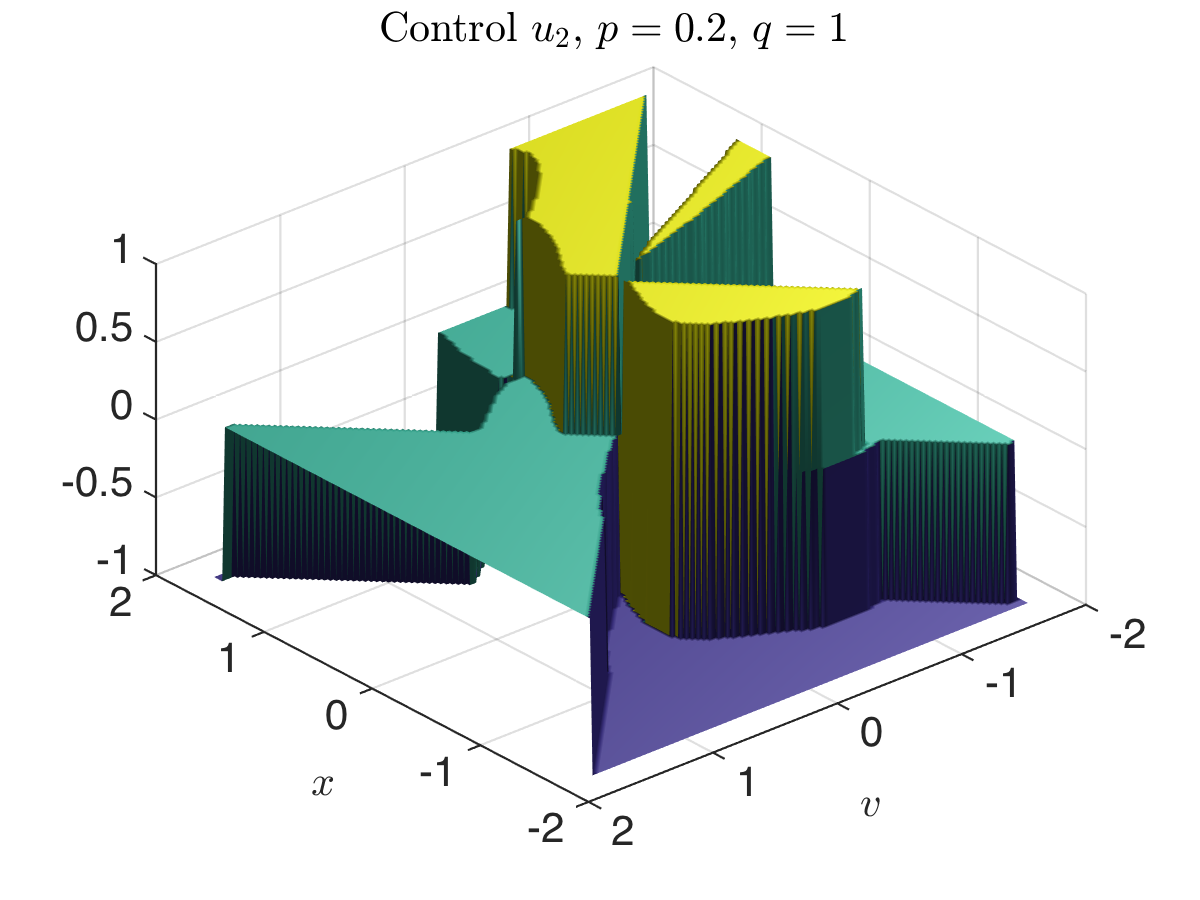}}\\
	\subfloat[$\|u\|_0 ,p=1, q=1$]{\label{fig:t3g}\includegraphics[width=0.33\textwidth]{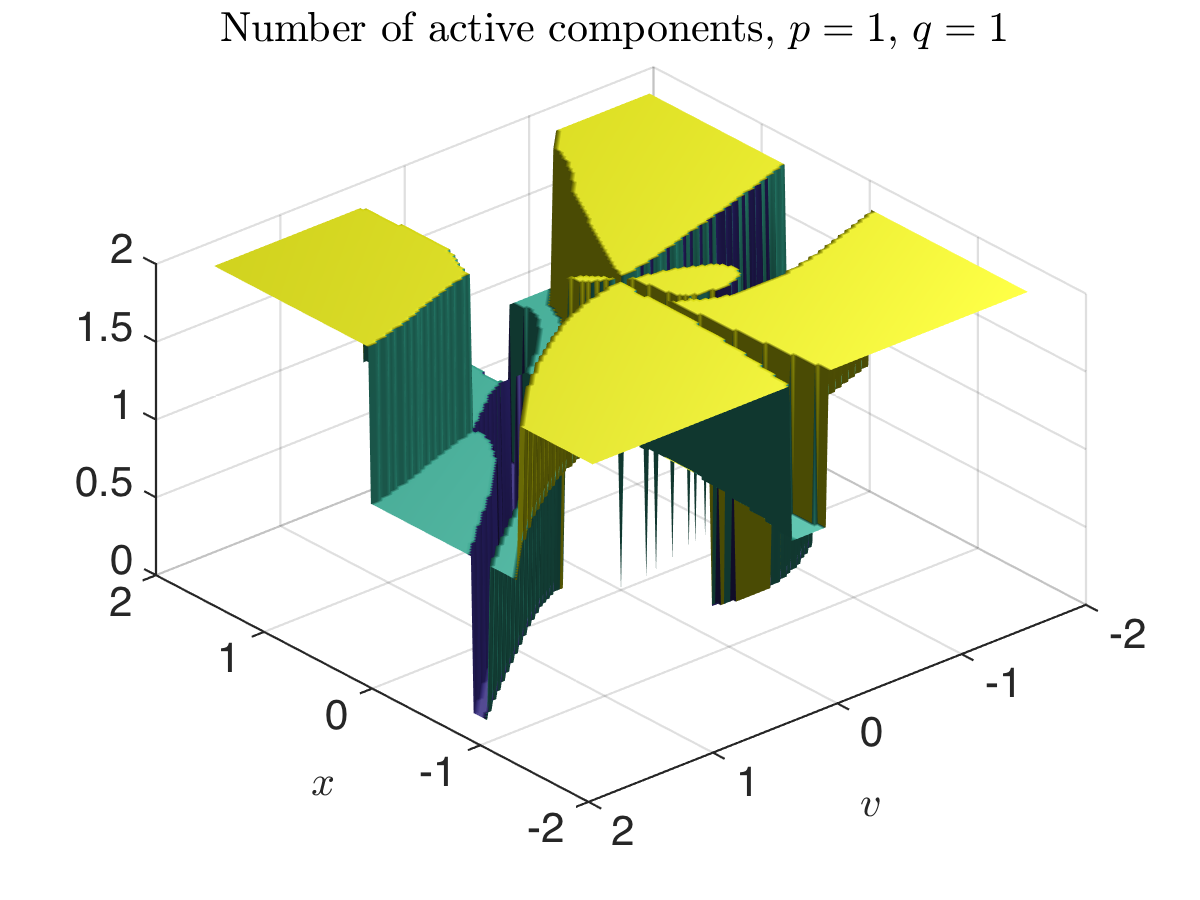}}\hfill
	\subfloat[$\|u\|_0 ,p=0.2, q=0.6$]{\label{fig:t3h}\includegraphics[width=0.33\textwidth]{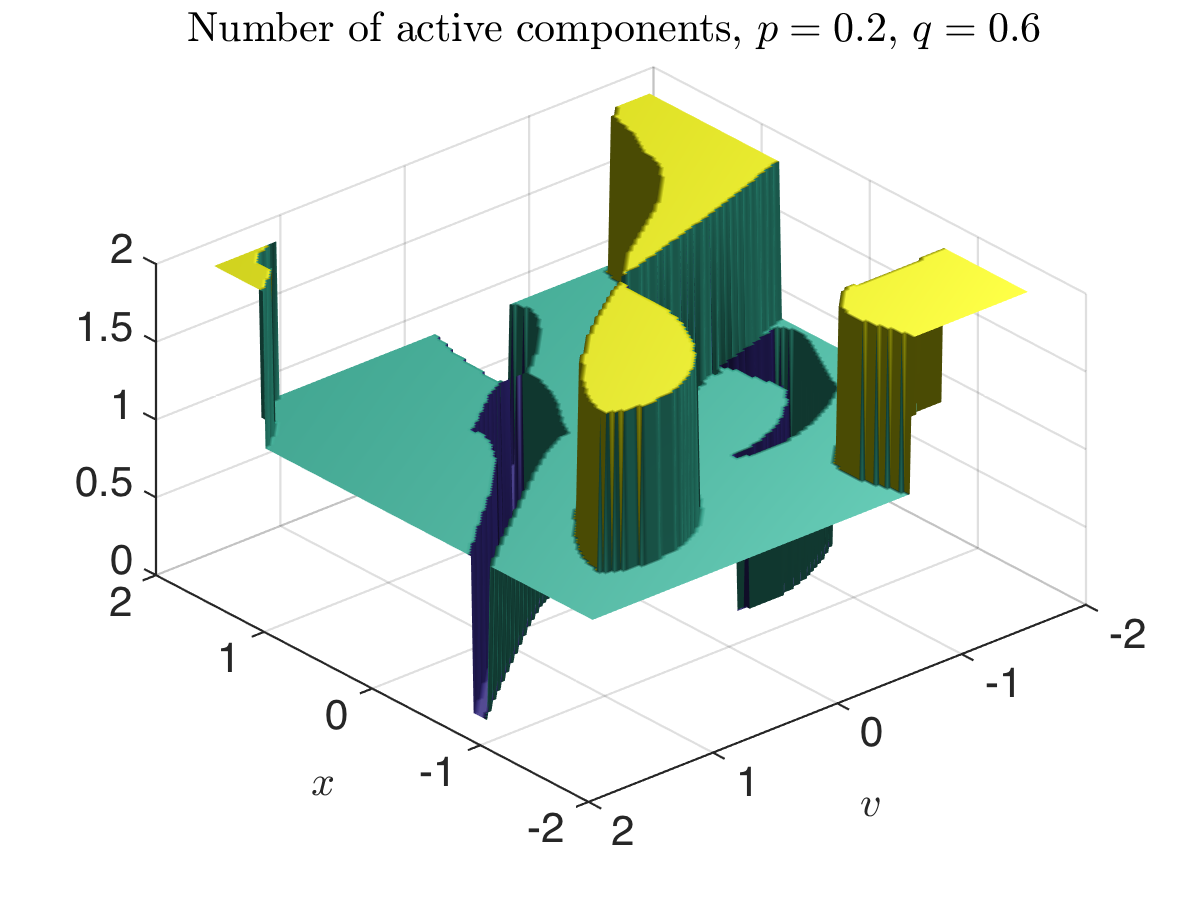}}\hfill
	\subfloat[$\|u\|_0  ,p=0.2, q=1$]{\label{fig:t3i}\includegraphics[width=0.33\textwidth]{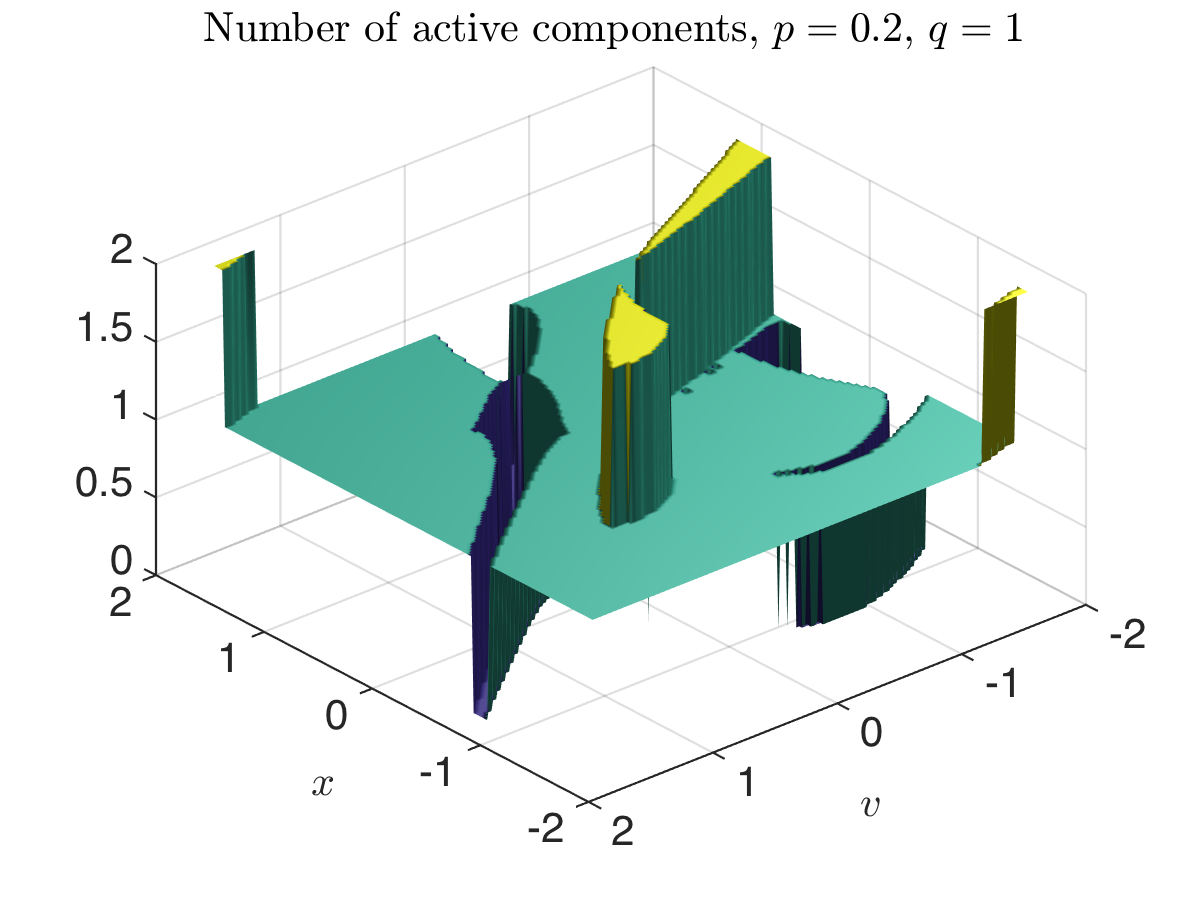}}
	\caption{Optimal controls for the double-well control problem with two bilinear controls. The first two rows show the control variables $u_1$ and $u_2$ for different values of $p$ and $q$.}\label{dw2}
\end{figure}
\noindent We note that:
\begin{itemize}
	\item [a,b,c)] The sparsity region of $u_1$ increases as the ratio $q/p$ increases.
	\item [d,e, f)] The sparsity region of $u_2$ also increases as $q/p$ increases.
	\item [g,h,i)] Overall, the switching pattern of the two control variables becomes dominant as the ratio $q/p$ becomes large. Only a reduced region of the state space requires the simultaneous action of two control variables.
\end{itemize}

\paragraph{Concluding remarks.} In this paper we have studied infinite horizon optimal control problems with a control cost of the form  $\| u \|_p^q$, where $0<p\leq q\leq 1$, leading to a non-convex, non-smooth optimization problem. From the analysis of the associated optimality conditions, we have shown that such control penalizations induce not only sparsity, but also a switching structure in the optimal control field. The switching pattern is determined by the different parameters of the control problem, but most notably, by the value of $q$ and the ratio $q/p$. By means of dynamic programming techniques, we have shown numerically that, for an increased $q/p$ ratio the optimal control has a dominant switching pattern, tending to minimize a counting $\|\cdot\|_0$ measure over an enlarged region of the state space. We believe that an important direction for future research is a thorough study of the interplay between the underlying dynamical structure of the control system and the switching pattern. More concretely, it would be desirable to know whether the sparse/switching control does benefit from the basin of attraction of a given equilibrium point, or whether the inclusion of $\|\cdot\|_p^q$ norms could lead to minimum time-type controllers.

\end{document}